\documentclass[a4paper,10pt,reqno]{amsart}
\usepackage[english,activeacute]{babel}
\usepackage[utf8]{inputenc}
\usepackage{amsmath,amssymb,amsfonts,amsthm,amscd}
\usepackage[mathscr]{eucal}
\usepackage{hyperref}

\usepackage{enumitem}
\setlist[itemize]{topsep=0.2em, itemsep=0.2em, leftmargin=2em}
\setlist[enumerate]{topsep=0.2em, itemsep=0.2em, leftmargin=2em}
\setlist[description]{topsep=0.2em, itemsep=0.2em, leftmargin=2em}

\usepackage{latexsym}

\usepackage{graphicx}
\usepackage{color}
\usepackage{graphicx}

\newcommand{\N}{\mathbb{N}}
\newcommand{\Z}{\mathbb{Z}}

\newcommand{\R}{\mathbb{R}}

\newcommand{\df}{\mathrm{d}}

\newcommand{\E}{\mathbb{E}}

\newcommand{\X}{\mathfrak{X}}

\newcommand{\Length}{\mathop{\rm Length}\nolimits}

\newcommand{\Flux}{\mathrm{Flux}}

\newcommand{\sm}{-}

\newtheorem{theorem}{Theorem}[section]
\newtheorem{proposition}[theorem]{Proposition}
\newtheorem{corollary}[theorem]{Corollary}
\newtheorem{lemma}[theorem]{Lemma}

\theoremstyle{definition}
\newtheorem{definition}[theorem]{Definition}

\theoremstyle{remark}
\newtheorem{remark}[theorem]{Remark}

\numberwithin{equation}{section}
\setlength{\parindent}{1em}
\linespread{1.1}

\title{The Jenkins--Serrin problem in 3-manifolds with a Killing vector field}
\date{}

\author{Andrea Del Prete}
\address{Dipartimento di Ingegneria e Scienze dell'Informazione e Matematica\\
	Universit\`{a} dell'Aquila\\
	Via Vetoio Loc. Coppito -- 67100 L'Aquila (Italy)}
\email{andrea.delprete@graduate.univaq.it}

\author{Jos\'{e} M. Manzano}
\address{Departamento de Matemáticas\\Universidad de Ja\'{e}n\\Campus Las Lagunillas -- 23071 Jaén (Spain)}
\email{jmprego@ujaen.es}

\author{Barbara Nelli}
\address{Dipartimento di Ingegneria e Scienze dell'Informazione e Matematica\\
	Universit\`{a} dell'Aquila\\
	Via Vetoio Loc. Coppito -- 67100 L'Aquila (Italy)}
\email{nelli@univaq.it}

\subjclass[2020]{Primary 53A10; Secondary 53C30}

\keywords{Minimal surfaces, Jenkins--Serrin problem, Killing submersions}

\selectlanguage{english}

\begin{document}
	
\begin{abstract}
We consider a Riemannian submersion from a 3-manifold $\E$ to a surface $M$, both connected and orientable, whose fibers are the integral curves of a Killing vector field without zeros, not necessarily unitary. We solve the Jenkins--Serrin problem for the minimal surface equation in $\mathbb{E}$ over a relatively compact open domain $\Omega\subset M$ with prescribed finite or infinite values on some arcs of the boundary under the only assumption that the same value $+\infty$ or $-\infty$ cannot be prescribed on two adjacent components of $\partial\Omega$ forming a convex angle. The domain $\Omega$ can have reentrant corners as well as closed curves in its boundary. We show that the solution exists if and only if some generalized Jenkins--Serrin conditions (in terms of a conformal metric in $M$) are fulfilled. We develop further the theory of divergence lines to study the convergence of a sequence of minimal graphs. We also provide maximum principles that guarantee the uniqueness of the solution. Finally, we obtain new examples of minimal surfaces in $\mathbb{R}^3$ and in other homogeneous $3$-manifolds.
\end{abstract}

\maketitle
	
\section{Introduction}
	
We aim at proving a generalization to Killing submersions of the so called Jenkins--Serrin Theorem over relatively compact domains. This problem was firstly treated in Euclidean space $\R^3$ by Jenkins and Serrin~\cite[Thm.~3 and~4]{JS}, who considered bounded domains $\Omega\subset\R^2$ with $\partial\Omega$ composed of straight segments and convex arcs. They found necessary and sufficient elementary conditions on the lengths of the sides of polygons inscribed in $\Omega$ (see Definition~\ref{def:mu-polygon}) that guarantee the existence of a minimal graph in $\R^3$ over $\Omega$ with prescribed values on the regular components of $\partial\Omega$, as well as its uniqueness (possibly up to vertical translations). They incorporated into the classical Dirichlet problem the possible asymptotic infinite values on some straight components of $\partial\Omega$. Over the years, analogous results over bounded domains have been proven in other Riemannian 3-manifolds: in $\mathbb H^2\times\R$ by the third author and Rosenberg~\cite{NR,NR-err}; in $M\times\R$ by Pinheiro~\cite{Pin} (geodesically convex domains), Mazet, Rodriguez and Rosenberg~\cite{MRR} (general case), and Eichmair and Metzger~\cite{EM} (under milder assumptions and also allowing closed geodesics as part of the boundary); in $\widetilde{\mathrm{PSL}}_2(\R)$ by Younes~\cite{Younes}; and in $\mathrm{Sol}_3$ by Nguyen~\cite{Ng}. All these problems can be treated together by noticing that they deal with surfaces transverse to a Killing vector field. There are really few approaches to Dirichlet type problems with respect to non-Killing directions, e.g. see the solution to the Radó problem in homogeneous semidirect products in~\cite{MMP}. There are also many works on the Jenkins--Serrin problem for positive constant mean curvature graphs (starting with the work of Spruck~\cite{Spruck}, see also~\cite{EM} and the references therein) as well as for graphs over unbounded domains in $M\times\R$, being $M$ a Hadamard surface, and $\widetilde{\mathrm{PSL}}_2(\R)$ (starting with the work of Collin and Rosenberg in $\mathbb{H}^2\times\mathbb{R}$~\cite{CR}).

A Killing submersion is a Riemannian submersion $\pi:\mathbb{E}\to M$ whose fibers are the integral curves of a (not necessarily unitary) Killing vector field $\xi$ that never vanishes. These geometric structures describe locally any $3$-manifold endowed with a Killing vector field without zeros and provide a global framework for all homogeneous $3$-manifolds (among many other ambient spaces). Killing submersions were completely classified by the second author and Lerma~\cite{LerMan} in terms of the base surface $M$ and two geometric functions on $M$, namely the bundle curvature $\tau$ and the norm of the Killing vector field $\mu$. The triplet $(M,\tau,\mu)$ can be prescribed arbitrarily and encodes the geometry and topology of the submersion, which can be recovered uniquely if $\E$ is assumed simply connected~\cite[Thm.~2.6 and~2.9]{LerMan}.

If $\tau\equiv 0$, then $\E$ is isometric to the warped product $M\times_\mu\R$ (indeed a Riemannian product if $\mu\equiv 1$) where there is a natural notion of graph. If $\tau\not\equiv 0$, although the horizontal distribution orthogonal to $\xi$ is not integrable, we can still define a Killing graph in $\E$ as a section of the submersion everywhere transverse to the fibers of $\pi$. After prescribing a zero section at will, a graph $\Sigma\subset\E$ can be identified with a function $u\in\mathcal C^\infty(\Omega)$, where $\Omega=\pi(\Sigma)\subset M$. The fact that $\xi$ is Killing leads naturally to a divergence type expression for the mean curvature of $\Sigma$ involving the generalized gradient $Gu$. This operator $Gu$ outputs a vector field in $\Omega$ only depending on $\Sigma$, not on the choice of the zero section~\cite[Lemma~3.1]{LerMan} (see also Lemma~\ref{lemma:H} below). Killing graphs have played a central role in the theory of minimal and constant mean curvature surfaces, attracting the attention of many authors, see for instance the general solutions to the Dirichlet problem given by Dacjzer, De Lira and Ripoll~\cite{DD,DDR}, as well as other classification results such as the work of Alías, Dacjzer and Ripoll~\cite{ADR}. We have highlighted these papers since they treat the general case, yet in specific Killing submersions such as product spaces or homogeneous spaces, there is a vast literature on Killing graphs.

In a Killing submersion, a minimal vertical cylinder (i.e., a surface everywhere tangent to $\xi$) projects to a curve in $M$ which is geodesic for the conformal metric $\mu^2\df s_M^2$ in $M$, see Proposition~\ref{prop:H-invariant}. Every surface invariant by a continuous $1$-parameter group of isometries in any $3$-manifold is locally a vertical cylinder for some Killing submersion, so this gives a geometric characterization of the curves in the orbit space that generate any invariant surface with constant mean curvature in any $3$-manifold with a Killing vector field. In particular, such curves are characterized by their initial data, see Corollary~\ref{coro:invariant-H-surfaces}. 

This viewpoint also reveals the existence of minimal open book foliations of a neighborhood of any vertical fiber of any Killing submersion (with binding the fiber), see Corollary~\ref{coro:open-book-decomposition}. Furthermore, we shall see that the fact that minimal vertical cylinders project to $\mu$-geodesics (we will use $\mu$ as a prefix to indicate that we are using the conformal metric $\mu^2\df s_M^2$) implies that infinite values in the Jenkins--Serrin problem can only be prescribed along $\mu$-geodesic components of $\partial\Omega$ as in~\cite[Thm.~3.3]{RST}. Moreover, continuous finite boundary values can be prescribed on $\mu$-convex components of $\partial\Omega$, because $\mu$-convexity is the natural assumption to construct the usual barriers (see Theorem~\ref{thm:generalized-existence} and Proposition~\ref{prop:boundary-value}). It is important to remark that $\mu$-convexity will be always understood as \emph{not strict}, in the sense that a $\mu$-geodesic is also considered to be $\mu$-convex.

In spite of the very diverse behaviors of Killing submersions, our main result (Theorem~\ref{thm:JS}) shows that the necessary and sufficient conditions for the existence of a minimal graph over $\Omega$ are the very same as in the original Jenkins--Serrin result (using the $\mu$-metric of $M$ in the computation of lengths. It is quite satisfactory to realize that the original statement in $\R^3$ still applies with minor changes in this very general setting, but indeed our approach needs less assumptions. There are typically two conditions on a Jenkins--Serrin problem:
\begin{description}
	\item[(C1)] The value $+\infty$ or $-\infty$ is not assigned to two adjacent components of $\partial\Omega$ that meet at a convex corner.
	\item[(C2)] If no continuous finite values are assigned, then the subsets of $\partial\Omega$ where $+\infty$ and $-\infty$ are assigned are both disconnected.
\end{description}
Condition (C1) is necessary for the existence of solutions  as we shall prove in Proposition~\ref{prop:admissibility}. Condition (C2) was used in Jenkins and Serrin's original argument and has been required in the case of $M\times\R$ in~\cite{Pin} or~\cite{MRR} (but not in~\cite{EM}). Note that (C2) is automatically satisfied in $\R^3$, $\mathbb{H}^2\times\mathbb{R}$, $\widetilde{\mathrm{PSL}}_2(\R)$ or $\mathrm{Sol}_3$, but it discards some configurations when the $\mu$-metric has positive Gauss curvature as in the case of $\mathbb{S}^2\times\R$ or $\mathbb{S}^3$ (that fibers over $\mathbb{S}^2$ via the Hopf fibration and the $\mu$-metric is round). Indeed, some symmetric configurations in $\mathbb{S}^2\times\R$ show that (C2) is not strictly necessary, as pointed out in~\cite[Rmk.~3.5]{MRR}. We give a counterexample in Section~\ref{sec:examples} (see Figure~\ref{fig:counterexample}) that reveals a mistake in the proof of existence in~\cite{MRR} (in the general case of $M\times\R$). This example suggests that condition (C2) cannot be dropped if one uses Jenkins and Serrin's approach.

Consequently, we have decided to use and extend the theory of divergence lines introduced by Mazet~\cite{Mazet} and developed in~\cite{MRR} in $\mathbb{H}^2\times\mathbb{R}$ (see Section~\ref{sec:divergence-lines}). We will not prove the results of~\cite{MRR} that literally extend to Killing submersions, but we will need a bunch of new arguments to deal with situations that cannot arise in $\mathbb{H}^2\times\R$. 

Besides simplifying some arguments in some of the cited papers, the present work makes several contributions that is worth highlighting:
\begin{enumerate}
	\item We prove that there are minimal graphs over any relatively compact domain (Lemma~\ref{lemma:minimal-sections}). These graphs act as zero sections and come in handy in the Perron process to solve the Dirichlet problem with finite boundary values (Perron is needed as long as we allow reentrant corners in the domain $\Omega$).
	\item Douglas criterion is commonly used to obtain a family of minimal annuli in the construction of Scherk barriers (as in~\cite{NR}), but this is not possible in a general Killing submersion since minimal vertical cylinders are not necessarily area-minimizing. We use the Meeks-Yau solution of the Plateau problem to get minimal disks instead of annuli (see Proposition~\ref{prop:scherk}).

	\item Contrary to the case of $\mathbb{H}^2\times\mathbb{R}$, divergence lines might accummulate on $\overline\Omega$, but we will prove that they are actually properly embedded (Lemma~\ref{lem:divline-properly-embedded}). We also need to provide a new argument to prove that no divergence line ends at the interior of a boundary component (Lemma~\ref{lem:divline-boundary-values}) because~\cite{MRR} uses the symmetries of $\mathbb{H}^2\times\mathbb{R}$, which are not available in a Killing submersion.
	\item We have to deal with the fact that there can be uncountably many divergence lines (again contrary to the case of $\mathbb{H}^2\times\R$ in which this number is finite). We show that, up to a subsequence, they are disjoint and belong to finitely many nonempty isotopy classes (which can be understood rather well separately) and define different \emph{divergence heights} (Proposition~\ref{prop:disjoint-lines}). This settles a comment in~\cite[Rmk.~4.5]{MRR} and reveals that the number of relevant inscribed $\mu$-polygons and convergence components is actually finite (Corollary~\ref{coro:components-are-polygons}).
\end{enumerate}
We must point out that most of the ideas developed in Section~\ref{subsec:divlines} also apply (or can be adapted) to very general bounded or unbounded domains which are not of Jenkins--Serrin type. For instance, Lee and the first two authors~\cite[\S4]{DLM} use divergence lines in unbounded domains of Killing submersions where the ambient geometry is not even bounded. The proof of our Jenkins--Serrin theorem also applies to cases more general than those explicitly stated. We will give examples of graphs over immersed domains and examples in Killing submersions with fibers of finite length, see Section~\ref{sec:examples}. We will also produce new examples of minimal surfaces with boundary in $\R^3$ which are Jenkins--Serrin graphs with respect to rotations and accumulate on catenoids and planes (which are the vertical cylinders in this setting), as well as a complete Scherk type surface in $\mathrm{Nil}_3$ which is neither embedded nor proper (by the effect of the holonomy).

The contents of this manuscript are organized as follows. In Section~\ref{sec:killing}, we give local models for Killing submersions and prove some generalities for graphs and vertical cylinders. In Section~\ref{sec:convergence}, we adapt some of the gradient and curvature estimates given by Rosenberg, Souam and Toubiana~\cite{RST} (as well as their Uniform Graph Lemma) to the case of Killing submersions, and provide some further estimates towards a Compactness Theorem and a Monotone Convergence Theorem. Section~\ref{sec:dirichlet} is devoted to prove the existence and uniqueness of solution for the Dirichlet problem with finite boundary values (the convex case will be proved using Meeks--Yau solution of the Plateau problem~\cite{MY1}, whereas the general case will be treated by the Perron process following Sa Earp and Toubiana~\cite{ST1,ST3}). The construction of local Scherk type surfaces will be discussed in Section~\ref{sec:local-barriers}, as well as how they are used to prove that a convergent sequence of graphs achieves the right boundary values. In Section~\ref{sec:JS} we state the Jenkins--Serrin Theorem and develop the flux arguments needed in the proof; in particular, we prove that the Jenkins--Serrin conditions are necessary and give a general maximum principle that implies the uniqueness of solutions. Section~\ref{sec:divergence-lines} recalls some results for divergence lines in~\cite{MRR} and proves the new ones needed for the existence of solutions. Although we assume that the domain is a Jenkins--Serrin domain, the Jenkins--Serrin conditions are not applied until Section~\ref{subsec:existence-JS}, where we finish the proof. Finally, some examples are discussed in Section~\ref{sec:examples}.
		
\medskip	
\noindent\textbf{Acknowledgement.} This work is part of the first author's PhD thesis and has been partially supported by PRIN-2010NNBZ78-009 and INdAM-GNSAGA. The second author was supported by the project PID2019.111531GA.I00 funded by MCIN/AEI/10.13039/501100011033 and by a FEDER-UJA project (Ref.\ 1380860). The second and third authors would like to thank Massimiliano Pontecorvo for his hospitality when the grounds for this work arose.

\section{Preliminaries on Killing submersions}\label{sec:killing}

Let $\pi:\E\to M$ be a Killing submersion from a Riemannian 3-manifold $\E$ to Riemannian surface $M$, both connected and oriented. This means that the fibers of $\pi$ are the integral curves of a Killing vector field $\xi\in\X(\E)$ without zeros. In this paper we will assume that the fibers of $\xi$ have infinite length unless differently specified, which is a natural assumption for the Jenkins--Serrin problem. In Section~\ref{sec:examples}, we will discuss how to deal with the case the fibers are compact.

We will consider two distinguished geometric functions, namely the Killing length $\mu=\|\xi\|\in\mathcal{C}^\infty(\E)$ and the bundle curvature $\tau\in\mathcal{C}^\infty(\E)$, given by 
\begin{equation}\label{BundleCurvature}
	\tau(q)=-\frac{1}{\mu(q)}\langle\overline\nabla_u\xi,v\rangle,
\end{equation}		 
where $\{u,v,\xi_q/\mu(q)\}$ is a positively oriented orthonormal basis of $T_q\E$ and $\overline\nabla$ denotes the Levi-Civita connection in $\E$. The elements of the $1$-parameter group of isometries $\{\phi_t\}_{t\in\R}$ associated to $\xi$ will be called \emph{vertical translations}. Vector fields in $\E$ parallel (resp.\ orthogonal) to $\xi$ are called \emph{vertical} (resp.\ \emph{horizontal}). Since $\tau$ and $\mu$ are invariant by vertical translations, they are constant along the fibers of $\pi$ and induce smooth functions in $M$, which will be also denoted by $\tau,\mu\in\mathcal{C}^\infty(M)$ by an abuse of notation.		

These functions characterize the geometry of the submersion in the sense that, if $M$ is a simply connected Riemannian surface and $\tau,\mu\in C^\infty(M)$ are such that $\mu>0$, then there is a Killing submersion $\pi:\E\to M$ with bundle curvature $\tau$ and Killing length $\mu$. This Killing submersion is unique if we assume that $\E$ is simply connected~\cite[Thms.~2.6 and~2.9]{LerMan}. Furthermore, if $\E$ or $M$ are not simply connected, then existence still holds true but uniqueness might fail. In that case, if $\rho\colon\widetilde{M}\to M$ and $\sigma\colon\widetilde{\E}\to\E$ are the universal Riemannian covering maps of $M$ and $\E$, respectively, then there exist a unique Killing submersion $\widetilde{\pi}\colon\widetilde{\E}\to\widetilde{M}$ such that $\rho\circ\widetilde{\pi}=\pi\circ\sigma$ and a group of Killing isometries $G$ acting properly discontinuously on $\widetilde{\E}$ such that $\widetilde{\E}=\E/G$. In other words, $\pi:\E\to M$ is the quotient of another Killing submersion with simply connected total space~\cite[Thm~2.14]{LerMan}.

\subsection{Existence of minimal sections}
The existence of a global smooth section of a Killing submersion $\pi:\E\to M$ is useful since it yields a trivialization of the bundle, in which case $\E$ is diffeomorphic to $M\times\R$. Under our assumption that fibers have infinite length, the existence of global sections is always guaranteed~\cite[Thm.~12.2]{Steenrod}. If fibers were compact, then the existence of global sections is characterized in~\cite[Prop.~3.3]{LerMan}. However, it is an open question to determine if a Killing submersion whose fibers have infinite length over a non-compact base admits a global \emph{minimal} section. Some entire minimal graphs in unit Killing submersions with base $M=\R^2$ have been obtained in~\cite{DLM}. Next lemma gives minimal sections that are large enough for our Jenkins--Serrin constructions.

\begin{lemma}\label{lemma:minimal-sections}
Let $\pi:\E\to M$ a Killing submersion whose fibers have infinite length. If $U\subset M$ is open and relatively compact, then there is a minimal section over $U$.
\end{lemma}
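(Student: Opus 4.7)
The plan is to produce an embedded minimal disk $\Sigma\subset\mathbb{E}$ whose projection covers a neighborhood of $\overline U$ and is everywhere transverse to the fibers, using the Meeks--Yau solution of the Plateau problem inside a mean convex region of $\mathbb{E}$ and then a vertical sliding argument to certify that $\Sigma$ is actually a graph.

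First I would enlarge $U$ to a relatively compact open set $V\subset M$ with $\overline U\subset V$ such that $V$ is a topological disk and $\partial V$ is a smooth, strictly $\mu$-convex Jordan curve. Such a $V$ can be obtained as a sublevel set, at a sufficiently high regular value, of a mollified $\mu$-distance function to $\overline U$; if the topology of $M$ prevents enclosing $\overline U$ in a disk, one first passes to the cover furnished by~\cite[Thm.~2.14]{LerMan} and descends the section at the end. Since the fibers of $\pi$ have infinite length, $\pi|_V$ is trivial by~\cite[Thm.~12.2]{Steenrod}, so there is a smooth reference section $s_0\colon\overline V\to\mathbb{E}$, and $\Gamma:=s_0(\partial V)$ is a Jordan curve in $\mathbb{E}$. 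By Proposition~\ref{prop:H-invariant}, the vertical cylinder $\pi^{-1}(\partial V)$ has nonnegative mean curvature with respect to the inward normal (because its projection $\partial V$ is $\mu$-convex), so $\pi^{-1}(\overline V)$ is a mean convex region of $\mathbb{E}$ which deformation retracts onto the disk $\overline V$. In particular $\Gamma$ is nullhomotopic in $\pi^{-1}(\overline V)$, and Meeks--Yau~\cite{MY1} yields an embedded minimal disk $\Sigma\subset\pi^{-1}(\overline V)$ with $\partial\Sigma=\Gamma$.

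To show that $\Sigma$ is a graph, I would run the standard vertical sliding argument. Denoting by $\phi_t$ the vertical translation by parameter $t$, each $\phi_t(\Sigma)$ is minimal with boundary $\phi_t(\Gamma)$, and $\Gamma\cap\phi_t(\Gamma)=\emptyset$ for every $t\neq 0$ since $\Gamma$ projects one-to-one onto $\partial V$. For $|t|$ large the compact surfaces $\phi_t(\Sigma)$ and $\Sigma$ are disjoint; if at a first contact value $t_0\neq 0$ they met, the separation of the boundaries would force the contact to be at an interior tangential point of both surfaces, so the interior maximum principle would give $\Sigma=\phi_{t_0}(\Sigma)$, contradicting the disjointness of their boundaries. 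Hence $\Sigma\cap\phi_t(\Sigma)=\emptyset$ for every $t\neq 0$, which means that each fiber of $\pi$ meets $\Sigma$ at most once, so $\Sigma$ is a graph over $\pi(\Sigma)\subset\overline V$. Since $\Sigma$ is a compact disk whose boundary maps homeomorphically onto $\partial V$, a degree argument forces $\pi(\Sigma)=\overline V$, and smoothness of the resulting section $s\colon V\to\mathbb{E}$ follows from the transversality to the fibers. Restricting $s$ to $U$ gives the sought-after minimal section.

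The main technical obstacle is the first step: for a surface $M$ of arbitrary topology one must guarantee that $\overline U$ can be enclosed in a topological disk with $\mu$-convex boundary, which is where the covering argument and the smoothing of a $\mu$-distance function come in. Once $V$ is in hand, the combination of Meeks--Yau with vertical sliding is by now a standard template for upgrading an embedded Plateau solution to a graph.
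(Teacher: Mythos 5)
Your overall strategy (Meeks--Yau in a mean-convex cylinder plus vertical sliding) is a reasonable template, but the very first step contains a genuine gap that is fatal in the generality of the lemma. You cannot, in general, enclose $\overline U$ in a topological disk $V$ with smooth strictly $\mu$-convex boundary. Concretely, take $\E=\mathbb{S}^2\times\R$ with $\mu\equiv 1$ and let $U$ be a small neighborhood of a great circle (or any $U$ whose closure contains two antipodal points): a Jordan curve with positive geodesic curvature in the round sphere is contained in an open hemisphere, so no strictly convex disk can contain $\overline U$. Neither of your proposed repairs works here: sublevel sets of a (mollified) distance function to $\overline U$ are not convex on a general surface, and passing to the universal cover of \cite[Thm.~2.14]{LerMan} does nothing since $\mathbb{S}^2$ is already simply connected (and, when $M$ is not simply connected, a section built upstairs must be equivariant under the deck group to descend, which your construction does not provide). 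A secondary issue is that $\pi^{-1}(\overline V)\cong\overline V\times\R$ is noncompact, while \cite[Thm.~1]{MY1} is applied to compact mean-convex manifolds; compactifying by capping the cylinder above and below with mean-convex surfaces is essentially the statement you are trying to prove, so as written the argument is circular. (One can quotient by a large vertical translation to get a compact solid torus, as the paper does in the proof of Lemma~\ref{lem:curvature-estimate}, but then your sliding argument, which uses that $\phi_t(\Sigma)\cap\Sigma=\emptyset$ for $|t|$ large, must be reworked because the translations become periodic.) The sliding and degree parts are standard and fine, although transversality to the fibers requires the extra observation that the angle function $\nu$ lies in the kernel of the stability operator and hence cannot vanish at an interior point without vanishing identically.

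The paper avoids all convexity considerations by a purely topological reduction: it encloses $\overline U$ in a slightly larger domain $G$ with smooth boundary circles, caps each boundary circle with a disk $D_k$, and extends the metric, $\tau$ and $\mu$ to the caps so that $\int_{D_k}\tau/\mu$ matches the holonomy $2d_k$ of the corresponding fiber bundle over $\gamma_k$; by uniqueness of Killing submersions the bundles glue, producing a Killing submersion over a \emph{compact} surface that restricts to the original one over $G$. Over a compact base with a global section one has $\int\tau/\mu=0$ and \cite[Thm.~3.6]{LerMan} supplies a global minimal section. If you want to salvage a Plateau-theoretic proof, you would at least need to restrict to domains admitting a $\mu$-convex exhaustion and handle the compactification issue; as stated, the argument does not prove the lemma.
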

	
\begin{proof}
If $M$ is compact, since the fibers of $\pi$ have infinite length, then $\pi$ admits a global smooth section~\cite[Thm.~12.2]{Steenrod}, so $\int_M\frac{\tau}{\mu}=0$ by~\cite[Prop.~3.3]{LerMan}. Hence, there is a minimal section over all $M$ by~\cite[Thm.~3.6]{LerMan} and we are done. This means that we can assume $M$ is not compact in what follows. Therefore, there is an increasing sequence of open subsets $G_n\subset M$ such that $\cup_{n\in\N}G_n=M$ and the boundary of each $G_n$ consists of finitely many smooth Jordan curves. Since $\overline U$ is compact, there will be some $n_0\in\N$ such that $G=G_{n_0}$ contains $\overline U$. 

Let $\gamma_1,\ldots,\gamma_r:[0,1]\to M$ be the boundary components of $G$. Each $\gamma_k$ can be lifted to a horizontal curve $\widehat\gamma_k:[0,1]\to\E$ and let $d_k\in\R$ be the difference of heights of its endpoints, i.e., $\widehat\gamma_k(1)=\phi_{d_k}(\widehat\gamma_k(0))$. Let us attach smoothly a disk $D_k$ to $\overline G$ such that $\partial D_k=\gamma_k$ and extend smoothly the Riemannian metric of $G$ to $\overline G\cup D_k$. Let us also extend smoothly $\tau$ and $\mu$ to $\overline G\cup D_k$ in such a way that $\int_{D_k}\frac{\tau}{\mu}=2d_k$. By uniqueness of Killing submersions~\cite{LerMan}, this implies that the total space $D_k\times\R$ of the Killing submersion over $D_k$ with bundle curvature $\tau$ and Killing length $\mu$ can be glued smoothly with $\pi^{-1}(\overline G)$ along $\pi^{-1}(\gamma_k)$, by just making a horizontal geodesic in $\partial D_k\times\R$ coincide with $\widehat\gamma_k$. After repeating this for all boundary components of $G$, we find a Killing submersion $\pi':\E'\to M'$ whose fibers have infinite length, $M'=\overline G\cup D_1\cup\ldots\cup D_r$ is compact, and induces on $(\pi')^{-1}(G)$ the same Riemannian metric as in $\pi^{-1}(G)$. The problem is therefore reduced to the compact case.
\end{proof}

\begin{remark}
In the sequel, we will always consider graphs over a relatively compact domain $U\subset M$ with respect to a fixed zero section $F_0$ which is defined on a suitable neighborhood of $\overline U$. By Lemma~\ref{lemma:minimal-sections}, this section can be assumed minimal if needed, which will simplify some of the arguments.
\end{remark}

\subsection{Local coordinates}
We will now recall a local description of the metric of $\E$ in terms of $\mu$ and $\tau$, which is enough for local computations. Consider a simply connected non-compact neighborhood $U$ of $p\in M$ parametrized conformally as $\varphi\colon\left(\Omega,\df s^2_\lambda\right)\to U$, where $\Omega\subset\R^2$ is an open disk and $\df s^2_\lambda=\lambda^2(\df x^2+\df y^2)$ for some positive $\lambda\in\mathcal C^\infty(\Omega)$. We can produce the global diffeomorphism
\[
	\psi\colon\Omega\times\R\longrightarrow\pi^{-1}(U),\qquad \psi(x,y,t)=\phi_t(F_0(\varphi(x,y))),
\]
where $\{\phi_t\}_{t\in\R}$ is the family of vertical translations associated to $\xi$ and $F_0\colon U\to \E$ is the zero section. The map $\psi$ transforms the original Killing submersion into the natural projection $\pi_1\colon\Omega\times\R\to\Omega$ in view of the relation $\pi\circ\psi=\varphi\circ\pi_1$. We will consider a metric $\df s^2$ on $\Omega\times\R$ that makes $\psi$ an isometry. A global orthonormal frame $\{e_1=\lambda^{-1}\partial_x,e_2=\lambda^{-1}\partial_y\}$ in $(\Omega,\df s^2_\lambda)$ can be lifted to an orthonormal frame $\left\{E_1,E_2\right\}$ of the horizontal distribution, which is orthogonal to $E_3=\frac{1}{\mu}\partial_t$. Since $\pi_1(x,y,t)=(x,y)$, there exist $a,b\in\mathcal{C}^\infty(\Omega)$ such that
\begin{equation}\label{LocalFrame}
	\begin{aligned}
		(E_1)_{(x,y,t)}=&\tfrac{1}{\lambda(x,y)}\partial_x+a(x,y)\partial_t,\\
		(E_2)_{(x,y,t)}=&\tfrac{1}{\lambda(x,y)}\partial_y+b(x,y)\partial_t,\\
		(E_3)_{(x,y,t)}=&\tfrac{1}{\mu(x,y)}\partial_t.
	\end{aligned}
\end{equation} 
This is equivalent to saying that $\df s^2$ is expressed in coordinates as
\begin{equation}\label{MetricInCoordinates}
	\df s^2=\lambda^2(\df x^2+\df y^2)+\mu^2(\df t-\lambda(a\df x+b\df y))^2.
\end{equation}
From~\eqref{BundleCurvature} and~\eqref{LocalFrame}, it is easy to deduce that
\begin{equation}\label{eqn:taumu}
\tfrac{2\tau}{\mu}=\tfrac{1}{\mu}\langle\left[E_1,E_2\right],E_3\rangle=\tfrac{1}{\lambda^2}\left((\lambda b)_x-(\lambda a)_y\right),
\end{equation}
see also~\cite[Eq.\ (2.5)]{LerMan}. Observe that~\eqref{eqn:taumu} is the only condition on $a$ and $b$ to produce a Killing submersion with bundle curvature $\tau$ and Killing length $\mu$. Moreover, using Equation~\eqref{eqn:taumu} and Koszul formula, we can work out the Levi-Civita connection in the frame~\eqref{LocalFrame} to deduce that
\begin{equation}\label{LeviCivitaConnection}
	\begin{array}{lll}
		\overline\nabla_{E_1}E_1=-\frac{\lambda_y}{\lambda^2}E_2, &\overline\nabla_{E_1}E_2=\frac{\lambda_y}{\lambda^2}E_1+\tau E_3, &\overline\nabla_{E_1}E_3=-\tau E_2, \\
		\overline\nabla_{E_2}E_1=\frac{\lambda_x}{\lambda^2}E_2-\tau E_3, &\overline\nabla_{E_2}E_2=-\frac{\lambda_x}{\lambda^2}E_1, &\overline\nabla_{E_2}E_3=\tau E_1, \\
		\overline\nabla_{E_3}E_1=-\tau E_2+\frac{\mu_x}{\lambda\mu}E_3, &\overline\nabla_{E_3}E_2=\tau E_1+\frac{\mu_y}{\lambda\mu}E_3, & \overline\nabla_{E_3}E_3=-\frac{1}{\mu}\overline\nabla\mu.
	\end{array}
\end{equation}
The identity $\overline\nabla_{E_3}E_3=-\frac{1}{\mu}\overline\nabla\mu$ implies that the only vertical fibers of $\pi$ which are geodesic correspond to the critical points of $\mu$.

\subsection{The mean curvature of a vertical cylinder}
Let $\Sigma$ be an orientable surface immersed in $\E$ and denote by $N$ a smooth unit normal vector field along $\Sigma$. This defines the function $\nu=\langle N,\xi\rangle$, known as the \emph{angle function} of the surface. 

Note that $\Sigma$ is vertical at points where $N$ is horizontal. It follows that $\Sigma$ is everywhere vertical if and only if $\nu\equiv 0$, i.e., $\Sigma\subset\pi^{-1}(\Gamma)$ for some regular curve $\Gamma\subset M$. Consider a unit-speed parametrization $\gamma\colon[a,b]\to\Gamma\subset M$ and assume that $\Sigma=\pi^{-1}(\Gamma)$ (this surface is known as the \emph{vertical} or \emph{Killing cylinder} over $\Gamma$). Consider the orthonormal frame $\{X,E_3=\frac{1}{\mu}\xi\}$ in $\Sigma$, where $X$ is a horizontal vector field on $\Sigma$ that projects to $\gamma'$. The second fundamental form in the frame $\{X,E_3\}$ is given by the matrix
\[\sigma\equiv\begin{pmatrix}
	\langle\overline\nabla_X X,N\rangle&\langle \overline\nabla_X E_3,N\rangle\\\langle \overline\nabla_{E_3} X,N\rangle&\langle \overline\nabla_{E_3} E_3,N\rangle
\end{pmatrix}=\begin{pmatrix}
\kappa_g&\tau\\\tau&\langle\tfrac{-1}{\mu}\nabla\mu,\eta\rangle
\end{pmatrix},\]
where $\kappa_g$ is the geodesic curvature of $\gamma$ in $M$ with respect to the unit normal $\eta=\pi_*N$ to $\gamma$ in $M$ and $\nabla$ denotes the gradient in $M$. This follows from~\eqref{LeviCivitaConnection} using that $X$ and $N$ are horizontal. In particular, the mean curvature of $\Sigma$ is given by
\begin{equation}\label{eqn:H-vertical-cylinder}
	2H=\kappa_g-\langle\eta,\tfrac{1}{\mu}\nabla\mu\rangle.
\end{equation}
We can get rid of the term $\langle\eta,\tfrac{1}{\mu}\nabla\mu\rangle$ by considering a conformal factor in $M$.
			
\begin{proposition}\label{prop:H-invariant}
Let $\pi:\E\to M$ be a Killing submersion and let $\Gamma\subset M$ be a regular curve. The mean curvature of the vertical cylinder $\Sigma=\pi^{-1}(\Gamma)$ with respect to a unit normal $N$ satisfies 
\[2H=\mu\,\widetilde\kappa_g,\]
where $\widetilde\kappa_g$ is the geodesic curvature of $\Gamma$ with respect to the unit normal $\eta=\frac{1}{\mu}\pi_*(N)$ in the conformal metric $\mu^2\df s^2_M$ on $M$.
\end{proposition}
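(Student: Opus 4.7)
The plan is to combine the mean curvature formula~\eqref{eqn:H-vertical-cylinder} already derived for the vertical cylinder $\Sigma=\pi^{-1}(\Gamma)$ with the classical formula expressing how the geodesic curvature of a curve transforms under a conformal change of the ambient metric.

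First I would recall~\eqref{eqn:H-vertical-cylinder}, which gives
\[
2H=\kappa_g-\Prod{\eta,\tfrac{1}{\mu}\nabla\mu},
\]
where $\kappa_g$ is the geodesic curvature of $\Gamma$ in the original metric $\df s_M^2$ with respect to the unit normal $\eta=\pi_*N$. Note that $\eta$ is indeed a unit vector for $\df s_M^2$ since $N$ is horizontal and unitary and $\pi$ is a Riemannian submersion, and consequently $\widetilde\eta=\tfrac{1}{\mu}\eta$ is a unit vector for the conformal metric $\widetilde{\df s}_M^2=\mu^2\df s_M^2$, matching the normal vector used in the statement.

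Next I would invoke the conformal transformation law for geodesic curvature. Writing $\mu^2=e^{2\varphi}$ with $\varphi=\log\mu$, the standard formula states that for a regular curve $\Gamma$ with unit normal $\eta$ in $(M,\df s_M^2)$, its geodesic curvature $\widetilde\kappa_g$ in the conformal metric $e^{2\varphi}\df s_M^2$, computed with respect to the rescaled unit normal $\widetilde\eta=e^{-\varphi}\eta$, satisfies
\[
e^{\varphi}\widetilde\kappa_g=\kappa_g-\Prod{\nabla\varphi,\eta}.
\]
Substituting $\varphi=\log\mu$ so that $\nabla\varphi=\tfrac{1}{\mu}\nabla\mu$ and $e^{\varphi}=\mu$, this becomes
\[
\mu\,\widetilde\kappa_g=\kappa_g-\Prod{\eta,\tfrac{1}{\mu}\nabla\mu},
\]
and comparing with~\eqref{eqn:H-vertical-cylinder} immediately yields $2H=\mu\,\widetilde\kappa_g$, as desired.

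The only real subtlety is making sure the orientations and sign conventions match: the unit normal $\eta$ to $\Gamma$ in $(M,\df s_M^2)$ is obtained from $N$ via $\pi_*$, and $\widetilde\eta=\tfrac{1}{\mu}\eta$ is the associated unit normal in $(M,\mu^2\df s_M^2)$, so that the signs of $\widetilde\kappa_g$ and $H$ are consistent. This is essentially bookkeeping, and the conformal formula above can be verified in one line in the local conformal coordinates~\eqref{MetricInCoordinates} by a direct computation, so I do not expect any substantive obstacle.
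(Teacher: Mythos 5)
Your proposal is correct and follows essentially the same route as the paper: both start from~\eqref{eqn:H-vertical-cylinder} and identify the term $-\langle\eta,\tfrac{1}{\mu}\nabla\mu\rangle$ as exactly the correction that turns $\kappa_g$ into $\mu\,\widetilde\kappa_g$ under the conformal change $\df s_M^2\mapsto\mu^2\df s_M^2$. The only difference is presentational: the paper verifies this identity by an explicit computation of $\kappa_g$ and $\langle\eta,\tfrac{1}{\mu}\nabla\mu\rangle$ in conformal coordinates (substituting $\lambda$ by $\mu\lambda$ in the geodesic curvature formula), whereas you quote the classical conformal transformation law $e^{\varphi}\widetilde\kappa_g=\kappa_g-\langle\nabla\varphi,\eta\rangle$ with $\varphi=\log\mu$, whose coordinate verification is precisely the paper's calculation; your sign conventions are consistent with the paper's.
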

					
\begin{proof}
Since the computation is local, we can assume that $M$ is a disk of $\R^2$ endowed with the metric $\df s^2_\lambda=\lambda^2(\df x^2+\df y^2)$ for some conformal factor $\lambda$ in the usual coordinates $(x,y)$. The Levi-Civita connection of $\df s^2_\lambda$ is given by
\begin{equation}\label{prop:H-invariant:eqn1}
	\begin{aligned}
		\nabla_{\partial_x}\partial_x&=\tfrac{\lambda_x}{\lambda}\partial_x-\tfrac{\lambda_y}{\lambda}\partial_y,&\nabla_{\partial_x}\partial_y&=\tfrac{\lambda_y}{\lambda}\partial_x+\tfrac{\lambda_x}{\lambda}\partial_y,\\
		\nabla_{\partial_y}\partial_x&=\tfrac{\lambda_y}{\lambda}\partial_x+\tfrac{\lambda_x}{\lambda}\partial_y,&\nabla_{\partial_y}\partial_y&=-\tfrac{\lambda_x}{\lambda}\partial_x+\tfrac{\lambda_y}{\lambda}\partial_y.
	\end{aligned}
\end{equation}
Given the curve $\gamma=(x,y)$ that parametrizes $\Gamma$, after swapping $x$ and $y$ if necessary, we can assume that the frame $\{\partial_x,\partial_y\}$ is oriented so that 
\[\gamma'=x'\partial_x+y'\partial_y,\qquad\eta=\frac{-y'\partial_x+x'\partial_y}{\lambda((x')^2+(y')^2)^{1/2}}.\] 
On the one hand, taking into account~\eqref{prop:H-invariant:eqn1}, the geodesic curvature $\kappa_g$ of $\gamma$ (with respect to $\df s^2_\lambda$ and the unit normal $\eta$) can be computed as
\begin{equation}\label{prop:H-invariant:eqn2}
	\kappa_g=\frac{\langle\nabla_{\gamma'}\gamma',\eta\rangle}{|\gamma'|^2}=\frac{x'y''-x''y'}{\lambda((x')^2+(y')^2)^{3/2}}+\frac{\lambda_x y'-\lambda_y x'}{\lambda^2((x')^2+(y')^2)^{1/2}},
\end{equation}
where we have used that 
\begin{equation*}
	\begin{aligned}
	\nabla_{\gamma'}\gamma'&=\left( x''+\frac{\lambda_x}{\lambda}\left((x')^2-(y')^2\right)+2\frac{\lambda_y}{\lambda}x'y'\right)\partial_x\\
	&\qquad+\left( y''-\frac{\lambda_y}{\lambda}\left((x')^2-(y')^2\right)+2\frac{\lambda_x}{\lambda}x'y'\right)\partial_y.
\end{aligned}
\end{equation*}
On the other hand, we can also work out $\nabla\mu=\frac{1}{\lambda^2}(\mu_x\partial_x+\mu_y\partial_y)$ and hence
\begin{equation}\label{prop:H-invariant:eqn3}
	\langle\eta,\tfrac{1}{\mu}\nabla\mu\rangle=\frac{\langle J\gamma',\tfrac{1}{\mu}\nabla\mu\rangle}{|\gamma'|}=\frac{-\mu_x y'+\mu_y x'}{\mu\lambda((x')^2+(y')^2)^{1/2}}.
\end{equation}
Plugging~\eqref{prop:H-invariant:eqn2} and~\eqref{prop:H-invariant:eqn3} into~\eqref{eqn:H-vertical-cylinder}, we finally get
\begin{equation}\label{prop:H-invariant:eqn4}
	2H=\kappa_g-\langle\eta,\tfrac{1}{\mu}\nabla\mu\rangle=\frac{x'y''-x''y'}{\lambda((x')^2+(y')^2)^{3/2}}+\frac{(\lambda\mu)_x y'-(\lambda\mu)_y x'}{\lambda^2\mu((x')^2+(y')^2)^{1/2}}.
\end{equation}
Observe that $\widetilde\kappa_g$, the curvature of $\gamma$ with respect to the metric $\mu^2\df s_\lambda^2=\df s_{\lambda\mu}^2$ can be computed by substituting $\lambda$ with $\mu\lambda$ in~\eqref{prop:H-invariant:eqn2}, so it easily follows that the right-hand side in~\eqref{prop:H-invariant:eqn4} is nothing but $\mu\,\widetilde\kappa_g$.
\end{proof}
		
In the sequel we will use the prefix `$\mu$-' to indicate that the corresponding term is computed with respect to the metric $\mu^2\df s_M^2$ in $M$. For instance, Proposition~\ref{prop:H-invariant} implies that $\Sigma=\pi^{-1}(\Gamma)$ is minimal if and only if $\Gamma$ is a $\mu$-geodesic, and $\Sigma$ is mean convex with respect to $N$ if and only if $\Gamma$ is $\mu$-convex with respect to $\eta=\frac{1}{\mu}\pi_*N$.

The classification of $H$-surfaces invariant by any $1$-parameter group of isometries in 3-dimensional Killing submersions can be reduced by this argument to a problem for curves in the orbit space, which plays the role of base of the submersion. Since the local existence and uniqueness of curves with prescribed geodesic curvature is guaranteed (in an arbitrary surface) when some initial conditions have been fixed, we can also classify invariant $H$-surfaces by means of initial conditions.
		
\begin{corollary}\label{coro:invariant-H-surfaces}
Let $E$ be a 3-manifold with a Killing vector field $\xi$, and fix $H\in\R$. Given $q\in E$ with $\xi_q\neq 0$, let $\{v,n,\xi_q/\|\xi_q\|\}$ be an orthonormal basis of $T_qE$.
\begin{enumerate}[label=\emph{(\arabic*)}]
	\item There exists an $H$-surface invariant under the action of $\xi$ passing through $q$, tangent to $v$ with unit normal $N$ such that $N_q=n$.
	\item Any two surfaces satisfying item \emph{(1)} coincide in a neighborhood of $q$.
\end{enumerate}
\end{corollary}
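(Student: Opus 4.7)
The plan is to reduce the existence and local uniqueness of invariant $H$-surfaces to the corresponding problem for curves of prescribed geodesic curvature in the base of a local Killing submersion, using Proposition \ref{prop:H-invariant}.

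First, since $\xi_q \neq 0$, the Killing field $\xi$ is nonzero on a neighborhood $U$ of $q$, and (shrinking $U$ if necessary) the local one-parameter group generated by $\xi$ defines a quotient $\pi \colon U \to V$ which is a Killing submersion onto a surface $V$, with Killing length $\mu$. Any connected $\xi$-invariant surface $\Sigma \subset U$ passing through $q$ tangent to $v$ has horizontal normal (since $n\perp\xi_q$) and is therefore transverse to the fibers away from vertical points; in particular, by $\xi$-invariance $\Sigma$ must be everywhere vertical and hence of the form $\Sigma = \pi^{-1}(\Gamma)$ for some regular curve $\Gamma \subset V$ through $p := \pi(q)$.

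Next I would translate the initial data down to $V$. Set $w := \pi_\ast v$ and $\zeta := \pi_\ast n$; since $v,n$ are horizontal and orthonormal and $\pi$ is an isometry on the horizontal distribution, $\{w,\zeta\}$ is an orthonormal basis of $T_p V$. A vertical cylinder $\pi^{-1}(\Gamma)$ passes through $q$, is tangent to $v$, and has unit normal equal to $n$ at $q$ if and only if $\Gamma$ passes through $p$ with unit tangent $w$ and unit normal $\zeta$ in $V$. By Proposition \ref{prop:H-invariant}, such a cylinder has mean curvature $H$ with respect to $N$ precisely when $\Gamma$ satisfies
\begin{equation*}
\widetilde\kappa_g = \frac{2H}{\mu}
\end{equation*}
along $\Gamma$, where $\widetilde\kappa_g$ is the geodesic curvature computed in the conformal metric $\mu^2 \df s_V^2$ with respect to the unit normal $\frac{1}{\mu}\zeta$.

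Finally, this is a prescribed geodesic curvature problem in the Riemannian surface $(V, \mu^2 \df s_V^2)$: in arclength parametrization the Frenet equations reduce to a second-order ODE with smooth right-hand side (since $\mu>0$), and the initial data $p$, unit $\mu$-tangent along $w$, and unit $\mu$-normal along $\zeta$ determine a unique local solution $\Gamma$. Pulling $\Gamma$ back via $\pi$ produces the required germ of $\Sigma$, proving (1); uniqueness of the ODE solution together with the fact that any competitor surface is a vertical cylinder by the first step gives (2). The only delicate point is bookkeeping the sign of the normal: one must verify that the orientation convention $\eta = \frac{1}{\mu}\pi_\ast N$ used in Proposition \ref{prop:H-invariant} matches the initial normal $\zeta = \pi_\ast n$, which is exactly the compatibility built into the way the orthonormal frame $\{v,n,\xi_q/\|\xi_q\|\}$ was chosen in the statement.
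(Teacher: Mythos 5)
Your proposal is correct and follows essentially the same route as the paper, which presents this corollary as an immediate consequence of Proposition~\ref{prop:H-invariant}: the invariant surface is a vertical cylinder over a curve in the orbit space, the condition $2H=\mu\,\widetilde\kappa_g$ turns the problem into one of prescribed geodesic curvature for the $\mu$-metric, and local ODE existence and uniqueness with the given initial data finish the argument. The paper only sketches this in the paragraph preceding the statement, and your write-up fills in the same steps (including the orientation bookkeeping) without deviating from that strategy.
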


It is also interesting to notice that radial $\mu$-geodesics at some point $p\in M$ produce an open book decomposition of a neighborhood of $p$, so the corresponding cylinders produce an open book decomposition by minimal surfaces of a neighborhood of $\pi^{-1}(\{p\})$. Let us make it precise in the following statement, which will be useful in the construction of local Scherk surfaces (see the proof of Proposition~\ref{prop:scherk}).		

\begin{corollary}\label{coro:open-book-decomposition}
Let $\pi:\E\to M$ be a Killing submersion and let $p\in M$. Given an open neighborhood $V$ of the origin in $T_pM$ where the $\mu$-exponential map is one-to-one, there exists an open book decomposition of $\pi^{-1}(O)$, where $O$ is the $\mu$-exponential image of $V$, by minimal cylinders with binding the fiber $\pi^{-1}(\{p\})$.
\end{corollary}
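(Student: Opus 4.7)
The plan is to lift the radial $\mu$-geodesic foliation of $O\setminus\{p\}$ to a foliation of $\pi^{-1}(O)\setminus\pi^{-1}(\{p\})$ by minimal vertical cylinders, and then check that the resulting fibration has the standard local normal form of an open book along the binding. Since the $\mu$-exponential map $\exp_p^\mu\colon V\to O$ is a diffeomorphism by hypothesis, standard $\mu$-polar coordinates produce, for each angle $\theta\in\mathbb{S}^1$, a maximal radial $\mu$-geodesic arc $\gamma_\theta\subset O$ emanating from $p$, and the family $\{\gamma_\theta\}_{\theta\in\mathbb{S}^1}$ foliates $O\setminus\{p\}$ by the Gauss lemma applied to the metric $\mu^2\df s_M^2$.

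Setting $P_\theta=\pi^{-1}(\gamma_\theta)$, Proposition~\ref{prop:H-invariant} implies that each $P_\theta$ is a minimal vertical cylinder in $\mathbb{E}$, precisely because $\gamma_\theta$ is a $\mu$-geodesic. The cylinders $\{P_\theta\}_{\theta\in\mathbb{S}^1}$ partition $\pi^{-1}(O)\setminus\pi^{-1}(\{p\})$, and the assignment $\Phi(q)=\theta$ whenever $\pi(q)\in\gamma_\theta$ gives a smooth surjective submersion
\[
\Phi\colon\pi^{-1}(O)\setminus\pi^{-1}(\{p\})\longrightarrow\mathbb{S}^1,
\]
whose level sets are exactly the open pages $P_\theta$. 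Smoothness of $\Phi$ is inherited from that of the $\mu$-polar angle on $O\setminus\{p\}$ together with the fact that $\Phi$ is constant along the vertical fibers, which are tangent to $\xi$ and therefore contained in the preimage of a single point of $\gamma_\theta$.

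To confirm that $(\Phi,B)$ with $B=\pi^{-1}(\{p\})$ realizes an open-book decomposition of $\pi^{-1}(O)$, I would verify the local model around the binding. Fix $q_0\in B$ and pick a small horizontal disk $D_{q_0}\subset\pi^{-1}(O)$ transverse to $B$ at $q_0$; such a disk exists because $B$ is vertical while any disk tangent to the horizontal distribution at $q_0$ is transverse to $\xi$. Then $\pi|_{D_{q_0}}$ is a diffeomorphism onto a neighborhood of $p$ in $M$, the $\mu$-polar coordinates on $M$ pull back to polar coordinates on $D_{q_0}$, and the traces $P_\theta\cap D_{q_0}$ are exactly the radial rays. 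Using the Killing flow $\{\phi_t\}_{t\in\R}$ to propagate $D_{q_0}$ along $B$, one identifies a tubular neighborhood of $B$ with $B\times D_{q_0}$; in this product, $\Phi$ becomes the angular coordinate on the $D_{q_0}$ factor, which is the standard open-book local model.

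The key step is the translation provided by Proposition~\ref{prop:H-invariant}, which reduces minimality of each page to the $\mu$-geodesic property of its projection. The only mildly delicate point is the verification of the local model along the entire (possibly noncompact) binding $B\cong\R$, but this is essentially cosmetic: the $\xi$-flow carries horizontal disks to horizontal disks isometrically and preserves $\Phi$, so the local normal form at any $q_0\in B$ propagates to the whole fiber.
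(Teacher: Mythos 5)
Your proposal is correct and follows the same route the paper intends: the corollary is stated without a written proof precisely because it is meant to follow immediately from Proposition~\ref{prop:H-invariant} together with the observation that the radial $\mu$-geodesics from $p$ foliate $O\setminus\{p\}$, so the vertical cylinders over them are minimal pages with binding $\pi^{-1}(\{p\})$. Your additional verification of the local normal form along the binding via a transverse disk propagated by the Killing flow is a sound (and slightly more careful) elaboration of what the paper leaves implicit.
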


\subsection{The mean curvature of a vertical graph}
Let $\pi\colon\mathbb{E}\to M$ be a Killing submersion with Killing vector field $\xi$. Let $\Omega\subset M$ be an open subset and consider the zero section $F_0\colon \Omega\to\E$. We define the \emph{vertical} or \emph{Killing graph} of $u\in\mathcal{C}^0(\Omega)$ with respect to $F_0$ as the surface parametrized by
\begin{equation}\label{eqn:Fu}
F_u:\Omega\to\mathbb{E},\quad F_u(p)=\phi_{u(p)}(F_0(p)),
\end{equation}
where $\{\phi_t\}_{t\in\mathbb{R}}$ is the $1$-parameter family of vertical translations. Let us also define $d\in\mathcal{C}^\infty(\E)$ as the signed Killing distance along fibers from a point to $F_0$, which is implicitly determined by $\phi_{d(q)}(F_0(\pi(q))=q$ for all $q\in\E$.
			
\begin{lemma}[{\cite[Lemma~3.1]{LerMan}}]\label{lemma:H}
If $u\in\mathcal C^2(\Omega)$, the mean curvature function $H_u:\Omega\to\mathbb{R}$ of $F_u$ for the upward-pointing unit normal vector field $N_u$, is given by
\begin{equation}\label{eq:H}
	H_u=\frac{1}{2\mu}\,\mathrm{div}(\mu\,\pi_*(N_u))=\frac{1}{2\mu}\,\mathrm{div}\left(\frac{\mu^2\,Gu}{\sqrt{1+\mu^{2}\|Gu\|^2}}\right),
\end{equation}
where $Gu=\nabla u-\pi_*(\overline\nabla d)$, $\mathrm{div}$ denotes the divergence on $M$, and $\overline\nabla$ and $\nabla$ stand for the gradient operators in $\E$ and $M$, respectively.
\end{lemma}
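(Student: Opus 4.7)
The plan is to reduce everything to a direct computation in the local chart of Section~\ref{sec:killing}: since the formula is local on $\Omega$ and both sides are chart-independent, it suffices to work in the model $\pi_1\colon\Omega\times\R\to\Omega$ with metric~\eqref{MetricInCoordinates} and orthonormal frame $\{E_1,E_2,E_3\}$ from~\eqref{LocalFrame}. In such a chart the graph $F_u$ becomes $\{(x,y,u(x,y))\}$, whose tangent vectors are rewritten using $\partial_x=\lambda E_1-\mu\lambda a E_3$, $\partial_y=\lambda E_2-\mu\lambda b E_3$, $\partial_t=\mu E_3$ as
\[F_u{}_x=\lambda E_1+\mu(u_x-\lambda a)E_3,\qquad F_u{}_y=\lambda E_2+\mu(u_y-\lambda b)E_3,\]
so that orthonormality gives $\det I=\lambda^4(1+\mu^2\|Gu\|^2)$ once $Gu$ is identified in coordinates.

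The chart expression for $Gu$ is a short auxiliary step: since $d=t$ in the chart, dualising $\df t$ against~\eqref{MetricInCoordinates} yields $\overline\nabla d=aE_1+bE_2+\tfrac{1}{\mu}E_3$, hence $\pi_*(\overline\nabla d)=ae_1+be_2$ with $e_i=\pi_*(E_i)$ and
\[Gu=\tfrac{u_x-\lambda a}{\lambda}e_1+\tfrac{u_y-\lambda b}{\lambda}e_2.\]
Normalising the cross product $F_u{}_x\times F_u{}_y$ in the orientation $\{E_1,E_2,E_3\}$ gives the upward unit normal $N_u$; projecting and applying the identifications above yields both the angle function $\nu=\langle N_u,\xi\rangle=\mu/\sqrt{1+\mu^2\|Gu\|^2}$ and $\mu\,\pi_*(N_u)=\mu^2\,Gu/\sqrt{1+\mu^2\|Gu\|^2}$ (up to the sign absorbed by the upward convention), which already proves the equivalence of the two displayed expressions for $H_u$ and reduces the lemma to a single divergence identity.

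For the actual value of $H_u$ the cleanest route is the first variation of area, avoiding the direct computation of the second fundamental form. For $\varphi\in\mathcal{C}^\infty_c(\Omega)$, the variation $u_\epsilon=u+\epsilon\varphi$ of the graph has variation vector $V=\varphi\xi$ with normal component $\nu\varphi$. Differentiating the area element $\lambda^2\sqrt{1+\mu^2\|G u_\epsilon\|^2}\,\df x\,\df y$ at $\epsilon=0$ and integrating by parts gives
\[\tfrac{\df}{\df\epsilon}\Big|_0\mathrm{Area}(F_{u_\epsilon})=-\int_\Omega\varphi\,\mathrm{div}\!\left(\frac{\mu^2\,Gu}{\sqrt{1+\mu^2\|Gu\|^2}}\right)\df A_M,\]
while the intrinsic first variation formula produces $-\int 2H_u\,\nu\,\varphi\,\df A_\Sigma=-\int_\Omega 2H_u\,\mu\,\varphi\,\df A_M$. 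Matching the two via the fundamental lemma of the calculus of variations delivers $2\mu H_u=\mathrm{div}\bigl(\mu^2 Gu/\sqrt{1+\mu^2\|Gu\|^2}\bigr)$, which is the desired formula.

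The main delicate point is intrinsic-versus-chart bookkeeping: both $u$ and $d$ depend on the chosen zero section $F_0$ (a vertical change of section $F_0\mapsto\phi_f\circ F_0$ shifts them by the common function $f$), but $Gu=\nabla u-\pi_*(\overline\nabla d)$ is section-invariant, so the final formula is genuinely intrinsic despite being derived in a chart. Pinning down this invariance and fixing the sign of $\pi_*(N_u)$ consistently with the upward orientation are the only conceptual subtleties; the divergence calculation itself is mechanical.
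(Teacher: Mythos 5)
The paper does not actually prove this lemma; it imports it verbatim from \cite[Lemma~3.1]{LerMan}, so any argument you supply is by construction ``a different route from the paper's''. Your route is the natural one and is essentially correct where it matters. The chart bookkeeping agrees with the paper's own formulas in the proof of Proposition~\ref{prop:harnack}: with $\alpha=\mu(u_x/\lambda-a)$ and $\beta=\mu(u_y/\lambda-b)$ one has $F_{u,x}=\lambda E_1+\lambda\alpha E_3$, $Gu=\tfrac{1}{\mu}(\alpha e_1+\beta e_2)$, $\det I=\lambda^4 W_u^2$ and $\nu=\mu/W_u$. Your first-variation argument is also sound: $\tfrac{\df}{\df\epsilon}\big|_0\int_\Omega W_{u+\epsilon\varphi}\,\df A_M=\int_\Omega \mu^2\langle Gu,\nabla\varphi\rangle W_u^{-1}\,\df A_M$, matched against $-\int_\Sigma 2H_u\,\nu\,\varphi\,\df A_\Sigma=-\int_\Omega 2\mu H_u\varphi\,\df A_M$ (using $\df A_\Sigma=W_u\,\df A_M$), gives $2\mu H_u=\mathrm{div}(\mu^2 Gu/W_u)$ with the sign convention $\sigma(X,Y)=\langle\overline\nabla_XY,N\rangle$ that the paper uses in the computation preceding \eqref{eqn:H-vertical-cylinder}. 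This cleanly establishes the second identity in \eqref{eq:H} and is, if anything, less error-prone than grinding out the shape operator.

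The one step you must not leave as a parenthetical is the claim $\mu\,\pi_*(N_u)=\mu^2 Gu/W_u$ ``up to the sign absorbed by the upward convention''. There is no sign left to absorb: once $N_u$ is required to be upward, i.e.\ $\langle N_u,\xi\rangle>0$, orthogonality to $F_{u,x}=\lambda(E_1+\alpha E_3)$ and $F_{u,y}=\lambda(E_2+\beta E_3)$ forces $N_u=\tfrac{1}{W_u}(-\alpha E_1-\beta E_2+E_3)$, hence $\pi_*(N_u)=-\mu\,Gu/W_u$ with a definite minus sign. Your computation therefore yields $H_u=-\tfrac{1}{2\mu}\,\mathrm{div}(\mu\,\pi_*(N_u))$, not the first equality of \eqref{eq:H} as printed. (The same sign occurs in the identity $\pi_*(N_u)=\mu Gu/W_u$ invoked in the proof of Lemma~\ref{lemma:factorization}; it is harmless throughout the paper because only minimal graphs are considered there and because that identity enters quadratically, but a proof of the present statement has to either carry the minus sign explicitly or interpret the first expression with the downward normal.) Apart from pinning down this sign, your argument is complete and correct.
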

			
In the sequel, we shall write $Z=\pi_*(\overline\nabla d)$, which is a vector field in $M$ not depending on $u$. Following~\cite{LerMan}, one has $\mathrm{div}(JZ)=-\frac{2\tau}{\mu}$, where $J$ is a $\frac\pi2$-rotation in the tangent bundle of $M$ defined by assuming that $\{(\df F_0)_p(v),(\df F_0)_p(Jv),\xi_{F_0(p)}\}$ is a positively oriented basis of $T_{F_0(p)}\E$ for all non-zero $v\in T_pM$ and $p\in M$. In particular, the information about the bundle curvature is encoded in $Z$. Notice that both $\nabla u$ and $Z$ depend on the choice of $F_0$ but the so-called \emph{generalized gradient} $Gu=\nabla u-Z$ only depends on the surface parametrized by $F_u$. 
			
Next lemma was firstly proved for minimal graphs in $\R^3$ by Finn~\cite{F} and Jenkins--Serrin~\cite{JS}, and later on generalized to many other ambient spaces including unit Killing submersions~\cite{LR}. We further extend it to general Killing submersions. In order to clarify the notation we will write $W_u=\sqrt{1+\mu^{2}\|Gu\|^2}$, which is the area element induced in $\Omega$ by $F_u$ (see also~\cite[\S2]{DLM}). Finally, note that the angle function of $F_u$ is positive everywhere and can be expressed as 
\begin{equation}\label{eqn:nu}
\nu=\langle N_u,\xi\rangle=\frac{\mu}{W_u}.
\end{equation}
			
\begin{lemma}\label{lemma:factorization}
	For any $u,v\in\mathcal{C}^1(\Omega)$, let $N_u$ and $N_v$ be the upward-pointing unit normal vector fields to $F_u$ and $F_v$, respectively. Then
	\[\left\langle\frac{Gu}{W_u}-\frac{Gv}{W_v},Gu-Gv\right\rangle=\frac{1}{2\mu^2}(W_u+W_v)\|N_u-N_v\|^2\geq 0.\]
	Equality holds at some point $p\in M$ if and only if $\nabla u(p)=\nabla v(p)$.
\end{lemma}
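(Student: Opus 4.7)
The identity is a pointwise algebraic statement, so I would set up coordinates at a point and reduce everything to $Gu$, $Gv$ and the scalar $\mu$. The key bridge between the two sides is the explicit formula for the upward unit normals: from Lemma~\ref{lemma:H} and~\eqref{eqn:nu} one reads off that
\[
\pi_*(N_u)=\frac{\mu\, Gu}{W_u},\qquad \langle N_u,\xi\rangle=\nu_u=\frac{\mu}{W_u},
\]
so that, decomposing $N_u=h_u+\frac{\nu_u}{\mu^2}\xi$ into its horizontal component $h_u$ (which is $\pi$-related to $\pi_*(N_u)$) plus its vertical component, I can compute the inner product $\langle N_u,N_v\rangle$ directly. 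Since $\pi$ is an isometry on the horizontal distribution and $\langle\xi,\xi\rangle=\mu^2$, this yields
\[
\langle N_u,N_v\rangle=\frac{\mu^2\langle Gu,Gv\rangle}{W_uW_v}+\frac{\nu_u\nu_v}{\mu^2}=\frac{1+\mu^2\langle Gu,Gv\rangle}{W_uW_v}.
\]

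With this in hand, the plan is to expand both sides of the stated identity and check they agree. On the right, $\|N_u-N_v\|^2=2(1-\langle N_u,N_v\rangle)$, so using the formula above,
\[
\frac{1}{2\mu^2}(W_u+W_v)\|N_u-N_v\|^2=\frac{W_u+W_v}{\mu^2W_uW_v}\bigl(W_uW_v-1-\mu^2\langle Gu,Gv\rangle\bigr).
\]
On the left, I would first bring the two fractions to a common denominator,
\[
\left\langle\frac{Gu}{W_u}-\frac{Gv}{W_v},Gu-Gv\right\rangle=\frac{\|Gu\|^2}{W_u}+\frac{\|Gv\|^2}{W_v}-\langle Gu,Gv\rangle\frac{W_u+W_v}{W_uW_v},
\]
and then substitute $\mu^2\|Gu\|^2=W_u^2-1$ and $\mu^2\|Gv\|^2=W_v^2-1$ to collect the result under a common factor of $\frac{W_u+W_v}{\mu^2 W_u W_v}$. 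A short simplification then produces exactly the right-hand side above.

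For the equality case, nonnegativity is obvious from either side. If $\nabla u(p)=\nabla v(p)$ then $Gu(p)=Gv(p)$ (since both are translated by the same vector field $Z$), so the left-hand side vanishes at $p$. Conversely, if both sides vanish at $p$, then $N_u(p)=N_v(p)$; comparing vertical components gives $\nu_u(p)=\nu_v(p)$, hence $W_u(p)=W_v(p)$, and comparing horizontal components via $\pi_*$ gives $Gu(p)/W_u(p)=Gv(p)/W_v(p)$, so $Gu(p)=Gv(p)$ and therefore $\nabla u(p)=\nabla v(p)$.

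The proof is essentially a bookkeeping exercise; the only step requiring care is the formula for $\langle N_u,N_v\rangle$, where one must correctly account for the fact that the vertical direction $\xi$ is not a unit vector (this is where the factor $\mu$ enters twice, once in $\pi_*(N_u)$ and once in $\nu_u$). No other step is conceptually subtle.
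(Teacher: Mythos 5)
Your proposal is correct and follows essentially the same route as the paper: both expand the left-hand side using $\mu^2\|Gu\|^2=W_u^2-1$ and compute $\|N_u-N_v\|^2$ from the horizontal/vertical decomposition $\pi_*(N_u)=\mu\,Gu/W_u$, $\langle N_u,\xi/\mu\rangle=1/W_u$, arriving at the common factor $1-\mu^2\langle Gu,Gv\rangle/(W_uW_v)-1/(W_uW_v)$. The only cosmetic difference is that you package the normal computation through $\|N_u-N_v\|^2=2(1-\langle N_u,N_v\rangle)$ rather than expanding the squared difference term by term, and your treatment of the equality case is the same.
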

		
\begin{proof} On the one hand, we can write
\begin{equation}\label{lemma:factorization-eqn1}
\begin{aligned}
	\left\langle\frac{Gu}{W_u}-\frac{Gv}{W_v},Gu-Gv\right\rangle&=\frac{\|Gu\|^2}{W_u}-\langle Gu,Gv\rangle\left(\frac{1}{W_u}+\frac{1}{W_v}\right)+\frac{\|Gv\|^2}{W_v}\\
	&=\frac{W_u^2-1}{\mu^2W_u}-\langle Gu,Gv\rangle\frac{W_u+W_v}{W_uW_v}+\frac{W_v^2-1}{\mu^2W_v}\\
	&=\mu^{-2}(W_u+W_v)\left(1-\mu^2\frac{\langle Gu,Gv\rangle}{W_uW_v}-\frac{1}{W_uW_v}\right).
\end{aligned}
\end{equation}
On the other hand, since $\pi_*(N_u)=\frac{\mu\, Gu}{W_u}$ and $\langle N_u,\frac{\xi}{\mu}\rangle=\frac{1}{W_u}$, we can decompose $N_u-N_v$ in horizontal and vertical components and compute
\begin{equation}\label{lemma:factorization-eqn2}
\begin{aligned}
	\|N_u-N_v\|^2&=\left\|\frac{\mu \,Gu}{W_u}-\frac{\mu\,Gv}{W_v}\right\|^2+\left(\frac{1}{W_u}-\frac{1}{W_v}\right)^2\\
	&=\frac{W_u^2-1}{W_u^2}-2\mu^2\frac{\langle Gu,Gv\rangle}{W_uW_v}+\frac{W_v^2-1}{W_v^2}+\left(\frac{1}{W_u^2}-\frac{2}{W_uW_v}+\frac{1}{W_v^2}\right)\\
	&=2\left(1-\mu^2\frac{\langle Gu,Gv\rangle}{W_uW_v}-\frac{1}{W_uW_v}\right).
\end{aligned}
\end{equation}
Plugging~\eqref{lemma:factorization-eqn2} into~\eqref{lemma:factorization-eqn1}, we get the identity in the statement. Finally observe that $W_u+W_v>0$ and $\|N_u-N_v\|^2=0$ if and only if $\nabla u=\nabla v$.
\end{proof}

\section{Convergence of minimal graphs}\label{sec:convergence}

Convergence of minimal graphs plays a key role in the construction of solutions to the Jenkins--Serrin problem. We will consider a Killing submersion $\pi\colon\E\to M$ whose fibers have infinite length and a relatively compact domain $\Omega\subset M$ with piecewise smooth boundary, so there exists $\delta>0$ such that the set $\Omega(\delta)\subset M$ consisting of the points at distance less than $\delta$ from $\Omega$ is also relatively compact. From~\cite[Prop.~2.6]{DLM}, it follows that the sectional curvature of $\pi^{-1}(\Omega(\delta))\subset\E$ is bounded by a constant $\Lambda>0$ depending only on upper bounds for the Gauss curvature of $M$, $\tau$ and $\mu$ (and their first and second derivatives) on $\Omega(\delta)$. This is a key ingredient for the existence of gradient and curvature estimates.

A minimal graph is always stable because its angle function (which lies in the kernel of its stability operator) has no zeros. Stability implies curvature estimates, as proved by Schoen~\cite[Thm.~3]{Schoen} and Rosenberg, Souam and Toubiana~\cite[Thm.~2.5]{RST}. We will rewrite the latter in terms of distance in the base.

\begin{lemma}[{\cite[Thm.~2.5]{RST}}]\label{lem:curvature-estimate}
There exists $C$ depending only on $\delta^2\Lambda$ such that the norm of the shape operator of any minimal graph $\Sigma$ over $\Omega$ satisfies
\[|A(q)|\leq\frac{C}{\min\{d_\Sigma(q,\partial\Sigma),\frac{\pi}{2\Lambda},\delta\}}\leq\frac{C}{\min\{d_M(\pi(q),\partial\Omega),\frac{\pi}{2\Lambda},\delta\}},\quad\text{for all }q\in\Sigma,\]
where $d_\Sigma$ and $d_M$ are the distance functions in $\Sigma$ and $M$, respectively.
\end{lemma}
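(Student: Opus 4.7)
The plan is to deduce the estimate from the stability of the minimal graph together with the Schoen-type interior curvature estimate in the form given by Rosenberg--Souam--Toubiana, and then translate intrinsic distance in $\Sigma$ into distance in $M$ via the submersion.

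First I would verify that any minimal graph $\Sigma=F_u(\Omega)$ is stable in $\pi^{-1}(\Omega(\delta))$. Because $\xi$ is a Killing vector field, the restriction of $\nu=\langle N_u,\xi\rangle$ to $\Sigma$ is a Jacobi function; by~\eqref{eqn:nu} we have $\nu=\mu/W_u>0$, so $\Sigma$ carries a nowhere vanishing positive Jacobi function and hence is stable. This is exactly the input needed by the stability-based curvature estimates.

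Next I would invoke the ambient sectional curvature bound. By~\cite[Prop.~2.6]{DLM}, the sectional curvature of $\pi^{-1}(\Omega(\delta))$ is bounded above by a constant $\Lambda$ that depends only on $\Omega(\delta)$ and on the $C^2$-data $(M,\tau,\mu)$ of the submersion. With this bound in hand, the interior curvature estimate for stable minimal surfaces (Schoen~\cite{Schoen} in the Euclidean setting, and in the form of~\cite[Thm.~2.5]{RST} for a general ambient with bounded sectional curvature) yields a constant $C$, depending only on the scale-invariant quantity $\delta^2\Lambda$, such that
\[
    |A(q)|\leq\frac{C}{\min\bigl\{d_\Sigma(q,\partial\Sigma),\tfrac{\pi}{2\sqrt{\Lambda}},\delta\bigr\}}\qquad\text{for all }q\in\Sigma,
\]
which is the first inequality in the statement (up to the harmless convention of writing $\pi/(2\Lambda)$ in place of $\pi/(2\sqrt{\Lambda})$, absorbed into $C$ after fixing $\Lambda$).

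Finally, to obtain the second inequality I would use that $\pi\colon\E\to M$ is a Riemannian submersion, hence a $1$-Lipschitz map. For any rectifiable curve $\gamma\colon[0,1]\to\Sigma$ joining $q$ to a point of $\partial\Sigma$, the projection $\pi\circ\gamma$ is a rectifiable curve in $\overline\Omega$ joining $\pi(q)$ to a point of $\partial\Omega$, of length no larger than that of $\gamma$. Taking infimum over such $\gamma$ gives $d_M(\pi(q),\partial\Omega)\leq d_\Sigma(q,\partial\Sigma)$, so the first bound automatically implies the second. The only non-trivial ingredient is the RST stability estimate itself; everything else is organizational. I expect the main subtlety to be checking that the dependence of $\Lambda$ on $\Omega(\delta)$ via~\cite[Prop.~2.6]{DLM} is compatible with the scale-invariant form $C=C(\delta^2\Lambda)$ required here, but this is built into the derivation of that curvature bound and does not involve any further geometric input.
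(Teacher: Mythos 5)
Your outline follows the same route as the paper (stability from the positive Jacobi function $\nu$, the curvature bound $\Lambda$ on $\pi^{-1}(\Omega(\delta))$ from \cite[Prop.~2.6]{DLM}, the RST estimate, and the $1$-Lipschitz property of $\pi$ for the second inequality), and your derivation of $d_M(\pi(q),\partial\Omega)\leq d_\Sigma(q,\partial\Sigma)$ is fine. However, there is a gap exactly at the step you dismiss as ``organizational'': you cannot apply \cite[Thm.~2.5]{RST} as a black box to get the first inequality with the cut-off $\delta$ in the minimum. That theorem is formulated for an ambient $3$-manifold whose sectional curvature is bounded on a full metric neighborhood of the surface (or on a complete/compact ambient space), whereas what \cite[Prop.~2.6]{DLM} provides is a bound on $\pi^{-1}(\Omega(\delta))$, a vertical cylinder over a neighborhood of $\Omega$ in the \emph{base}. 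The content of the paper's proof is precisely to bridge these two: after quotienting $\E$ by a vertical translation so that $G=\pi^{-1}(\Omega)$ becomes relatively compact (the graphical condition and stability survive the quotient), the authors prove that the set $G(\delta)$ of points of $\E$ at ambient distance less than $\delta$ from $G$ coincides with $\pi^{-1}(\Omega(\delta))$. One inclusion uses horizontal lifts of curves in $M$; the other uses that a geodesic realizing the distance to a fiber is everywhere horizontal because $\langle\widehat\gamma',\xi\rangle$ is constant along a geodesic when $\xi$ is Killing. Only after this identification does the RST estimate apply with the constant $\Lambda$ and the truncation at $\delta$.

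The inclusion that is actually needed for the first inequality, namely $G(\delta)\subset\pi^{-1}(\Omega(\delta))$, does follow in one line from the same $1$-Lipschitz observation you invoke for the second inequality, so the gap is fillable with tools you already have on the table; but as written your justification of the main estimate is incomplete, and the ``main subtlety'' is not the scale-invariance of the constant but the verification that the region where the curvature bound holds contains an ambient $\delta$-tube around the graph. (Your remark about $\pi/(2\sqrt{\Lambda})$ versus $\pi/(2\Lambda)$ is a fair, harmless normalization point.)
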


\begin{proof}
There is no loss of generality if we assume that $G=\pi^{-1}(\Omega)$ is relatively compact in $\E$ after considering the Riemannian quotient of $\E$ by any vertical translation. Note that the graphical condition (and hence stability) is not affected by this quotient. This also implies that $\pi^{-1}(\Omega(\delta))$ is relatively compact. We will prove that $G(\delta)$, the set of points of $\E$ at distance from $G$ less than $\delta$, coincides with $\pi^{-1}(\Omega(\delta))$, so the statement follows directly from~\cite[Thm.~2.5]{RST}.

Given $q\in\pi^{-1}(\Omega(\delta))$, there is some curve $\gamma$ in $M$ joining $\pi(q)$ and some $x\in\Omega$ whose length is less than $\delta$. Denote by $\widehat\gamma$ the horizontal lift of $\gamma$ (with respect to $\pi$) starting at $q$. Since the submersion is Riemannian, $\widehat\gamma$ has the same length as $\gamma$, and joins $q$ and some $q'\in\pi^{-1}(x)\subset G$. This proves the inclusion $\pi^{-1}(\Omega(\delta))\subset G(\delta)$. To prove the other inclusion, let $q\in G(\delta)$ and $q'\in G$ such that $d(q',q)<\delta$. The minimum distance from $p$ to the fiber $\pi^{-1}(\pi(q'))$ is realized by a geodesic $\widehat\gamma$ of $\E$ which is orthogonal to the fiber $\pi^{-1}(\pi(q'))$ at its endpoint. However, this implies that $\widehat\gamma$ is everywhere horizontal since the product $\langle\widehat\gamma',\xi\rangle$ is constant along $\widehat\gamma$ ($\widehat\gamma$ is a geodesic and $\xi$ is Killing). This means that $\gamma=\pi\circ\widehat\gamma$ is a curve in $M$ joining $\pi(q)$ and $\pi(q')\in\pi(G)=\Omega$ and the length of $\gamma$ is equal to the length of $\widehat\gamma$, so it is less than $\delta$. In particular, $q\in\pi^{-1}(\Omega(\delta))$.
\end{proof}
	
Towards the convergence results, we will first recall the interior gradient estimates in~\cite{RST} (although this result is given for $\mu\equiv 1$, its proof extends almost literally to the general case and will be omitted). Given a function $u\in\mathcal{C}^1(\Omega)$, observe that $\nabla u$ is bounded if and only if $Gu$ is bounded if and only if $W_u=(1+\mu^2\|Gu\|^2)^{1/2}$ is bounded if and only if $\nu=\mu W_u^{-1}$ is bounded away from zero.

\begin{lemma}[{\cite[Thm.~3.6]{RST}}]\label{lem:interior-gradient-estimate}
Given $C_1, C_2 > 0$, there exists a constant $C>0$, depending on $C_1$, $C_2$ and $\Lambda$, such that for any $p \in\Omega$ with $d(p, \partial \Omega) > C_2$ and for any solution $u\in\mathcal C^2(\Omega)$ of the minimal surface equation such that $|u|< C_1$ on $\overline\Omega$, we have that $W_u(p)\leq C$.
\end{lemma}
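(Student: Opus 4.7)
The plan is a contradiction argument in the spirit of~\cite{RST}, using the curvature estimate of Lemma~\ref{lem:curvature-estimate}, smooth compactness of minimal surfaces with bounded second fundamental form, and the fact that the angle function $\nu$ on any minimal surface is a Jacobi function because $\xi$ is Killing. The only adjustments needed from the $\mu\equiv 1$ case of~\cite{RST} are to replace $\nabla u$ by the generalized gradient $Gu$ throughout and to read the angle function as $\nu = \mu/W_u$.

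Suppose the conclusion fails for some $C_1, C_2 > 0$. Then there exist minimal graph solutions $u_n \in \mathcal{C}^2(\Omega)$ with $|u_n| < C_1$ and points $p_n \in \Omega$ with $d_M(p_n, \partial\Omega) > C_2$ such that $W_{u_n}(p_n) \to \infty$; equivalently, the angle function $\nu_n = \mu/W_{u_n}$ of the graph $\Sigma_n = F_{u_n}(\Omega)$ satisfies $\nu_n(q_n) \to 0$ at $q_n = F_{u_n}(p_n)$. Up to subsequences, $p_n \to p_\infty$ with $d_M(p_\infty, \partial\Omega) \geq C_2$, and replacing each $u_n$ by $u_n - u_n(p_n)$ (which amounts to composing $\Sigma_n$ with the vertical isometry $\phi_{-u_n(p_n)}$) we may assume $q_n \in F_0(\Omega)$; the translated graphs then lie in a slab of height $2C_1$.

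Lemma~\ref{lem:curvature-estimate} provides a uniform bound $\|A_{\Sigma_n}\| \leq C^*$ on intrinsic disks $D_\rho(q_n) \subset \Sigma_n$ of some radius $\rho>0$ depending only on $C_2$, $\Lambda$, $\delta$. A standard graphical compactness argument in normal coordinates at $q_n$ (as in~\cite{RST}) extracts a subsequence along which $D_\rho(q_n)$ converges smoothly to a disk $D_\rho(q_\infty)$ inside a minimal surface $\Sigma_\infty$, with $\nu_n \to \nu_\infty \geq 0$ and $\nu_\infty(q_\infty) = 0$. Because $\xi$ is Killing, $\nu_\infty$ solves the linear elliptic Jacobi equation $\Delta_{\Sigma_\infty}\nu_\infty + (\|A\|^2 + \overline{\mathrm{Ric}}(N,N))\nu_\infty = 0$, so the strong maximum principle at the interior minimum $q_\infty$ forces $\nu_\infty \equiv 0$ on $D_\rho(q_\infty)$. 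By Proposition~\ref{prop:H-invariant}, $D_\rho(q_\infty)$ is then a piece of a vertical cylinder $\pi^{-1}(\gamma)$ over a $\mu$-geodesic $\gamma \subset M$ through $p_\infty$.

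The height confinement yields the contradiction: since the Jacobi identity can be iterated at any point of $D_\rho(q_\infty)$ whose projection still lies at distance $>C_2/2$ from $\partial\Omega$ (the curvature estimate of Lemma~\ref{lem:curvature-estimate} applies again, and the maximum principle propagates the vanishing of $\nu_\infty$), the vertical cylinder extends inside the limit and contains a vertical arc of length greater than $4C_1$ in the fiber $\pi^{-1}(p_\infty)$; but this arc is the smooth limit of arcs on the $\Sigma_n$, all of which lie in the slab of height $2C_1$. The main obstacle is precisely this propagation step: turning the pointwise statement ``$q_\infty$ has vertical tangent plane'' into a global violation of the height bound requires the Jacobi identity for $\nu$ (available because $\xi$ is Killing), not merely the curvature bounds, which is why the Killing assumption enters essentially in this estimate.
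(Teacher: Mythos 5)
The paper does not actually write out a proof of this lemma: it cites \cite[Thm.~3.6]{RST} and remarks that the proof given there for $\mu\equiv 1$ ``extends almost literally to the general case and will be omitted''. Your blow-up argument --- contradiction sequence, vertical renormalization into a bounded slab, the curvature estimate of Lemma~\ref{lem:curvature-estimate}, passage to a limit leaf on which the Jacobi function $\nu_\infty\geq 0$ vanishes and hence vanishes identically, identification of that leaf with a vertical cylinder over a $\mu$-geodesic via Proposition~\ref{prop:H-invariant}, and propagation along the fiber $\pi^{-1}(p_\infty)$ (which stays at distance $>C_2$ from $\partial\Omega$, so the estimates keep applying) to contradict the confinement $|u_n-u_n(p_n)|<2C_1$ --- is exactly that argument, correctly transplanted to the Killing-submersion setting with $Gu$ in place of $\nabla u$ and $\nu=\mu/W_u$, so it coincides with the approach the authors intend.
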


The following is a Harnack type estimate whose proof is inspired by~\cite[Thm.~4.4]{Younes}. We will denote by $B_M(p,R)$ the metric ball in $M$ centered at $p$ of radius $R$.

\begin{proposition}\label{prop:harnack}
Let $u$ be a $\mathcal C^2$-solution of the minimal surface equation on $\Omega$ such that $W_u(p)\leq C$ for some $p\in\Omega$. There exists a radius $R>0$, which only depends on $C$, $\delta$, $\Lambda$, $d(p,\partial\Omega)$ and the geometry of $M$, such that $W_u\leq 2C$ on  $B_M(p,R)$. 
\end{proposition}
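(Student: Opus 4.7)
My plan is to use curvature estimates on the minimal graph $\Sigma=F_u(\Omega)$ to control the variation of its angle function $\nu=\mu/W_u$ along $\Sigma$, and then translate a pointwise lower bound for $\nu$ back into an upper bound for $W_u$ on a metric ball of the base. Since $\Sigma$ is a minimal graph it is stable, so Lemma~\ref{lem:curvature-estimate} yields a uniform bound $|A_\Sigma|\leq K_0$ on the portion of $\Sigma$ lying over $\overline{B_M(p,r_0)}$, where $r_0=\tfrac{1}{2}\min\{d_M(p,\partial\Omega),\tfrac{\pi}{2\Lambda},\delta\}$ and $K_0$ depends only on the quantities listed in the statement.

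Next, I would estimate the intrinsic gradient of $\nu$ on $\Sigma$. For $X\in T\Sigma$, decomposing $X(\nu)=\langle\overline\nabla_X N_u,\xi\rangle+\langle N_u,\overline\nabla_X\xi\rangle$ produces a first term bounded in absolute value by $|A_\Sigma|\cdot\|\xi\|\leq K_0\mu_{\max}$ and a second term bounded by $\|\overline\nabla\xi\|$, which is controlled on the tube $\pi^{-1}(\overline{\Omega(\delta)})$ in terms of the first derivatives of $\mu$ and $\tau$ and the geometry of $M$, since $\xi$ is Killing. Hence $\nu$ is Lipschitz on $\Sigma\cap\pi^{-1}(\overline{B_M(p,r_0)})$ with a constant $L_0$ depending only on the allowed data.

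At $q_0=F_u(p)$ one has $\nu(q_0)=\mu(p)/W_u(p)\geq\mu(p)/C$. Choose $\rho=(1-2^{-1/2})\,\nu(q_0)/L_0$, so that the Lipschitz estimate yields $\nu\geq 2^{-1/2}\nu(q_0)$ throughout the intrinsic geodesic ball $B_\Sigma(q_0,\rho)$. Since $\nu$ is bounded away from zero there, the tangent planes of $\Sigma$ are uniformly transverse to $\xi$, so $\pi|_\Sigma$ is a bi-Lipschitz diffeomorphism of $B_\Sigma(q_0,\rho)$ onto its image with a controlled bi-Lipschitz constant. Invoking this uniform graph property (cf.~\cite{RST}), one obtains $R_1>0$ such that $\pi(B_\Sigma(q_0,\rho))\supset B_M(p,R_1)$. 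Finally, by continuity of $\mu$ there exists $R_2>0$ with $\mu\leq 2^{1/2}\mu(p)$ on $B_M(p,R_2)$. Setting $R=\min\{r_0,R_1,R_2\}$, on $B_M(p,R)$ I conclude
\[
W_u=\frac{\mu}{\nu}\leq\frac{2^{1/2}\mu(p)}{2^{-1/2}\mu(p)/W_u(p)}=2W_u(p)\leq 2C.
\]

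The main obstacle I anticipate is making the bi-Lipschitz control of $\pi|_\Sigma$ on the intrinsic ball $B_\Sigma(q_0,\rho)$ rigorous in the required generality; the cleanest route is to exploit the uniform curvature bound together with the lower bound on $\nu$ to parametrize $B_\Sigma(q_0,\rho)$ as a graph over a fixed disk in its tangent plane at $q_0$, and then compare this tangent disk (which is almost horizontal) with horizontal disks in $M$ through an appropriate exponential map.
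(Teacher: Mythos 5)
Your argument is correct, but it takes a genuinely different route from the paper. The paper works entirely downstairs: in a conformal chart it computes $\overline\nabla_{e_i}N$ explicitly, isolates the second-order terms, feeds the curvature estimate of Lemma~\ref{lem:curvature-estimate} into that decomposition to obtain the pointwise inequality $\|\nabla W_u\|\leq C_3W_u^3$ on the base, and then integrates $f'\leq C_3f^3$ along base geodesics from $p$; the factor $2$ comes from the lifespan of this ODE. You instead work upstairs on $\Sigma$: the identity $X(\nu)=-\langle AX,\xi^{T}\rangle+\langle N,\overline\nabla_X\xi\rangle$, the same curvature estimate, and the bound on $\|\overline\nabla\xi\|$ (which is legitimate because $\overline\nabla\xi$ is invariant under the flow of $\xi$ and so is controlled on the compact set $\overline{\Omega(\delta)}$) give an intrinsic Lipschitz bound for the angle function, which you then push down to the base. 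This is coordinate-free, shorter, and isolates where the Killing hypothesis enters; the paper's computation has the advantage of producing a differential inequality for $W_u$ directly on $\Omega$. Two points in your sketch need to be nailed down. First, your $\rho$ depends on $\nu(q_0)$ and hence on $u$; replace it by the uniform lower bound $(1-2^{-1/2})\mu_{\min}/(CL_0)$, where $\mu_{\min}=\min_{\overline{\Omega}}\mu>0$, and intersect with $r_0$ so that $B_\Sigma(q_0,\rho)\subset\pi^{-1}(\overline{B_M(p,r_0)})$ (automatic, since $\pi$ does not increase lengths). Second, the inclusion $\pi(B_\Sigma(q_0,\rho))\supset B_M(p,R_1)$ must be proved without circularity: you cannot bound the length of the lift $F_u\circ\gamma$ of a base geodesic by $\sup W_u$, which is the quantity you are estimating. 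The clean fix is an open--closed argument: as long as the lift stays in $B_\Sigma(q_0,\rho)$ one has $\|dF_u(\gamma')\|=W_u\circ\gamma\leq 2^{1/2}\mu_{\max}/\nu(q_0)$ (because $\|d\pi(v)\|\geq(\nu/\mu)\|v\|$ for $v$ tangent to $\Sigma$), so the lift of a geodesic of length less than $R_1:=2^{-1/2}\nu(q_0)\rho/\mu_{\max}$ cannot reach $\partial B_\Sigma(q_0,\rho)$. With these repairs your final chain of inequalities goes through and yields the stated conclusion.
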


\begin{proof}
Define $R$ as the minimum of $\frac{1}{2}d(p,\partial\Omega)$ and the injectivity radius of $\Omega$ at $p$. Since $\Omega$ is relatively compact in $M$, the injectivity radius is bounded below by a positive constant, so $R$ depends only on $d(p,\partial\Omega)$ and the geometry of $M$. We will work on the closed metric ball $\overline B_M(p,R)$, which is contained in $\Omega$ and obtain a suitable upper bound for $\|\nabla W\|$, where $W=W_u$. To this end, we will employ the orthonormal frame~\eqref{LocalFrame} in a local patch covering $\pi^{-1}(\overline B_M(p,R))$. Observe that $\overline B_M(p,R)$ is topologically a closed disk by assumption, whence there is a conformal parametrization of a neighborhood of $\overline B_M(p,R)$ in $M$. We will also assume the graph is parametrized by $(x,y)\mapsto(x,y,u(x,y))$, and define the (not necessarily orthonormal) frame 
	\begin{align*}
			e_1&=\partial_x+u_x\partial_z=\lambda E_1+\lambda \alpha E_3,\\
			e_2&=\partial_y+u_y\partial_z=\lambda E_2+\lambda \beta E_3,\\
			N&=\tfrac{-\alpha}{W}E_1+\tfrac{-\beta}{W}E_2+\tfrac{1}{W}E_3,
	\end{align*}
where $\alpha=\mu\left(\frac{u_x}{\lambda}-a\right)$ and $\beta=\mu\left(\frac{u_y}{\lambda}-b\right)$. Notice that $e_1$ and $e_2$ are tangent to the graph and $N$ is the upward-pointing unit normal. Differentiating $N$ with respect to $e_1$, it follows that $\overline\nabla_{e_1}N=U+V_1+V_2+V_3$, where
\begin{align*}
	U&=\left(-\bigl(\tfrac{\alpha}{W}\bigr)_x-\tfrac{\lambda_y}{\lambda}\tfrac{\beta}{W}\right)E_1+\left(-\bigl(\tfrac{\beta}{W}\bigl)_x+\tfrac{\lambda_y}{\lambda}\tfrac{\alpha}{W}\right)E_2+\bigl(\tfrac{1}{W}\bigr)_xE_3,\\
	V_1&=\tfrac{1}{W}\left(-\lambda\tau\alpha\beta E_1+\lambda\tau(\alpha^2-1)E_2+\lambda\tau\beta E_3\right),\\
	V_2&=\tfrac{-\alpha}{W}\tfrac{\mu_x}{\mu}\left(E_1+\alpha E_3\right),\\
	V_3&=\tfrac{-\alpha}{W}\tfrac{\mu_y}{\mu}\left(E_2+\beta E_3\right).
\end{align*}
Observe that $U$ contains the second derivatives of $u$, whereas $V_1$, $V_2$ and $V_3$ are lower-order terms. Cauchy--Schwarz inequality easily yields
\begin{equation}\label{thm:harnack:eqn1}
\|U\|^2\leq4\left(\|\overline{\nabla}_{e_1}N\|^2+\|V_1\|^2+\|V_2\|^2+\|V_3\|^2\right).
\end{equation}
Furthermore, we can estimate
\begin{equation}\label{thm:harnack:eqn2}
\begin{aligned}&\|U\|^2=\frac{\alpha_x^2+\beta_x^2}{W^4}+\left(\frac{\alpha_x\beta-\alpha\beta_x}{W^2}+\frac{\lambda_y}{\lambda}\right)^2-\frac{\lambda_y^2}{\lambda^2W^2}\geq \frac{\alpha_x^2+\beta_x^2}{W^4}-\frac{\lambda_y^2}{\lambda^2W^2},\\
&\|V_1\|^2\leq\lambda^2\tau^2W^2,\qquad\qquad\|V_2\|^2\leq\frac{\mu_x^2}{\mu^2}W^2,\qquad\qquad\|V_3\|^2\leq \frac{\mu_y^2}{\mu^2}W^2.
\end{aligned}\end{equation}
Plugging~\eqref{thm:harnack:eqn2} into~\eqref{thm:harnack:eqn1}, it follows that
\begin{equation}\label{thm:harnack:eqn3}
\frac{\alpha_x^2+\beta_x^2}{4W^4}\leq\|\overline\nabla_{e_1}N\|^2+\lambda^2W^2\left(\tau^2+\frac{\|\nabla\mu\|^2}{\mu^2}\right)+\frac{\lambda_y^2}{4W^2\lambda^2}.\end{equation}

Since the shape operator $A$ of the graph satisfies $Ae_1=-\overline\nabla_{e_1}N$, the curvature estimate given by Lemma~\ref{lem:curvature-estimate} yields the existence of $C_1>0$ (only depending on $d(p,\partial\Omega)$ and the geometry of $\pi^{-1}(\Omega(\delta))$) such that
\begin{equation}\label{thm:harnack:eqn4}\|\overline\nabla_{e_1}N\|^2=\|Ae_1\|^2\leq C_1\|e_1\|^2=C_1\lambda^2(1+\alpha^2)\leq C_1\lambda^2W^2
\end{equation}
on $B_M(p,R)$. By combining~\eqref{thm:harnack:eqn3} and~\eqref{thm:harnack:eqn4}, we find a constant $C_2>0$ (only depending on $d(p,\partial\Omega)$ and derivatives of $\tau$ and $\mu$) such that
\begin{equation}\label{thm:harnack:eqn5}
\frac{\alpha_x^2+\beta_x^2}{\lambda^2}\leq C_2W^6+\frac{\lambda_y^2W^2}{\lambda^4}.
\end{equation}
Similarly, we work out $\overline\nabla_{e_2}N$ to obtain the estimate
\begin{equation}\label{thm:harnack:eqn6}
\frac{\alpha_y^2+\beta_y^2}{\lambda^2}\leq C_2W^6+\frac{\lambda_x^2W^2}{\lambda^4}.
\end{equation}
Note that $\nabla W=\frac{\alpha\alpha_x+\beta\beta_x}{\lambda W}\frac{\partial_x}{\lambda}+\frac{\alpha\alpha_y+\beta\beta_y}{\lambda W}\frac{\partial_y}{\lambda}$, so Cauchy--Schwarz inequality along with~\eqref{thm:harnack:eqn5} and~\eqref{thm:harnack:eqn6} gives
\[\|\nabla W\|^2\leq\frac{\alpha_x^2+\beta_x^2+\alpha_y^2+\beta_y^2}{\lambda^2}\leq 2C_2W^6+\frac{\|\nabla\lambda\|^2W^2}{\lambda^2}\leq\left(2C_2+\frac{\|\nabla\lambda\|^2}{\lambda^2}\right) W^6,\]
(observe that $W^2\leq W^6$ because $W>1$). Since $\lambda^{-2}\|\nabla\lambda\|^2$ is bounded in $\overline B_M(p,R)$, it has a maximum. Define $I(p)$ as the infimum of these maxima for all conformal patches of $M$ covering $\overline B_M(p,R)$, which is continuous and only depends on $d(p,\partial\Omega)$ and the geometry of $M$. All in all, we conclude from~\eqref{thm:harnack:eqn6} that there exists $C_3$ depending on $C$, $\delta$, $\Lambda$, $d(p,\partial\Omega)$ and the geometry of $M$ such that $\|\nabla W\|\leq C_3W^3$.

Finally, let $\gamma:[0,R]\to M$ be a unit-speed geodesic of $M$ with $\gamma(0)=p$, and define the function $f(s)=W(\gamma(s))$. Therefore $f(0)=W(p)\leq C$ and 
\[f'(s)=\langle\nabla W,\gamma'(s)\rangle\leq\|\nabla W\|\leq C_3f(s)^3.\]
Integrating this differential inequality, we obtain $f(s)\leq 2C$ for $s\in[0,\frac{3}{8C^2C_3})$. This means that $W\leq 2C$ on $B_M(p,R')$, where $R'=\min(R,\frac{3}{8C^2C_3})$. 
\end{proof}
	
\emph{A-priori} $\mathcal{C}^0$-estimates yield \emph{a-priori} $\mathcal{C}^1$-estimates by Lemma~\ref{lem:interior-gradient-estimate} and Proposition~\ref{prop:harnack}, and this is enough to get $\mathcal{C}^\infty$-convergence: Ladyzhenskaya-Ural'tseva theory implies \emph{a-priori} $\mathcal{C}^{1,\alpha}$-estimates~\cite{LU}, and Schauder theory in turn implies \emph{a-priori} $\mathcal{C}^{2,\alpha}$-estimates~\cite{GT}; since the minimal surface equation is of second order, standard elliptic \textsc{pde} arguments also provide $\mathcal{C}^k$-estimates for all $k\in\N$. The classical Arzelà-Ascoli theorem yields the following convergence result.
	
\begin{theorem}[Compactness]\label{thm:compactness}
Let $\{u_n\}$ be a $\mathcal{C}^0$-uniformly bounded sequence of smooth solutions of the minimal surface equation in $\Omega$. There exists a subsequence of $\{u_n\}$ converging (in the $\mathcal{C}^k$-topology on compact subsets for all $k\in\N$) to a solution of the minimal surface equation in $\Omega$.
\end{theorem}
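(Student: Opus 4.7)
The plan is to promote the uniform $\mathcal{C}^0$-bound into uniform $\mathcal{C}^k$-bounds on compact subsets $K\subset\Omega$ for every $k\in\N$, and then apply Arzelà--Ascoli together with a diagonal argument. The Laplace--type structure of the minimal surface equation~\eqref{eq:H} on a Killing submersion means that once $\mathcal{C}^1$-bounds are available, the bootstrap from linear elliptic theory runs exactly as in the Euclidean case.

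Concretely, I would first exhaust $\Omega$ by an increasing sequence of relatively compact subdomains $K_j\Subset\Omega$ with $\bigcup_j K_j=\Omega$ and $d(K_j,\partial\Omega)\geq C_2^{(j)}>0$. Set $C_1=\sup_n\|u_n\|_{\mathcal{C}^0(\Omega)}<\infty$. Lemma~\ref{lem:interior-gradient-estimate} then yields, for each fixed $j$, a uniform bound $W_{u_n}\leq C(C_1,C_2^{(j)},\Lambda)$ at every point of $K_j$ and every $n$; equivalently, $\|\nabla u_n\|_{\mathcal{C}^0(K_j)}$ is uniformly bounded. (Proposition~\ref{prop:harnack} is not strictly needed for the compactness statement as stated, since Lemma~\ref{lem:interior-gradient-estimate} already gives the pointwise bound at every interior point; one can also invoke Proposition~\ref{prop:harnack} to spread such a bound along any fixed compact subset from a single good point.)

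With uniform $\mathcal{C}^1$-bounds on each $K_j$, the quasilinear elliptic PDE~\eqref{eq:H} has coefficients depending on $u_n$ and $\nabla u_n$ that lie in a compact set of values, uniformly in $n$. Ladyzhenskaya--Ural'tseva theory thus gives a uniform $\mathcal{C}^{1,\alpha}$-estimate on $K_j$; freezing coefficients and applying Schauder theory upgrades this to a uniform $\mathcal{C}^{2,\alpha}$-estimate on any $K_{j'}\Subset K_j$; and bootstrapping by differentiating the equation yields uniform $\mathcal{C}^{k,\alpha}$-estimates on $K_{j'}$ for every $k\in\N$, with bounds depending on $k$, $j$, $C_1$ and the ambient data on $\overline{\Omega(\delta)}$.

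Finally, Arzelà--Ascoli on $K_1$ gives a subsequence of $\{u_n\}$ converging in $\mathcal{C}^k(K_1)$ for every $k$; extracting a further subsequence on $K_2$, then on $K_3$, and so on, a standard diagonal selection produces a single subsequence (still denoted $\{u_n\}$) that converges in $\mathcal{C}^k$ on every compact subset of $\Omega$, for every $k\in\N$, to some $u\in\mathcal{C}^\infty(\Omega)$. Because convergence is in at least $\mathcal{C}^2$ on compact sets and each $u_n$ satisfies the second-order minimal surface equation associated to~\eqref{eq:H}, we may pass to the limit in the equation to conclude that $u$ is itself a smooth solution. The only delicate point is making sure the estimates in Lemma~\ref{lem:interior-gradient-estimate} and the subsequent elliptic bounds depend only on $C_1$, on $d(K_j,\partial\Omega)$, and on geometric data of $\pi^{-1}(\Omega(\delta))$ (not on $n$), which is exactly how they are stated; no substantive obstacle remains beyond this bookkeeping.
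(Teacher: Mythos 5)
Your proposal is correct and follows essentially the same route as the paper, which obtains the $\mathcal{C}^1$-estimates from Lemma~\ref{lem:interior-gradient-estimate} (and Proposition~\ref{prop:harnack}), then invokes Ladyzhenskaya--Ural'tseva, Schauder, and the elliptic bootstrap before applying Arzel\`a--Ascoli. Your observation that the interior gradient estimate alone already suffices for the $\mathcal{C}^1$-bound on each compact exhaustion piece is accurate and harmless.
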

	
For completeness, we give a proof of the Monotone Convergence Theorem. The sets $\mathcal U$ and $\mathcal V$ in the statement are typically called the \emph{convergence} and \emph{divergence} sets, respectively (not to be confused with the divergence lines we introduce in Section~\ref{sec:divergence-lines}). Notice that considering a subsequence is not necessary by monotonicity.

\begin{theorem}[Monotone Convergence]\label{thm:MonConT}
Let $\{u_n\}$ be a nondecreasing sequence of smooth solutions to the minimal surface equation in $\Omega$. There exists an open subset $\mathcal U\subset\Omega$ such that $\{u_n\}$ converges in $\mathcal U$ (in the ${\mathcal{C}}^k$-topology on compact subsets of $\mathcal U$ for all $k\in\N$) and diverges uniformly to $+\infty$ on compact subsets of $\mathcal{V}=\Omega\sm\mathcal{U}$.  	
\end{theorem}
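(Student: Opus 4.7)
The plan is to define the pointwise limit $u(p) = \lim_n u_n(p) \in (-\infty, +\infty]$ (which exists by monotonicity, with $u_n(p) \geq u_1(p)$), set $\mathcal U = \{p \in \Omega : u(p) < +\infty\}$ and $\mathcal V = \Omega \setminus \mathcal U$, and then prove the two halves of the statement separately.

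The divergence on $\mathcal V$ is the easier half. For each $m \in \N$ the set $A_{n,m} = \{p \in \Omega : u_n(p) > m\}$ is open by continuity of $u_n$ and nested, $A_{n,m} \subset A_{n+1,m}$, by monotonicity. Since $u_n(p) \to +\infty$ for $p \in \mathcal V$, we have $\mathcal V \subset \bigcup_n A_{n,m}$. For any compact $K \subset \mathcal V$, compactness yields some $n_0 = n_0(K,m)$ with $K \subset A_{n_0,m}$, so $u_n > m$ on $K$ for all $n \geq n_0$. This gives the required uniform divergence.

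For convergence on $\mathcal U$, I would reduce the task to proving that for each $p_0 \in \mathcal U$ there is a neighborhood $B$ of $p_0$ on which $\{u_n\}$ is uniformly $C^0$-bounded. Granted this, Lemma~\ref{lem:interior-gradient-estimate} yields uniform $C^1$-bounds on a slightly smaller ball, and the standard Ladyzhenskaya--Ural'tseva and Schauder estimates then promote these to uniform $\mathcal C^{k,\alpha}$-bounds; Theorem~\ref{thm:compactness} extracts a subsequence converging in $\mathcal C^k$ to a smooth solution of the MSE, and monotonicity upgrades this to convergence of the full sequence to $u$. In particular $B \subset \mathcal U$, so $\mathcal U$ is open and the convergence is in $\mathcal C^k_{\mathrm{loc}}(\mathcal U)$.

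The hard part will be the local $C^0$ upper bound at an arbitrary $p_0 \in \mathcal U$ (the lower bound is automatic from $u_n \geq u_1$). I plan to argue by contradiction: if no such bound exists, there are sequences $p_k \to p_0$ and $n_k \to \infty$ with $u_{n_k}(p_k) \to +\infty$ while $u_{n_k}(p_0) \to u(p_0)$ stays bounded. The minimal graphs $\Sigma_{n_k}$ are stable with uniformly bounded second fundamental form on $\pi^{-1}(\overline B_M(p_0, r))$ by Lemma~\ref{lem:curvature-estimate}, so the Uniform Graph Lemma adapted from~\cite{RST} produces a subsequential limit as a minimal lamination in a neighborhood of $(p_0, u(p_0))$. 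If the leaf through this point is transverse to the Killing direction $\xi$, then $\Sigma_{n_k}$ would eventually be uniform graphs over a fixed neighborhood of $p_0$ with uniformly bounded heights, contradicting $u_{n_k}(p_k) \to \infty$. If instead the leaf is vertical, then $\nu_{n_k}(p_0) \to 0$, and the monotonicity of $\{u_n\}$ combined with the maximum principle applied to the nonnegative differences $u_{n+1} - u_n$ should rule out the development of such a purely vertical sheet at a point where $u$ is finite. Reconciling the geometric compactness of stable minimal surfaces with the order relations of the scalar heights is the part that I expect to require the most care.
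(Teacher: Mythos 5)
Your overall skeleton (pointwise limit, $\mathcal U=\{u<+\infty\}$, the Dini-type argument for uniform divergence on compact subsets of $\mathcal V$, and the reduction of convergence to a local $\mathcal C^0$ bound followed by Lemma~\ref{lem:interior-gradient-estimate}, elliptic regularity and Theorem~\ref{thm:compactness}) is correct and agrees with the paper's. The genuine gap is exactly where you predict it: the ``vertical leaf'' case of your compactness argument is not proved. Your suggested tool --- the maximum principle applied to the nonnegative differences $u_{n+1}-u_n$ --- does not obviously close it: these differences satisfy a linear elliptic equation whose coefficients depend on the very gradients you are trying to control, so a Harnack inequality for them is circular, and the strong maximum principle alone only says each difference is positive or identically zero. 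A completion along your lines does exist (if the graphs converge near $(p_0,u(p_0))$ to a piece of vertical cylinder over a $\mu$-geodesic $L$, then on one fixed side of $L$ one finds points $q_j\to p_0$ and a constant $c>0$ with $u_{n_k}(q_j)\le u(p_0)-c$ for all large $k$, whence $u(q_j)\le u(p_0)-c$ by monotonicity, contradicting the lower semicontinuity of $u=\sup_n u_n$ at $p_0$), but as written the crucial case is only gestured at.

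The paper avoids this dichotomy entirely, and its route is worth comparing with yours. It normalizes the $u_n$ to be positive, applies the interior gradient estimate at the single point $p$ (using only that $\{u_n(p)\}$ is bounded) to get a uniform bound $W_{u_n}(p)\le C$, and then invokes Proposition~\ref{prop:harnack} --- which you never use --- to propagate this to $W_{u_n}\le 2C$ on a ball $B_M(p,R')$ whose radius is independent of $n$. A uniform gradient bound on a ball of definite size, together with boundedness at its center, immediately yields the local $\mathcal C^0$ bound, with no case analysis and no limit surfaces. In other words, in the paper the local $\mathcal C^0$ estimate is an \emph{output} of the gradient estimates rather than an input to them; this is the step you need to either adopt or replace by a fully written version of your lamination argument.
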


\begin{proof} 
We can assume that there exists $p\in\Omega$ such that $\left\{u_n\right\}$ is uniformly bounded, for otherwise the sequence clearly diverges to $+\infty$ on all compact subsets of $\Omega$, so the statement follows. In fact, it suffices to prove that $\{u_n\}$ converges on $B_M(p,R)$ for some $R$, where we can assume that the functions are positive after adding suitable constants. By Lemma~\ref{lem:interior-gradient-estimate}, there are constants $C_n$ such that $W_{u_n}(p)\leq C_n$, and Proposition~\ref{prop:harnack} implies that $W_{u_n}\leq 2C_n$ in $B_M(p,R_n)$, with $R_n$ depending on $u_n(p)$, $C_n$, $d(p,\partial \Omega)$ and the geometry of $\pi^{-1}(\Omega(\delta))$. Since $\{u_n(p)\}$ is bounded, $C_n$ and $R_n$ can be chosen not depending on $u_n(p)$, so we can assume that there are $C,R'>0$ depending only on $d(p,\partial \Omega)$ and the geometry of $\pi^{-1}(\Omega(\delta))$, such that $W_{u_n}\leq 2C$ on $B_M(p,R')$ for all $n$. This means that $\|Gu_n\|$ (and hence $\|\nabla u_n\|$ and $u_n$) is uniformly bounded on $B_M(p,R')$. Theorem~\ref{thm:compactness} therefore implies that $\{u_n\}$ admits a subsequence converging to a solution of the minimal surface equation on $B_M(p,R')$. Since the sequence is monotone, we deduce that $\{u_n\}$ itself converges to such solution. The fact that the divergence is uniform on compacts of $\mathcal V$ is also a consequence of the monotonicity.
\end{proof}
	
We will also need a couple of technical results in our arguments. The first one is a lemma that ensures that infinite values can only be prescribed along a $\mu$-geodesic. The proof is standard and will be ommited (it is similar to that of~\cite[Thm.~3.3]{RST} substituting geodesics with $\mu$-geodesics just because $\mu$-geodesics are the projections of vertical minimal cylinders in the general case).

\begin{lemma}\label{lemma:boundary-structure}
Let $\Sigma$ be a minimal graph over $\Omega$ given by $u\in\mathcal C^\infty(\Omega)$ with respect to $F_0$. Assume that $\gamma\subset\partial\Omega$ is a regular open arc such that $\lim\{u(p_n)\}=\pm\infty$ for all sequences $\{p_n\}$ of points in $\Omega$ converging to any $p\in\gamma$. Then $\gamma$ is $\mu$-geodesic and the angle function of $\Sigma$ goes to $0$ along any sequence approaching a point of $\gamma$.
\end{lemma}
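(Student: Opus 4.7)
The plan is to combine a vertical-translation blow-down with the compactness and curvature estimates from Section~\ref{sec:convergence} and the characterization of minimal vertical cylinders given by Proposition~\ref{prop:H-invariant}. Fix $p\in\gamma$ and a sequence $\{p_n\}\subset\Omega$ with $p_n\to p$, and assume without loss of generality that $u\to+\infty$ along sequences approaching $\gamma$ (the $-\infty$ case is symmetric). Set $t_n=u(p_n)\to+\infty$ and consider the vertically translated surfaces $\Sigma_n=\phi_{-t_n}(\Sigma)$, which are the graphs of $u_n=u-t_n$. By construction $F_0(p_n)\in\Sigma_n$ and $F_0(p_n)\to F_0(p)$ in $\E$, so the sequence $\Sigma_n$ has a natural ``basepoint'' converging in the ambient space.

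The analytic input I need is that, in any fixed relatively compact geodesic ball $B\subset\E$ centered at $F_0(p)$, the surfaces $\Sigma_n$ have uniformly bounded second fundamental form. Indeed, the boundary components of $\Sigma_n$ over $\partial\Omega\setminus\gamma$ lie at heights $u-t_n$ tending to $-\infty$ uniformly on compact subsets of $\Omega\setminus\gamma$ (since $u$ is locally bounded there and $t_n\to+\infty$), while the pieces over $\gamma$ lie at heights tending to $+\infty$; hence $\partial\Sigma_n\cap B=\emptyset$ for $n$ large. Since graphs are stable, Lemma~\ref{lem:curvature-estimate} then provides a uniform curvature bound on $\Sigma_n\cap B$. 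Standard compactness for stable minimal surfaces of locally bounded curvature and without boundary yields, along a subsequence, smooth convergence $\Sigma_n\to\Sigma_\infty$ on $B$, where $\Sigma_\infty$ is a minimal surface through $F_0(p)$.

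To identify $\Sigma_\infty$, observe that for any $q\in\Omega\setminus\overline\gamma$ one has $u_n(q)\to-\infty$, which forces $\Sigma_n$ over a neighborhood of $q$ to exit every fixed ball of $\E$. Thus $\pi(\Sigma_\infty\cap B)\subseteq\overline\gamma$, and since $\Sigma_\infty$ is a smooth $2$-manifold projecting into the $1$-dimensional set $\overline\gamma$, the map $\pi|_{\Sigma_\infty}$ has rank at most one and, by a constant-rank argument, $\Sigma_\infty$ coincides near $F_0(p)$ with the vertical cylinder over an arc of $\gamma$ through $p$. Proposition~\ref{prop:H-invariant} then forces this arc to be a $\mu$-geodesic; since $p$ was arbitrary, $\gamma$ is $\mu$-geodesic. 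Finally, the angle function of $\Sigma_\infty$ vanishes identically because $\Sigma_\infty$ is vertical; as vertical translations are isometries commuting with $\xi$, the angle function of $\Sigma_n$ at $F_0(p_n)$ equals $\nu(p_n)$, and smooth convergence yields $\nu(p_n)\to 0$.

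The step I expect to be the main obstacle is the uniform curvature bound on $\Sigma_n\cap B$: one has to exploit that $u$ blows up along the \emph{whole} arc $\gamma$ (not just at the isolated point $p$) to guarantee that, for $n$ large, the boundary components of $\Sigma_n$ truly avoid $B$ and that Lemma~\ref{lem:curvature-estimate} applies as an interior estimate. Once this is granted, the identification of $\Sigma_\infty$ as a vertical cylinder and the application of Proposition~\ref{prop:H-invariant} are formal.
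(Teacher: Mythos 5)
Your argument is correct and is essentially the proof the paper has in mind: the authors omit the proof precisely because it is the standard blow-down of \cite[Thm.~3.3]{RST}, i.e.\ translate by $-u(p_n)$, extract a curvature-controlled limit through $F_0(p)$, show it projects into $\gamma$ and is therefore an open piece of the vertical cylinder $\pi^{-1}(\gamma)$, and invoke Proposition~\ref{prop:H-invariant} to conclude that $\gamma$ is a $\mu$-geodesic and that the limiting normals are horizontal. The one clause to make explicit is that the uniform bound on $|A|$ must come from the \emph{intrinsic} version of Lemma~\ref{lem:curvature-estimate} (the bound in terms of $d_M(\pi(q),\partial\Omega)$ degenerates as $\pi(q)\to\gamma$), which is legitimate exactly because, as you note, $\Sigma_n$ has no metric-completion boundary over the interior of $\gamma$ — the graph escapes to infinite height there — so $d_{\Sigma_n}(q,\partial\Sigma_n)$ is bounded below for $q$ over a small neighborhood of $p$.
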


The second one is a consequence of the curvature estimates also known as the Uniform Graph Lemma, which we will state for graphs in Killing submersions. Since graphs admit curvature estimates only depending on $\Lambda$, $\delta$ and the distance to the boundary (see Lemma~\ref{lem:curvature-estimate}), we can rewrite~\cite[Prop.~4.3]{RST} in a more convenient way. Indeed, in the proof given in~\cite{RST} it is shown that such a graph has uniformly bounded Euclidean second fundamental form in harmonic coordinates, so we can also use~\cite[Lemma~4.1.1]{PerezRos} to ensure that the growth of the graph is under control as stated in item (2) of the following lemma:

\begin{lemma}[{\cite[Prop.~4.3]{RST}}]\label{lem:uniform-graph-lemma}
Let $\Sigma$ be a minimal graph over $\Omega$ and let $q\in\Sigma$. There exist constants $a,\rho,\rho_0>0$ (depending on $\delta$, $\Lambda$, $d(q,\partial\Sigma)$ and on a positive lower bound for the injectivity radius of $\Omega$) and an open neighborhood $U_q\subset\E$ of $q$ that can be parametrized by harmonic coordinates, such that:
\begin{enumerate}[label=\emph{(\arabic*)}]
	\item A subset $\Sigma_q\subset\Sigma\cap U_q$ containing $q$ is an Euclidean graph (in the harmonic coordinates) over the disk of $D(0,\rho)\subset T_q\Sigma$ of Euclidean radius $\rho$.
	\item If $f\in\mathcal C^\infty(D(0,\rho))$ is the function that defines the Euclidean graph, then $f(v)\leq a|v|^2$ for all $v\in D(0,\rho)$, where $|\cdot|$ is the Euclidean norm in $T_q\Sigma$.
	\item The subset $\Sigma_q$ contains the geodesic disk $B_\Sigma(q,\rho_0)$.
\end{enumerate}
\end{lemma}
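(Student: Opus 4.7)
The plan is to combine the curvature estimate of Lemma \ref{lem:curvature-estimate} with standard results on harmonic coordinates in manifolds of bounded geometry and the Euclidean uniform graph lemma of \cite[Lemma~4.1.1]{PerezRos}. First I would apply Lemma \ref{lem:curvature-estimate} on a subset of $\Sigma$ where $d(\cdot,\partial\Sigma)\geq \tfrac12 d(q,\partial\Sigma)$ to obtain a uniform bound $|A|\leq A_0$ on the shape operator, with $A_0$ depending only on $\delta$, $\Lambda$ and $d(q,\partial\Sigma)$. Since $\E$ has sectional curvature bounded by $\Lambda$ on the relatively compact set $\pi^{-1}(\Omega(\delta))$ and the injectivity radius of $\Omega$ is bounded below, standard Jost--Karcher-type results produce a geodesic ball $U_q=B_\E(q,r_0)$, with $r_0$ depending only on $\Lambda$ and on these injectivity radii, in which one can build harmonic coordinates whose metric coefficients $g_{ij}$ are uniformly $\mathcal{C}^{1,\alpha}$-close (after rescaling) to the Euclidean ones.

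For item (1), the bound on $|A|$ together with the uniform bounds on the Christoffel symbols in the harmonic chart translates into a uniform bound on the Euclidean second fundamental form of $\Sigma\cap U_q$ relative to the flat metric of the chart. The standard Euclidean uniform graph lemma then yields a radius $\rho>0$, depending on this extrinsic bound, such that a subset $\Sigma_q\subset\Sigma\cap U_q$ containing $q$ is the Euclidean graph of a smooth function $f$ over the disk $D(0,\rho)\subset T_q\Sigma$. For item (2), since $f(0)=0$ and $\nabla f(0)=0$ (the graph is taken over its own tangent plane), the uniform bound on the Euclidean Hessian of $f$ produced by the bound on the Euclidean second fundamental form integrates along radial segments in $D(0,\rho)$ to yield the quadratic estimate $f(v)\leq a|v|^2$; this is precisely the content of \cite[Lemma~4.1.1]{PerezRos}. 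For item (3), item (2) provides a uniform bound on $\|\nabla f\|$ on $D(0,\rho)$, and the harmonic-chart metric is uniformly comparable to the Euclidean one; consequently the intrinsic distance $d_\Sigma$ on $\Sigma_q$ is comparable to the Euclidean distance on $D(0,\rho)$, so there exists $\rho_0>0$ (depending on $\rho$, $a$ and the comparability constants) such that every $p\in\Sigma$ with $d_\Sigma(q,p)<\rho_0$ projects to a point of $D(0,\rho)$ and therefore lies in $\Sigma_q$.

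The main obstacle, in my view, is the uniform control of the harmonic chart: one must verify that $r_0$ and the $\mathcal{C}^{1,\alpha}$-bounds on the metric coefficients in the chart depend only on the stated geometric data ($\Lambda$, the injectivity radius of $\Omega$, and through it of $\E$), so that all estimates can be made uniform over the family of graphs considered. This is classical in the theory of manifolds of bounded geometry but is the least routine step. Once the harmonic coordinates are in place with the required uniform control, items (1)--(3) follow from essentially Euclidean computations involving the smooth graph function $f$.
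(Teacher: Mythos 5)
Your proposal is correct and follows essentially the same route as the paper, which does not write out a proof but justifies the statement by combining the curvature estimate of Lemma~\ref{lem:curvature-estimate} with the harmonic-coordinate argument in the proof of \cite[Prop.~4.3]{RST} (yielding a uniform bound on the Euclidean second fundamental form in the chart) and then invoking \cite[Lemma~4.1.1]{PerezRos} for the quadratic growth bound in item (2). Your write-up simply makes explicit the uniformity of the harmonic chart and the derivation of item (3), which the paper leaves implicit.
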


\section{The Dirichlet problem with finite boundary values}\label{sec:dirichlet}

We will now show the existence and uniqueness of minimal graphs in $\E$ with finite boundary values over a relatively compact domain $\Omega\subset M$ with piecewise $\mu$-convex boundary. If $\Omega$ is simply connected with convex corners, this reduces to a Plateau problem where Meeks--Yau solution applies~\cite{MY1}, while for a general domain we will apply Perron's method. Some related results on existence of minimal graphs in Killing submersions can be found in~\cite{ADR,DD,DDR,KM}.

\begin{definition}\label{def:nitsche}
A {\em Nitsche contour} in $\mathbb{E}$ is a pair $(\Omega,\Gamma)$, where $\Omega\subset M$ is a relatively compact domain and $\Gamma$ is the union of finitely many piece regular curves, each of which admits a one-to-one continuous parametrization $\gamma:[a,b]\to\Gamma$ satisfying the following two conditions:
\begin{enumerate}[label=(\alph*)]
	\item There exists a partition $a=t_1<s_1\leq t_2<\ldots\leq t_r<s_r\leq t_{r+1}=b$ such that $\gamma(a)=\gamma(b)$ and, for any $j\in\{1,\ldots,r\}$, the component $\gamma_{|[t_j,s_j]}$ projects one-to-one by $\pi:{\E}\to M$, and $\gamma_{|[s_j,t_{j+1}]}$ is a vertical segment.

	\item The curve $\Gamma$ is a graph over $\partial\Omega$ except possibly a at the points of the form $\pi(t_i)$, which will be called the \emph{vertices} of $\Omega$.
\end{enumerate}
\end{definition}
	
It is important to recall that the fibers of $\pi$ are assumed to have infinite length, and graphs over $\Omega$ will be parametrized with respect to the fixed smooth section $F_0$. We start by proving a maximum principle.
	
\begin{proposition}\label{prop:uniqueness}
Let $(\Omega,\Gamma)$ and $(\Omega,\Gamma')$  be two Nitsche contours in $\E$ with the same domain $\Omega$ and the same vertex set $V\subset\partial\Omega$. Let $u,v\in\mathcal{C}^\infty(\Omega)$ such that
\begin{itemize}
	\item[i)] the mean curvatures of their graphs satisfy the inequality $H_u\geq H_v$,
	\item[ii)] both $u$ and $v$ extend continuously to $\overline\Omega\sm V$ giving rise to surfaces with boundaries $\Gamma$ and $\Gamma'$, respectively.
\end{itemize}
If $u\leq v$ in $\partial\Omega\sm V$, then $u\leq v$ in $\Omega$.
\end{proposition}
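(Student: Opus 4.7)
The plan is to argue by contradiction by testing the divergence theorem on a superlevel set of $w=u-v$, combining the interior sign given by the mean-curvature comparison (Lemma~\ref{lemma:H}) with the boundary sign given by the factorization identity (Lemma~\ref{lemma:factorization}). The key vector field is
\[Y=\mu^{2}\left(\frac{Gu}{W_{u}}-\frac{Gv}{W_{v}}\right)=\mu\bigl(\pi_{*}(N_{u})-\pi_{*}(N_{v})\bigr)\]
on $\Omega$. Lemma~\ref{lemma:H} yields $\mathrm{div}(Y)=2\mu(H_{u}-H_{v})\geq 0$, while the estimate $\|\pi_{*}(N_{u})\|\leq\|N_{u}\|=1$ coming from the Riemannian submersion property gives $\|Y\|\leq 2\mu$, so $Y$ remains uniformly bounded up to the vertex set $V$.

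Suppose for contradiction that $w>0$ somewhere in $\Omega$. Since $w\leq 0$ on $\partial\Omega\setminus V$ and $w$ extends continuously to $\overline\Omega\setminus V$, we have $\sup_{\Omega}w>0$, and by Sard's theorem I can pick a regular value $\epsilon\in(0,\sup_{\Omega}w)$ of $w$. The open set $\Omega_{\epsilon}=\{w>\epsilon\}$ has topological frontier inside $\Omega$ equal to the smooth $1$-manifold $L_{\epsilon}=\{w=\epsilon\}$, and its closure in $\overline{\Omega}$ can only touch $\partial\Omega$ at vertices of $V$ (any limit point of $\Omega_\epsilon$ in $\partial\Omega\setminus V$ would have $w\geq\epsilon$ by continuity, contradicting $w\leq 0$ there).

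To handle $V$, I excise small metric disks $B_{M}(p,\delta)$ around each $p\in V$ (choosing $\delta$, again by Sard, so that $\partial B_{M}(p,\delta)$ is transverse to $L_{\epsilon}$) and apply the divergence theorem on $\Omega_{\epsilon}^{\delta}=\Omega_{\epsilon}\setminus\bigcup_{p\in V}\overline{B_{M}(p,\delta)}$:
\[\int_{L_{\epsilon}\cap\Omega_{\epsilon}^{\delta}}\langle Y,\eta\rangle\,d\ell+\sum_{p\in V}\int_{\partial B_{M}(p,\delta)\cap\overline{\Omega_{\epsilon}}}\langle Y,\eta\rangle\,d\ell=\int_{\Omega_{\epsilon}^{\delta}}\mathrm{div}(Y)\,dA\geq 0,\]
where $\eta$ is the outward unit normal. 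The vertex integrals are $O(\delta)$ by $\|Y\|\leq 2\mu_{\max}$ and vanish as $\delta\to 0$. On $L_{\epsilon}$ the outward unit normal is $\eta=-\nabla w/\|\nabla w\|$ (well-defined since $\epsilon$ is regular); using $\nabla w=Gu-Gv$ and Lemma~\ref{lemma:factorization} gives
\[\langle Y,\nabla w\rangle=\mu^{2}\left\langle\frac{Gu}{W_{u}}-\frac{Gv}{W_{v}},\,Gu-Gv\right\rangle=\frac{W_{u}+W_{v}}{2}\|N_{u}-N_{v}\|^{2}\geq 0,\]
so $\langle Y,\eta\rangle\leq 0$ pointwise on $L_{\epsilon}$.

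Taking $\delta\to 0$ therefore forces $\int_{L_{\epsilon}}\langle Y,\eta\rangle\,d\ell=0$, and the pointwise inequality $\langle Y,\eta\rangle\leq 0$ then implies $\|N_{u}-N_{v}\|\equiv 0$ and hence $\nabla u=\nabla v$ on every component of $L_{\epsilon}$, contradicting the regularity of $\epsilon$. The main obstacle is the vertex set $V$: because the Nitsche contours contain vertical segments over each vertex, $w$ may be discontinuous at points of $V$, so $\overline{\Omega_{\epsilon}}$ can touch $\partial\Omega$ and a naive application of the divergence theorem is not available. The cutoff argument works precisely because $Y$ is \emph{a~priori} uniformly bounded on $\overline{\Omega}$, which makes the integrals on small circles around vertices negligible in the limit.
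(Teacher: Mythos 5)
Your proof is correct and follows essentially the same route as the paper's: a divergence-theorem argument on a superlevel set of $w=u-v$, with small disks excised around the vertex set $V$ (where the boundedness $\|Y\|\le 2\mu$ kills the excised contributions) and the sign on the level set supplied by Lemma~\ref{lemma:factorization}. The only cosmetic difference is that you achieve transversality by choosing a regular value $\epsilon$ of $w$ via Sard, whereas the paper perturbs $v$ by a small constant, and you reach the contradiction through the equality case of Lemma~\ref{lemma:factorization} rather than through strict negativity of the limiting flux; these are equivalent.
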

	
\begin{proof}
Reasoning by contradiction, consider $w=u-v$ and assume that $ U=\{p\in\Omega:w(p)>0\}$ is not empty. By adding a small enough positive constant to $v$ so the condition $ U\neq\emptyset$ is preserved, we can assume, without loss of generality, that $\nabla w$ does not vanish along $\partial U$ and $u<v$ in $\partial\Omega\sm V$. Therefore, $\partial U$ is a family $\{C_\alpha\}$ of regular curves without intersection points. The maximum principle prevents the existence of any connected component of $U$ whose boundary is contained in the interior of $\Omega$. Moreover, the conditions $u<v$ in $\partial\Omega\smallsetminus V$ and $\nabla w\neq 0$ in $\partial U$ ensure that each $C_\alpha$ starts and ends in the vertex set $V\subset \partial\Omega$.

We will consider a connected component of $\widetilde{U}\subset U$ and, given $\varepsilon>0$, we will denote by $\widetilde{U}_\varepsilon$ the set of points of $\widetilde{U}$ which are not in the geodesic balls of radius $\varepsilon$ with centers in $V$. For $\varepsilon>0$ small enough, the discussion in the previous paragraph allows us to write $\partial\widetilde{U}_\varepsilon=\Gamma^1_\varepsilon\cup \Gamma^2_\varepsilon$, where $\Gamma^1_\varepsilon\subset \partial\widetilde{U}$ consists of finitely many curves and $\Gamma^2_\varepsilon$ is constituted by arcs of geodesic circles centered at the points of $V$.

Since the functions $u$ and $v$ satisfy $H_u\geq H_v$ in $\Omega$, we get from Lemma \ref{lemma:H} that $\mathrm{div}\frac{\mu^2\, Gu}{W_u}\geq \mathrm{div}\frac{\mu^2\, Gv}{W_v}$ in $\Omega$. The divergence theorem yields
\begin{equation}\label{thm:eq1}
	0\leq\int_{\widetilde{U}_\varepsilon}\mathrm{div}\left(\frac{\mu^2\, Gu}{W_u}-\frac{\mu^2\, Gv}{W_v}\right)=\int_{\partial\widetilde{U}_\varepsilon}\mu^2\left\langle\frac{Gu}{W_u}-\frac{Gv}{W_v},\eta\right\rangle,
\end{equation}
where $\eta$ is the outer unit conormal vector field to $\widetilde{U}_\varepsilon$ along its boundary. On the other hand, Lemma \ref{lemma:factorization} guarantees that
\begin{equation}\label{thm:eq2}
	\left\langle\frac{Gu}{W_u}-\frac{Gv}{W_v},\nabla w\right\rangle=\frac{1}{2\mu^2}(W_u+W_v)\|N_u-N_v\|>0\quad\text{on }\Gamma^1_\varepsilon,
\end{equation}
where $N_u$ and $N_v$ stand for the downward unit vector fields, normal to $F_u$ and $F_v$, respectively. The last strict inequality holds because $\nabla w\neq 0$ along $\Gamma^1_\varepsilon$. Nevertheless, since $w=0$ in $\Gamma^1_\varepsilon$ and $w>0$ in $U_\varepsilon$, the vector $\nabla w$ is a negative multiple of $\eta$ along $\Gamma^1_\varepsilon$. Hence the functions
\begin{equation}\label{thm:eq3}
\alpha_i(\varepsilon)=\int_{\Gamma^i_\varepsilon}\mu^2\left\langle\frac{Gu}{W_u}-\frac{Gv}{W_v},\eta\right\rangle,\qquad i\in\{1,2\},
\end{equation}
satisfy $\lim_{\varepsilon\to 0}\alpha_1(\varepsilon)<0$ by Equation~\eqref{thm:eq2}, whereas $\lim_{\varepsilon\to0}\alpha_2(\epsilon)=0$ since the integrand in $\alpha_2(\epsilon)$ is bounded by Cauchy--Schwarz inequality and the length of $\Gamma^2_\varepsilon$ tends to zero as $\varepsilon\to 0$. Consequently, $\alpha_1(\varepsilon)+\alpha_2(\varepsilon)<0$ for some small $\varepsilon$, contradicting the fact that $\alpha_1(\varepsilon)+\alpha_2(\varepsilon)\geq 0$ by Equation~\eqref{thm:eq1}.
\end{proof}

\subsection{The simply connected case}
In order to prove existence and uniqueness of the Plateau problem over a simply connected domain, we have to understand first which domains are admissible. Motivated by the mean-convex property in the general existence result of Meeks and Yau~\cite{MY1}, we give the following definition.

\begin{definition}
We say that a Nitsche contour $(\Omega,\Gamma)$ is  \emph{admissible} if $\Omega$ is a simply-connected relatively compact set satisfying the following properties.
\begin{enumerate}[label=(\alph*)]
	\item Each component of $\partial\Omega$ is a regular curve with non-negative $\mu$-geodesic curvature with respect to the inner conormal to $\Omega$.
	\item The interior angle at each vertex of $\partial\Omega$  is at most $\pi$.
\end{enumerate}
\end{definition}

Indeed, it follows from Proposition~\ref{prop:H-invariant} that an admissible Nitsche contour satisfies the assumption of~\cite[Thm.~1]{MY1}. Notice that in~\cite{MY1} the sign of the mean curvature is opposite with respect to ours.

\begin{theorem}\label{thm:existence}
Let $(\Omega,\Gamma)$ be an admissible Nitsche contour in $\E$. There exists a unique minimal surface $\Sigma\subset\pi^{-1}(\overline\Omega)$ with boundary $\Gamma$. Moreover, $\Sigma$ is topologically a disk whose interior is a graph over $\Omega$.
\end{theorem}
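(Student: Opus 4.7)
The plan is to realize $\Sigma$ as a Meeks--Yau least-area disk in a mean-convex compact box around $\Gamma$, upgrade it to a Killing graph via barrier arguments against minimal vertical cylinders, and then apply Proposition~\ref{prop:uniqueness} for uniqueness.

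First I would use Lemma~\ref{lemma:minimal-sections} to assume that the zero section $F_0$ is minimal on a neighborhood of $\overline\Omega$, and choose $T>0$ large enough that $\Gamma$ is contained in the open slab $\{|d|<T\}$, where $d$ denotes signed fibre distance to $F_0$. The compact set
\[
	B_T=\pi^{-1}(\overline\Omega)\cap\{|d|\le T\}
\]
has piecewise smooth boundary, and it is mean-convex in the sense of Meeks--Yau: its lateral face $\pi^{-1}(\partial\Omega)\cap\{|d|\le T\}$ has non-negative mean curvature with respect to its inner conormal by Proposition~\ref{prop:H-invariant} (applied to the $\mu$-convex components of $\partial\Omega$), its top and bottom caps $\phi_{\pm T}(F_0(\overline\Omega))$ are minimal since $F_0$ is and $\phi_{\pm T}$ is an isometry, and the interior dihedral angle at each edge is less than $\pi$ because $F_0$ is a graph and hence nowhere tangent to $\xi$. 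Since $\Omega$ is simply connected, $B_T$ is homeomorphic to $\overline\Omega\times[-T,T]$ and in particular simply connected, so $\Gamma\subset\partial B_T$ is null-homotopic in $B_T$. Meeks--Yau's theorem~\cite{MY1} then produces an embedded least-area minimal disk $\Sigma\subset B_T$ with $\partial\Sigma=\Gamma$.

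To show $\Sigma$ is a graph over $\Omega$, I would argue that $\pi|_\Sigma$ has no critical point in $\mathrm{int}(\Sigma)$. If it had one at some $q$, the tangent plane $T_q\Sigma$ would be vertical and, by Proposition~\ref{prop:H-invariant} applied to the $\mu$-geodesic through $\pi(q)$ tangent to $\pi_*(T_q\Sigma)$ (or the open book of Corollary~\ref{coro:open-book-decomposition}), there would exist a minimal vertical cylinder $C$ tangent to $\Sigma$ at $q$. The tangency principle together with connectedness of $\Sigma$ forces $\Sigma\subset C$, which contradicts the fact that $\pi(\partial\Sigma)=\partial\Omega$ is not contained in the single $\mu$-geodesic $\pi(C)$. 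Hence $\pi|_{\mathrm{int}(\Sigma)}\colon\mathrm{int}(\Sigma)\to\Omega$ is a proper local diffeomorphism onto a simply connected surface with connected domain, hence a diffeomorphism. Therefore $\Sigma$ is the Killing graph of a smooth $u\in\mathcal{C}^\infty(\Omega)$ whose continuous extension to $\overline\Omega\setminus V$ reproduces the prescribed boundary data, and in particular $\Sigma$ is topologically a disk.

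The same barrier argument, together with the maximum principle against the boundary cylinder $\pi^{-1}(\partial\Omega)$ to ensure $\mathrm{int}(\Sigma')\subset\pi^{-1}(\Omega)$, shows that any other minimal surface $\Sigma'\subset\pi^{-1}(\overline\Omega)$ with $\partial\Sigma'=\Gamma$ is also a graph of some $v\in\mathcal{C}^\infty(\Omega)$ extending continuously to $\overline\Omega\setminus V$ with the same boundary data, and Proposition~\ref{prop:uniqueness} applied twice gives $u=v$, whence $\Sigma=\Sigma'$. The main obstacle I anticipate is making rigorous the mean-convexity of $B_T$ at its dihedral edges with the sign conventions required by Meeks--Yau (a short computation in the local frame~\eqref{LocalFrame}); a secondary technical delicacy is invoking the tangency principle between $\Sigma$ and the barrier cylinder $C$, which should exploit the open book foliation of Corollary~\ref{coro:open-book-decomposition} to place $\Sigma$ locally on one side of $C$ before applying the strong maximum principle.
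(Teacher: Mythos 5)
There is a genuine gap in your graphicality step. The assertion that ``the tangency principle together with connectedness of $\Sigma$ forces $\Sigma\subset C$'' is false: the strong maximum principle for minimal surfaces only applies when one surface lies locally on one side of the other, whereas two minimal surfaces tangent at an interior point generically \emph{cross} there, their intersection consisting of at least four arcs emanating from the tangency point (in $\R^3$, a helicoid is tangent to a vertical plane along its axis and crosses it everywhere). Nothing places $\Sigma$ locally on one side of the tangent cylinder $C$, and the open book foliation of Corollary~\ref{coro:open-book-decomposition} does not help: its leaves all contain the fiber $\pi^{-1}(\{p\})$, and a surface tangent to one leaf will in general cross it. To salvage an interior-tangency argument you would need a Rad\'{o}-type count — follow the $\geq 4$ arcs of $\Sigma\cap C$ to $\Gamma$, exclude loops by the maximum principle, and contradict an upper bound on $\#(\Gamma\cap C)$ coming from the $\mu$-convexity of $\partial\Omega$ and the graphical structure of $\Gamma$ — which is considerably more delicate than what you wrote, especially at the vertical segments and at points where $\pi(C)$ meets a $\mu$-geodesic piece of $\partial\Omega$ non-transversally.

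The paper avoids this entirely. After obtaining the Meeks--Yau disk (your construction of the mean-convex box $B_T$ is fine and matches the paper's implicit use of Proposition~\ref{prop:H-invariant} to verify the hypotheses of~\cite{MY1}), it proves that \emph{every} minimal surface $\Sigma\subset\pi^{-1}(\overline\Omega)$ with boundary $\Gamma$ is a graph by a sandwich argument: deform the vertical segments of $\Gamma$ slightly upwards to get contours $\Gamma_n$ that project one-to-one onto $\partial\Omega$, whose Plateau solutions are graphs lying above $\Sigma$ by the maximum principle; let $\Gamma_n\downarrow\Gamma$ and use the Monotone Convergence Theorem~\ref{thm:MonConT} to produce a graph $\Sigma_+$ with boundary $\Gamma$ lying above $\Sigma$; build $\Sigma_-$ below $\Sigma$ symmetrically; then Proposition~\ref{prop:uniqueness} gives $\Sigma_-=\Sigma_+$, forcing $\Sigma=\Sigma_+$. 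You should either adopt this deformation argument or carry out the intersection-counting argument in full; as written, the key step of your proof does not go through.
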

	
\begin{proof}
The existence a minimal disk in $\pi^{-1}(\Omega)$ with boundary $\Gamma$ is guaranteed by~\cite[Thm.~1]{MY1}. To prove uniqueness, it suffices to check that every minimal surface $\Sigma\subset\pi^{-1}(\Omega)$ with boundary $\Gamma$ is a graph, and then apply Proposition~\ref{prop:uniqueness}. 

If the Nitsche contour does not contain vertical segments, this assertion follows easily from the maximum principle, so we will assume that there are vertical segments. A slight deformation of these segments gives a new boundary $\Gamma'$ above $\Gamma$ that projects one-to-one onto $\partial\Omega$ (as in Figure~\ref{img:deformation}, center). The unique solution $\Sigma'$ to the Plateau problem in $\pi^{-1}(\Omega)$ with boundary $\Gamma'$ is a graph and must lie above $\Sigma$ due to the maximum principle again. If we consider a decreasing sequence $\{\Gamma_n\}$ of such deformed curves (i.e., $\Gamma_n$ lies above $\Gamma_{n+1}$ for every $n\in\N$) tending to the original Nitsche contour $\Gamma$, the corresponding solutions $\Sigma_n$ tend to a solution $\Sigma_+$ with boundary $\Gamma$ (note that this sequence is decreasing and uniformly bounded on $\Omega$ so Theorem~\ref{thm:MonConT} applies). As all the surfaces $\Sigma_n$ lie above $\Sigma$, so does $\Sigma_+$. 
\begin{figure}
	\centering\includegraphics[width=0.95\textwidth]{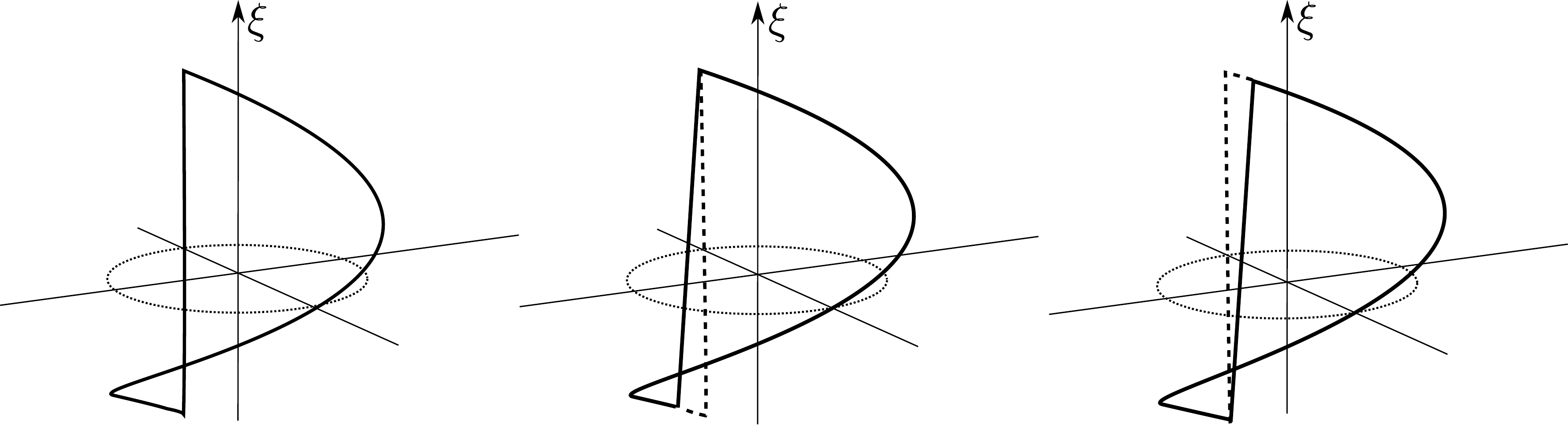}
	\caption{Deformation of a Nitsche graph (left) from above (center) and from below (right).}\label{img:deformation}
\end{figure}
Likewise, it is possible to deform the boundary $\Gamma$ so the new boundary lies below the original one and construct a graph $\Sigma_-$ with boundary $\Gamma$ and lying under $\Sigma$ (see Figure~\ref{img:deformation}, right). Proposition~\ref{prop:uniqueness} implies that $\Sigma_-=\Sigma_+$. Since $\Sigma$ lies between $\Sigma_+$ and $\Sigma_-$, we conclude that $\Sigma=\Sigma_+=\Sigma_-$ is a graph.
\end{proof}
	
\subsection{The general case}\label{subsec:perron}
We will use the Perron process to extend Theorem~\ref{thm:existence} to domains that are not necessarily simply connected or whose boundaries have reentrant corners. This method is rather well known (e.g. it was applied originally by Jenkins and Serrin~\cite{JS}) so we will just sketch it here in the Killing-submersion setting, for the sake of completeness. We will essentially follow Sa Earp and Toubiana's approach~\cite{ST1,ST3}, see also~\cite{Ng,NST}. Our goal is to solve the Dirichlet problem
	\begin{equation}
		\label{equation:dirichlet}
		P(\Omega,f):\left\{\begin{array}{l}
			H_u=0 \text{ in }\Omega,\\ 
			u|_{\partial\Omega}=f, 
		\end{array}\right.
	\end{equation} 
where $f$ is a continuous function.

Given $u\in\mathcal C^0(\Omega)$ and $U\subset\Omega$ a small closed convex disk, we will denote by $\tilde{u}_U$ the unique minimal graph over $U$ that solves the Dirichlet problem on $U$ with the same values as $u$ on $\partial U$, which exists by Theorem~\ref{thm:existence} via approximating $u|_{\partial U}$ by functions in $\mathcal C^1(\partial U)$. We also define $M_{U,u}\in\mathcal C^0(\overline\Omega)$ as
\[M_{U,u}(p)=
	\begin{cases}
		u(p), & \text{if } p\in \overline{\Omega}\sm U,\\
		\tilde{u}_U(p), & \text{if } p\in U.
	\end{cases}\] 
We say that $u\in\mathcal{C}^0(\overline{\Omega})$ is a \emph{subsolution} (resp. \emph{supersolution}) for $P(\Omega,f)$ if for any  small closed  disk $U\subset \Omega$, we have $u\leq M_{U,u}$ (resp.\ $u\geq M_{U,u}$),
and $u|_{\partial \Omega}\leq f$ (resp.\ $u|_{\partial \Omega}\geq f$). Due to the ellipticity of the mean curvature equation, it easily follows that $u\in\mathcal{C}^2(\Omega)$ is a subsolution (resp.\ supersolution) if and only if $H_u\geq 0$ (resp.\ $H_u\leq 0$). Consequently, a solution $u\in\mathcal{C}^2(\Omega)$ is both a subsolution and supersolution. This fact will be used later to obtain subsolutions.

We also need to bring up the notion of barrier. We say that $p_0\in\partial\Omega$ admits an \emph{upper barrier} (resp.\ \emph{lower barrier}) for $P(\Omega,f)$ if for any constant $M_0>0$ and any $k\in\mathbb N$, there exist an open neighborhood $V_k$ of $p_0$ in $M$ and a function $\omega^+_k$ (resp.\ $\omega^-_k$) of class $\mathcal{C}^2(V_k\cap\Omega)\cap\mathcal{C}^0(\overline{V_k\cap \Omega})$ such that
\begin{enumerate}[label=(\roman*)]
	\item $\omega^+_k\geq f$ (resp.\ $\omega^-_k\leq f$) on $\partial \Omega\cap V_k$,
	
	\item $\omega^+_k\geq M_0$ (resp.\ $\omega^-_k\leq -M_0$) on $\Omega\cap \partial V_k$,
	
	\item $H_{\omega^+_k}\leq 0$ (resp.\ $H_{\omega^-_k}\geq 0$) on $\Omega\cap V_k$,
		
	\item $\lim_{k\to\infty}\omega^+_k(p_0)=f(p_0)$
		(resp.  $\lim_{k\to\infty}\omega^-_k(p_0)=f(p_0)$).
\end{enumerate}
This is motivated by the following well known result.
	
\begin{lemma}[Perron Process]\label{lemma:perron}
Let $\Omega\subset M$ be an open domain with piecewise regular boundary and assume that $f:\partial\Omega\to\R$ is continuous on each component of $\partial\Omega$ and has left and right limits at each vertex of $\Omega$. Assume that $P(\Omega,f)$ has a supersolution $\phi$ and let $\mathcal{S}_{\phi}$ the set of subsolutions $\varphi$ of $P(\Omega,f)$ such that $\varphi\leq\phi$.
\begin{enumerate}[label=\emph{(\arabic*)}]
	\item  If ${\mathcal S}_\phi\neq\emptyset$, then the function $u(p)=\sup\{v(p):v\in {\mathcal S}_\phi\}$ is of class $\mathcal C^2(\Omega)$ and satisfies the equation $H_u=0$ in $\Omega$.
	\item If $\Omega$ is bounded and $\partial\Omega$ admits upper and lower barriers at some regular point $p_0\in\partial\Omega$ for the problem $P(\Omega,f)$, then the above solution $u$ extends continuously at $p_0$ by setting $u(p_0)=f(p_0)$.
\end{enumerate}
\end{lemma}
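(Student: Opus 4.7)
My approach is the classical Perron process, adapted to the minimal surface equation on Killing graphs. The whole argument rests on two closure properties of $\mathcal{S}_\phi$: first, it is closed under taking finite pointwise maxima, a formal consequence of the lifting definition of subsolution together with the monotonicity of $M_{U,\cdot}$; and second, for any small closed convex disk $U\subset\Omega$ and $v\in\mathcal{S}_\phi$, the lift $M_{U,v}$ again belongs to $\mathcal{S}_\phi$. The latter uses Theorem~\ref{thm:existence} to produce $M_{U,v}$ as a minimal graph over $U$ (a small closed convex disk being an admissible Nitsche contour) and Proposition~\ref{prop:uniqueness} to compare $M_{U,v}$ with $M_{U,\phi}\leq\phi$ on $U$. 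Analogous local comparisons show that lifting preserves the subsolution inequality at other small disks $U'\subset\Omega$ which may cross $\partial U$.

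For part (1), I fix $p\in\Omega$ and a small closed convex disk $U\subset\Omega$ containing $p$ in its interior. Picking $v_n\in\mathcal{S}_\phi$ with $v_n(p)\to u(p)$ and setting $w_n=M_{U,\max(v_1,\dots,v_n)}\in\mathcal{S}_\phi$, I obtain an increasing sequence of smooth minimal graphs on $U$, uniformly bounded above by $\phi$, with $w_n(p)\to u(p)<+\infty$. Consequently the divergence set of the Monotone Convergence Theorem~\ref{thm:MonConT} is empty, and $\{w_n\}$ converges in $\mathcal{C}^\infty_{\mathrm{loc}}(U)$ to a minimal graph $\tilde u\leq u$ with $\tilde u(p)=u(p)$. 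To prove $\tilde u\equiv u$ on $U$, I argue by contradiction: if $\tilde u(q)<u(q)$ for some $q\in U$, take $v'_n\in\mathcal{S}_\phi$ with $v'_n(q)\to u(q)$ and repeat the construction with the enlarged sequence to obtain a minimal graph $\tilde u'\geq\tilde u$ on $U$ satisfying $\tilde u'(p)=\tilde u(p)$ and $\tilde u'(q)>\tilde u(q)$. The nonnegative difference $\tilde u'-\tilde u$ satisfies a linear uniformly elliptic equation (via Lemma~\ref{lemma:factorization} or a direct linearization along a path of graphs) and vanishes at the interior point $p$, contradicting the strong maximum principle. Thus $u=\tilde u$ is smooth and solves $H_u=0$ on $U$, and the arbitrariness of $p$ gives (1).

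For part (2), I control $u$ near $p_0$ using the barriers. Given $\epsilon>0$, choose $k$ with $\omega^+_k(p_0)\leq f(p_0)+\epsilon$ and $M_0>\sup_{\overline\Omega}|\phi|$; every $v\in\mathcal{S}_\phi$ then satisfies $v\leq f\leq\omega^+_k$ on $\partial\Omega\cap V_k$ and $v\leq\phi\leq M_0\leq\omega^+_k$ on $\Omega\cap\partial V_k$, so Proposition~\ref{prop:uniqueness} yields $v\leq\omega^+_k$ on $V_k\cap\Omega$. Taking the supremum and letting $\epsilon\to 0$ gives $\limsup_{p\to p_0}u(p)\leq f(p_0)$. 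For the lower bound, choose a constant $C$ with $-C<\inf_{\partial\Omega}f$ and $M_0>C$, and define $\tilde\omega^-_k=\max(\omega^-_k,-C)$ on $V_k\cap\Omega$, extended by $-C$ on $\Omega\setminus V_k$. This is a continuous global subsolution with $\tilde\omega^-_k\leq f$ on $\partial\Omega$ and $\tilde\omega^-_k\leq\phi$ on $\Omega$ (the latter by Proposition~\ref{prop:uniqueness} on $V_k\cap\Omega$), so $\tilde\omega^-_k\in\mathcal{S}_\phi$ and $u\geq\tilde\omega^-_k$; letting first $p\to p_0$ and then $k\to\infty$ yields $\liminf_{p\to p_0}u(p)\geq f(p_0)$.

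The main technical delicacies in this program are (i) the verification of the Perron closure properties at the usual level of generality, namely for continuous (not necessarily smooth) sub- and supersolutions and for small disks $U'$ that may cross $\partial U$, and (ii) the extension of Proposition~\ref{prop:uniqueness} from smooth to merely continuous sub- and supersolutions. Both are classical features of the Perron method and follow in our setting from localized iterations of Proposition~\ref{prop:uniqueness} together with the existence result Theorem~\ref{thm:existence}, in the spirit of~\cite{ST1,ST3}.
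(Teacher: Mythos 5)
The paper does not actually prove Lemma~\ref{lemma:perron}: it is stated as a ``well known result'' and the details are delegated to the classical references \cite{ST1,ST3} (Sa Earp--Toubiana), so there is no in-paper argument to compare against. Your proposal is the standard Perron argument and it is correct in outline. Part (1) is the usual scheme: closure of $\mathcal S_\phi$ under finite maxima and under the lift $M_{U,\cdot}$, a monotone increasing sequence of lifted subsolutions converging (via Theorem~\ref{thm:MonConT}, applied on the interior of $U$ where the $w_n$ are genuinely smooth minimal graphs) to a minimal graph touching $u$ at $p$, and the strong maximum principle for the difference of two solutions to upgrade the touching point to identity on a neighborhood. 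Part (2) is the usual barrier comparison; your construction of the global subsolution $\max(\omega_k^-,-C)$ extended by the constant $-C$ is sound because constants are minimal graphs here ($F_0$ is taken minimal and vertical translations are isometries), and the continuity of the gluing is guaranteed by $\omega_k^-\leq-M_0<-C$ on $\Omega\cap\partial V_k$. The two technical points you flag at the end --- that the lift of a subsolution remains a subsolution on small disks crossing $\partial U$, and that the comparison principle of Proposition~\ref{prop:uniqueness} must be extended from smooth to merely continuous sub/supersolutions via localized lifting --- are precisely the details that the paper also leaves to \cite{ST1,ST3}; a fully self-contained proof would have to write them out, but they are standard and do not hide any obstruction specific to the Killing-submersion setting (the only inputs are Theorem~\ref{thm:existence} on small convex disks and the smooth comparison principle, both available here).
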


We will now prove the existence result in $\mathbb{E}$.

\begin{theorem}\label{thm:generalized-existence}
Let $\Omega\subset M$ be a relatively compact domain with piecewise regular boundary $\partial\Omega=\cup_{i\in I}C_i$, where each $C_i$ is either a $\mu$-geodesic arc (possibly closed) or a $\mu$-convex curve. Let $f_i\in\mathcal{C}^0(C_i)$ be bounded functions. There exists a unique minimal solution $u$ on $\Omega$ such that $u=f_i$ on the interior of each $C_i$.
\end{theorem}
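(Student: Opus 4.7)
The plan is to combine the Perron process (Lemma \ref{lemma:perron}) with the maximum principle (Proposition \ref{prop:uniqueness}) and a local barrier construction at the regular boundary points. For \emph{uniqueness}, let $u_1,u_2$ be two solutions. Both extend continuously to the interior of each $C_i$ with value $f_i$, so taking $V$ to be the (finite) set of corners of $\partial\Omega$, each graph is a Nitsche contour over $\Omega$ with vertex set $V$. Since $u_1=u_2$ on $\partial\Omega\setminus V$ and $H_{u_1}=H_{u_2}=0$, Proposition \ref{prop:uniqueness} applied twice (interchanging the roles of $u_1$ and $u_2$) gives $u_1=u_2$ on $\Omega$.

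For \emph{existence}, I would first apply Lemma \ref{lemma:minimal-sections} to choose the zero section $F_0$ to be minimal on a neighborhood of $\overline\Omega$. With this choice, every constant function $c\in\R$ is a solution of the minimal surface equation, because its graph $F_c=\phi_c\circ F_0$ is the image of the minimal section $F_0$ under the isometry $\phi_c$. Set $M_0=\sup_i\|f_i\|_\infty<\infty$, which is finite by assumption. The constant $\phi\equiv M_0$ is then a supersolution of $P(\Omega,f)$ (it is a solution with $\phi\geq f_i$ on every $C_i$) and $\varphi\equiv -M_0$ is a subsolution satisfying $\varphi\leq\phi$, so the Perron family $\mathcal{S}_\phi$ is non-empty. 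Lemma \ref{lemma:perron}(1) then yields a function $u\in\mathcal{C}^2(\Omega)$ with $H_u=0$ and $|u|\leq M_0$.

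The remaining (and main) step is to show that $u$ attains the prescribed boundary values at every regular point $p_0$ lying in the interior of some $C_i$. By Lemma \ref{lemma:perron}(2) it suffices to produce upper and lower barriers at $p_0$. This is a purely local construction: using the Scherk-type minimal graphs provided by Proposition \ref{prop:scherk}, one builds, on a small neighborhood $V_k$ of $p_0$, a minimal graph over $V_k\cap\Omega$ whose trace on $C_i\cap V_k$ approximates $f_i$ uniformly (using continuity of $f_i$ at $p_0$) and which diverges to $+\infty$ on $\Omega\cap\partial V_k$; translating vertically and shrinking $V_k$ as $k\to\infty$ yields the family $\omega_k^+$ satisfying conditions (i)--(iv), and reflecting gives $\omega_k^-$. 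The hypothesis that each $C_i$ is either $\mu$-geodesic or $\mu$-convex is precisely what allows the Scherk-type surface to sit on the correct side of $C_i$, because $\mu$-convexity is equivalent to mean convexity of the vertical cylinder $\pi^{-1}(C_i)$ by Proposition \ref{prop:H-invariant}.

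The principal obstacle is the Scherk-type barrier itself. In the homogeneous models treated in the literature one invokes Douglas' criterion on a minimal annulus to get the barrier, but in a general Killing submersion the vertical cylinders need not be area-minimizing; this is exactly the difficulty circumvented in Proposition \ref{prop:scherk} by using the open-book decomposition of Corollary \ref{coro:open-book-decomposition} together with the Meeks--Yau solution of the Plateau problem to produce local Scherk disks instead of annuli. Granting this local construction, the Perron argument above together with the uniqueness from Proposition \ref{prop:uniqueness} concludes the proof.
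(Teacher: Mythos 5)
Your overall skeleton (a minimal zero section from Lemma~\ref{lemma:minimal-sections} making the constants $\pm M_0$ super/subsolutions, the Perron process of Lemma~\ref{lemma:perron}, and uniqueness via a double application of Proposition~\ref{prop:uniqueness}) is exactly the paper's strategy, and those parts are correct. The problem is the step you yourself identify as the main one: the barrier construction. You propose to take as upper barrier a minimal graph over $V_k\cap\Omega$ with values close to $f_i(p_0)$ on $C_i\cap V_k$ and asymptotic value $+\infty$ on $\Omega\cap\partial V_k$, citing Proposition~\ref{prop:scherk}. That proposition produces the \emph{opposite} configuration: value $+\infty$ on the single side $\ell_1$ tangent to the boundary and value $0$ on the two interior sides $\ell_2\cup\ell_3$. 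If you instead try to put $+\infty$ on the two interior sides of a small geodesic triangle (so that $\Omega\cap\partial V_k\subset\ell_2\cup\ell_3$), such a graph does not exist: $\ell_2$ and $\ell_3$ meet at a convex corner, which is forbidden by Proposition~\ref{prop:admissibility}(1), and the flux inequality $2\alpha(\mathcal P)<\gamma(\mathcal P)$ fails for the triangle by the $\mu$-triangle inequality. One could salvage an infinite-valued barrier by using a single $\mu$-geodesic chord cutting off a bigon at $p_0$ (this is the content of the Remark following Proposition~\ref{prop:scherk}), but that is a forward reference to Section~\ref{sec:local-barriers} and, more to the point, entirely unnecessary.

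The paper's barrier needs no infinite boundary values at all: condition (ii) in the definition of a barrier only asks for $\omega_k^+\geq M_0$ on $\Omega\cap\partial V_k$, a \emph{finite} bound. So one simply takes the small $\mu$-geodesic triangle $T$ with $\ell_1$ tangent to $C_i$ at $p_0$ and solves, via Theorem~\ref{thm:existence} (Meeks--Yau on an admissible Nitsche contour), the Dirichlet problem with the finite values $f_i(p_0)+\frac1k$ on $\ell_1$ and $\max\{M_0,f_i(p_0)+\frac1k\}$ on $\ell_2\cup\ell_3$; the maximum principle (using that $F_0$ is minimal, so constants are solutions) gives conditions (i)--(iv), and the lower barrier is symmetric. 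Scherk-type graphs with infinite values are reserved in the paper for a different purpose, namely controlling sequences of graphs at strictly $\mu$-convex boundary points in Proposition~\ref{prop:boundary-value}. Replace your barrier paragraph with this finite-valued construction and the proof goes through.
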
	

\begin{proof}
Observe that this Dirichlet problem has subsolutions and supersolutions because the functions $f_i$ are bounded and there are minimal sections over $\Omega$ by Lemma~\ref{lemma:minimal-sections}. Therefore, it suffices to show the existence of barriers at any point $p_0$ in the interior of an arc $C_i\subset\partial\Omega$ where $f_i\in\mathcal C^0(C_i)$ has been assigned. 

Let $T$ be a small $\mu$-geodesic triangle contained in a convex neighborhood of $p_0$ with sides $\ell_1$, $\ell_2$ and $\ell_3$ such that $\ell_1$ is tangent to $C_i$ at $p_0$ and $T\cap \Omega$ is a neighborhood of $p_0$ in $\Omega$ (if $\partial\Omega$ is $\mu$-geodesic near $p_0$, then $\ell_1$ can contain an open subarc of $\partial\Omega$). We can choose $T$ sufficiently small such that there is a neighborhood $V_k$, $k\in\mathbb{N}$, of $p_0$ with boundary $\ell_2\cup\ell_3$ and a $\mu$-convex arc in $M$ joining the endpoints of $\ell_1$. We can also assume that $\partial\Omega\cap V_k$ a connected subarc of $C$ and $\Omega\cap\partial V_k$ is a connected subset of $\ell_2\cup\ell_3$. Theorem~\ref{thm:existence} guarantees the existence of a minimal graph $\omega^+_k$ over $T$ with boundary values $f_i(p_0)+\frac{1}{k}$ on $\ell_1$ and $\max\{M_0,f_i(p_0)+\tfrac{1}{k}\}$ on $\ell_2\cup\ell_3$. As $F_0$ can be chosen as a minimal section, the maximum principle implies that $\omega_k^+$ induces a function in $\mathcal C^2(\Omega\cap V_k)\cap\mathcal C^0(\overline{\Omega\cap V_k})$ by restriction that becomes an upper barrier at $p_0$. Likewise, we can define a lower barrier $\omega_k^-$ using the minimal graph over $T$ with boundary values $f_i(p_0)-\frac{1}{k}$ on $\ell_1$ and $\min\{-M_0,f_i(p_0)-\frac{1}{k}\}$ on $\ell_2\cup\ell_3$.
\end{proof}

\section{Local barriers}\label{sec:local-barriers}
	
We would like to obtain Scherk-type minimal surfaces on \emph{small} $\mu$-geodesic triangles $T\subset M$ that will serve as local barriers in our Jenkins--Serrin constructions. We will denote by $p_1,p_2,p_3$ the vertices of $T$ and by $\ell_1$, $\ell_2$ and $\ell_3$ the corresponding opposite geodesic sides. Corollary~\ref{coro:open-book-decomposition} yields the existence of a relatively compact open neighborhood $U_i$ of $p_i$, where there is an open book decomposition by $\mu$-geodesics with binding at $p_i$. We will say that $T$ is \emph{small} whenever $T\subset U$, where $U=U_1\cap U_2\cap U_3$, and all interior angles of $T$ are at most $\pi$ (notice that such a triangle $T$ exists around any point $p\in M$ as long as we choose $p_1,p_2,p_3$ in a totally $\mu$-convex neighborhood of $p$).
	
\begin{proposition}\label{prop:scherk} 
There exists a minimal graph over $T$ with zero value (with respect to $F_0$) on $\ell_2\cup\ell_3$ and asymptotic value $\pm\infty$ on $\ell_1$. Moreover, the tangent planes of $\Sigma$ become vertical when approaching any interior point of $\ell_1$.
\end{proposition}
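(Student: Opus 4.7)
The plan is to realize the Scherk-type barrier as the monotone limit of finite Dirichlet solutions with large data on $\ell_1$. For each $n \in \N$, I would apply Theorem~\ref{thm:generalized-existence} to the domain $T$ (which is relatively compact with piecewise $\mu$-geodesic, hence $\mu$-convex, boundary) to produce $u_n \in \mathcal{C}^\infty(T)$ solving the minimal surface equation with continuous boundary values $+n$ on the interior of $\ell_1$ and $0$ on the interiors of $\ell_2$ and $\ell_3$ (the data may jump at the vertices $p_2,p_3$, which is permitted). The maximum principle in Proposition~\ref{prop:uniqueness} gives the monotonicity $0 \leq u_n \leq u_{n+1}$, and the Monotone Convergence Theorem~\ref{thm:MonConT} then splits $T = \mathcal{U} \sqcup \mathcal{V}$ into open sets with $u_n \to u$ in $\mathcal{C}^k_{\mathrm{loc}}$ on $\mathcal{U}$ (for a minimal solution $u$) and $u_n \to +\infty$ uniformly on compacts of $\mathcal{V}$.

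The crux is to prove $\mathcal{V} \cap T^\circ = \emptyset$. Suppose for contradiction that $\mathcal{V}$ has interior in $T$. Standard gradient estimates near $\ell_2 \cup \ell_3$, where the data are zero and continuous, show that $\overline{\mathcal{V}}$ cannot touch the interiors of $\ell_2, \ell_3$. By the structure analysis underlying Lemma~\ref{lemma:boundary-structure} (and the internal-divergence version to be developed in Section~\ref{sec:divergence-lines}), every arc of $\partial \mathcal{V}$ lying in $T^\circ$ is a $\mu$-geodesic along which the angle function of $u_n$ tends to zero. Pick a component $\mathcal{V}_0$ and a bounding interior $\mu$-geodesic $\delta \subset \partial \mathcal{V}_0$; its endpoints must lie on $\ell_1 \cup \{p_2,p_3\}$. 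Integrating the minimal graph equation $\mathrm{div}(\mu^2 Gu_n/W_{u_n}) = 0$ over $\mathcal{V}_0$, together with the pointwise bound $|\mu^2\langle Gu_n/W_{u_n},\eta\rangle| \leq \mu$ (saturated in the limit along a $\mu$-geodesic divergence arc), yields in the limit an equality between the $\mu$-length of $\partial \mathcal{V}_0 \cap \ell_1$ and the $\mu$-length of $\partial \mathcal{V}_0 \cap T^\circ$. However, by the smallness hypothesis $T \subset U_1 \cap U_2 \cap U_3$ furnished by the open-book decomposition (Corollary~\ref{coro:open-book-decomposition}), $T$ lies in a totally $\mu$-convex neighborhood in which the $\mu$-geodesic connecting two points is unique; any $\mu$-geodesic in $T^\circ$ joining two points of $\ell_1$ is therefore strictly longer in the $\mu$-metric than the corresponding $\ell_1$-segment. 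This contradicts the flux equality, so $\mathcal{V} \cap T^\circ = \emptyset$.

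With $\mathcal{U} \supset T^\circ$, the limit $u$ is a minimal graph over $T$; monotonicity together with $u_n \equiv n$ on $\ell_1$ yields $u(q) \to +\infty$ as $q$ approaches any interior point of $\ell_1$. The verticality of tangent planes near interior points of $\ell_1$ is then immediate from Lemma~\ref{lemma:boundary-structure} applied to $u$. The $-\infty$ case is handled analogously by solving the Dirichlet problem with boundary data $-n$ on $\ell_1$ and taking the monotone decreasing limit; since $u\mapsto -u$ is not in general a symmetry of the minimal surface equation in a Killing submersion with nonzero bundle curvature, the argument is rerun afresh rather than obtained by reflection.

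The main obstacle is the exclusion of an interior divergence region. In more explicit ambients such as $\R^3$ or $\mathbb{H}^2\times\R$, one can write down an explicit Scherk-type solution and bypass this step; in a general Killing submersion no such formula is available and the argument must be carried out intrinsically. This is precisely what forces the smallness hypothesis on $T$: only when $T$ sits inside the intersection of the three open-book neighborhoods is the $\mu$-geometry regular enough that uniqueness of connecting $\mu$-geodesics (and thus the strict length comparison) is guaranteed, ruling out interior divergence.
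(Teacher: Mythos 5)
There is a genuine gap at the heart of your argument: you never establish that $\{u_n\}$ is uniformly bounded on compact subsets of the \emph{interior} of $T$, i.e.\ you never rule out $\mathcal V=T$. Your contradiction argument begins by choosing ``a bounding interior $\mu$-geodesic $\delta\subset\partial\mathcal V_0$'', but if the whole sequence diverges to $+\infty$ uniformly on compacts of $T$ (which is perfectly compatible with monotonicity and with the boundary data, since $u_n$ takes the value $n$ on $\ell_1$), then $\partial\mathcal V_0\cap T^\circ=\emptyset$ and there is no such $\delta$, no divergence line, and no flux identity to contradict. The auxiliary claim that ``standard gradient estimates near $\ell_2\cup\ell_3$, where the data are zero'' prevent $\overline{\mathcal V}$ from touching those sides is likewise unsupported: every boundary estimate available in the paper (the barriers $\omega_k^\pm$ of Theorem~\ref{thm:generalized-existence}, or Proposition~\ref{prop:boundary-value}) presupposes a uniform interior $\mathcal C^0$ bound near the boundary point, which is precisely what is in question; and the only other candidate barrier is the Scherk graph you are trying to construct. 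The paper's entire proof is devoted to supplying exactly this missing upper bound: it stacks Meeks--Yau area-minimizing disks $D_k$ over a slightly enlarged triangle $\widetilde T$, shows each $D_k$ lies above every $\Sigma_n$, and then uses the open-book decomposition of Corollary~\ref{coro:open-book-decomposition} (with binding over the external point $\tilde p_3$) to prove that the projections $\pi(D_k)$ exhaust $T$, which yields the uniform local bound from scratch. Your proposal assumes the conclusion of that construction rather than proving it.

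Two secondary remarks. First, your use of the divergence-line machinery (properly embedded lines, exclusion of endpoints in the interior of boundary arcs) is a forward reference to Section~\ref{sec:divergence-lines}; it is not actually circular, since Lemmas~\ref{lem:divline-properly-embedded} and~\ref{lem:divline-boundary-values} rest only on the uniform graph lemma, but if the divergence set is nonempty and proper then those lemmas plus uniqueness of $\mu$-geodesics in the totally convex neighborhood already force $\mathcal D=\emptyset$ directly (any divergence line would have to join two vertices of $T$ and hence coincide with a side), so your flux computation is superfluous where it applies and unavailable where it is needed. Second, the flux identity you assert is only an inequality, $\Length_\mu(\partial\mathcal V_0\cap T^\circ)\leq\Length_\mu(\partial\mathcal V_0\cap\ell_1)$, since along $\ell_1$ one only has $|\Flux(u_n,\cdot)|\leq\Length_\mu(\cdot)$; and the statement that a second $\mu$-geodesic joining two points of $\ell_1$ is ``strictly longer'' is vacuous in a neighborhood where connecting $\mu$-geodesics are unique.
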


\begin{proof}
Assume that the boundary value on $\ell_1$ is $+\infty$, since the case of $-\infty$ is analogous. For any $n$, the existence of a minimal solution $u_n$ on $T$ with value $0$ on $\ell_2\cup\ell_3$, and value $n$ on $\ell_1$ is guaranteed by Theorem~\ref{thm:existence}. By Proposition~\ref{prop:uniqueness}, the sequence $\left\{u_n\right\}$ is nondecreasing and positive. Hence, to show that the limit $u=\lim_{n\to\infty}u_n$ exists, it is sufficient to prove that $\left\{u_n\right\}$ is uniformly bounded on any compact subset $K\subset\overline{T}\sm\ell_1$ and then apply Theorem~\ref{thm:MonConT}. Lemma~\ref{lemma:boundary-structure} implies the last assertion of the statement.

Denote by $\Sigma_n$ the graph of $u_n$. We will avoid the customary use of Douglas criterion by building a sequence of minimal disks $\left\{D_k\right\}$ such that
\begin{enumerate}
	\item $D_k$ is above $\Sigma_n$ for all $n$ and $k$, and
	\item the family of the horizontal projections $\left\{\pi(D_k)\right\}$ exhausts $T$ as $k\to\infty$.
\end{enumerate} 
The existence of $D_k$ guarantees that $\{u_n\}$ is uniformly bounded on each compact subset $K\subset T$ since property (2) implies that $K\subset\pi(D_k)$ for some $k$.

The sequence $D_k$ will be obtained inductively, but we need to set some notation first. For each $\varepsilon>0$, let $\widetilde T$ be the $\mu$-geodesic triangle with vertices $p_1$, $\tilde{p}_2$ and $\tilde{p}_3$, such that $\tilde{p}_2$ and $\tilde{p}_3$ belong to the $\mu$-geodesic containing $\ell_1$ at distance $\varepsilon$ from $p_2$ and $p_3$, respectively. We will denote by $\tilde{\ell}_1$, $\tilde{\ell}_2$ and $\tilde{\ell}_3$ the sides of $\widetilde{T}$ opposite to $p_1$, $\tilde{p}_2$ and $\tilde{p}_3$, respectively (see Figure~\ref{fig:scherk}, left). We will assume that $\varepsilon$ is small enough such that $\widetilde T\subset U$. To avoid a cumbersome notation, and only throughout this proof, we will consider the usual trivialization $F:U\times\R\to\pi^{-1}(U)$ given by $F(p,t)=\phi_t(F_0(p))$, where $\{\phi_t\}_{t\in\R}$ is the $1$-parameter group of isometries associated to $\xi$. We will work on $U\times\R$ with the pullback metric by $F$ in the sequel, so the minimality of $F_0|_U$ means that $U\times\{t_0\}$ is minimal for all $t_0\in\R$.

\begin{figure}
	\centering\includegraphics[width=\textwidth]{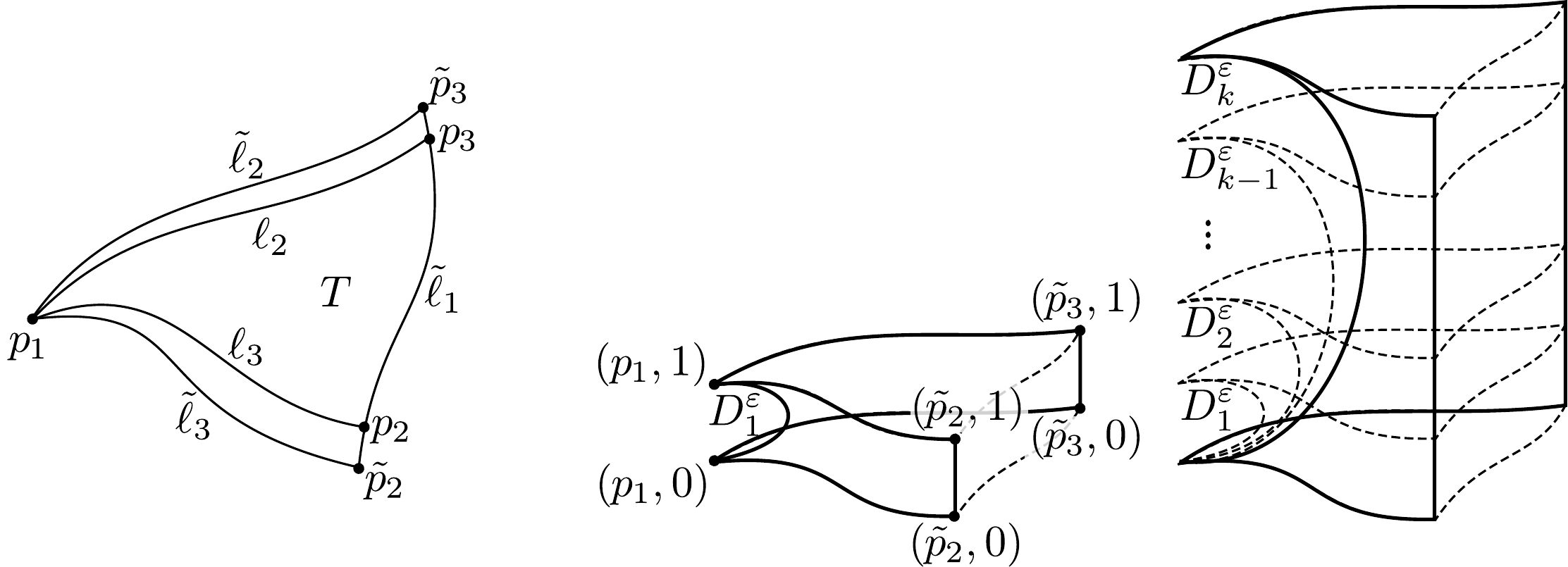}
	\caption{The $\mu$-geodesic triangles $T$ and $\widetilde T$ in the proof of Proposition~\ref{prop:scherk}, the initial minimal disk $D_1^\varepsilon\subset\widetilde T\times\R$ and the sequence of disjoint minimal disks $D_k^\varepsilon$ constructed by recurrence.}
	\label{fig:scherk}
\end{figure}

Let $M_1=\widetilde{T}\times[0,1]$ be the smooth compact $3$-manifold with boundary $\widetilde{T}\times\{0,1\}\cup (\ell_1\cup\tilde\ell_2\cup\tilde\ell_3)\times [0,1]$. Since $\partial M_1$ consists of five minimal smooth pieces meeting at angles less than $\pi$,~\cite[Thm.~1]{MY1} gives an (area-minimizing) smooth minimal disk $D^\varepsilon_1$ with boundary $(\tilde{\ell}_2\cup \tilde{\ell}_3)\times\{0,1\}\cup(\{\tilde{p}_2,\tilde{p}_3\}\times[0,1])$ that divides $T\times\R$ in two simply connected components (see Figure~\ref{fig:scherk}, center). The closure of the component whose boundary does not contain $\{p_1\}\times[0,1]$ will be denoted by $M_1^+$.

Given $k\geq 2$, we define by recurrence $M_k=M_{k-1}^+\cap(\widetilde{T}\times[0,k])$ so that $\partial M_k=(\widetilde{T}\times\{0,k\})\cup D_{k-1}^\varepsilon\cup(\tilde\ell_1\times[0,k])\cup((\tilde\ell_2\cup\tilde\ell_3)\times[k-1,k])$. Again, by~\cite[Thm.~1]{MY1}, we find a minimal surface $D_k^\varepsilon\subset M_k$ with boundary $(\tilde{\ell}_2\cup \tilde{\ell}_3)\times\{0,k\}\cup(\{\tilde{p}_2,\tilde{p}_3\}\times[0,k])$. We also define $M_k^+$ as the closure of the component of $(\widetilde T\times\R)\sm D_k^\varepsilon$ whose boundary does not contain $\{p_1\}\times[0,k]$. Notice also that $D^\varepsilon_k$ and $D^\varepsilon_{k-1}$ do not have interior contact points (and they are not tangent at any point of their common boundary) by the maximum principle since $D^\varepsilon_{k-1}$ acts as a barrier in the construction of $D^\varepsilon_{k}$ (see Figure~\ref{fig:scherk}, right). Since $D^{\varepsilon}_k$ is above $\Sigma_n$ for all $n$ and $k$ by the maximum principle and $\partial D^{\varepsilon}_k\cap\partial\Sigma_n=\emptyset$, we define $D_k$ as $D_{k}^0$ and conclude that it lies above $\Sigma_n$. In particular, property (1) holds true and $\Sigma_n$ is contained in $\cap_{k\in\N}M_k^+$ for all $n$. 

As for property (2), observe that $\pi(D_k)\subset\pi(D_{k+1})$ for all $k$, so we we will reason by contradiction assuming that $\cup_{k\in\N}\pi(D_k)$ is not all $T$. Translate vertically each $D_k$ so that it now lies in $T\times[\frac{-k}{2},\frac{k}{2}]$. These translated disks are area-minimizing (in particular, stable) in $U\times\R$, which has bounded geometry. Since the sequence $D_k$ is ordered (in the sense described in the previous paragraph), and $\cup_{k\in\N}\pi(D_k)$ is not all $T$, we can find an accumulation point $q_0\in T\times\R$ projecting outside $\cup_{k\in\N}\pi(D_k)$. All in all, standard convergence arguments yield the existence of a stable minimal surface $D_\infty\subset T\times\R$ with boundary $\{p_1,p_2\}\times\R$. Since $q_0\in D_{\infty}$, we conclude that $D_\infty$ cannot be the vertical cylinder $\ell_1\times\R$.

Consider the open-book decomposition of $T$ with binding $\pi^{-1}(\tilde{p}_3)$ given by Corollary~\ref{coro:open-book-decomposition}. Since $D_\infty$ lies in $T\times\R$ and $\tilde{p}_3$ is outside $T$, we can find a leaf $P$ of this open-book decomposition such that $P$ and $D_\infty$ are asymptotic and $D_\infty$ lies in one of the components of $(T\times\R)\sm P$ (note that there cannot be interior tangency points of $D_\infty$ and $P$ by maximum principle). Let $\{q_n\}$ be a sequence of points in $D_\infty$ whose distance to $P$ converges to zero, and let $D_\infty^n$ be the vertical translation of $D_\infty$ that sends $q_n$ to a point at height zero. Again, we can take the limit of $D_\infty^n$ as $n\to\infty$ and produce a minimal surface $D_\infty^\infty$ containing $(\pi(q_0),0)\in P$ and lying in the closure of the same component of $(T\times\R)\sm P$, so in this case $D_\infty^\infty$ does coincide with $P$ by the maximum principle. Let $K\subset P$ be a compact domain such that $\pi(K)\sm T\neq\emptyset$, so the convergence ensures that there exist domains $K_n$ in $\Sigma_n$ such that $K_n$ converges uniformly to $K$. This says that there are points in $\Sigma_n$ that project outside $T$, which is a contradiction. 
\end{proof}
	
\begin{remark}
Under the same assumptions, if $p$ and $p'$ are two points in $\ell_1$ and $\gamma$ is a $\mu$-convex curve in $T$ joining $p$ and $p'$, then the same argument in the proof of Proposition~\ref{prop:scherk} yields the existence of a minimal graph $u$ over $\Omega'$ such that $u|_\gamma=g$ and $u|_{\ell_1}=\pm\infty$ for any bounded function $g\in\mathcal{C}^0(\gamma)$, where $\Omega'$ is the relatively compact subdomain demarcated by $\ell_1$ and $\gamma$.
\end{remark}

We can use these Scherk type surfaces to analyze the boundary behavior of a sequence of minimal graphs that converges in the interior of the domain. This is a key step in the proof of Theorem~\ref{thm:JS} (see also~\cite[Lemma~7]{JS} and the Boundary Value Lemma in~\cite{CR}, whose proofs use different barriers). 
	
\begin{proposition}\label{prop:boundary-value}
Let $\{u_n\}$ be a sequence of minimal graphs in a domain $\Omega\subset M$. Assume that there is a $\mu$-convex arc $C\subset\partial\Omega$ such that each $u_n$ can be extended continuously to $\Omega\cup C$. If $u_n$ converges uniformly on compact subsets of $\Omega$ to a minimal graph $u$ and $\{u_n|_C\}$ converges uniformly to a function $f$ on $C$, then 
\begin{enumerate}[label=\emph{(\alph*)}]
	\item $\{u_n\}$ is uniformly bounded on a neighborhood of each $p_0\in C$,
	\item $u$ extends continuously to $\Omega\cup C$ by setting $u|_{C}=f$.
\end{enumerate}

\end{proposition}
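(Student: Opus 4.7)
The plan is to build Scherk-type local barriers above and below at $p_0\in C$ using Proposition~\ref{prop:scherk} and its subsequent remark, and then invoke the maximum principle Proposition~\ref{prop:uniqueness}. First I would pick points $p_2, p_3\in C$ on opposite sides of $p_0$ and sufficiently close to it, so that the $\mu$-geodesic chord $\ell_1$ joining $p_2$ to $p_3$ lies inside $\Omega$ (here $\mu$-convexity of $C$ with respect to the inner conormal of $\Omega$ is used). Let $\gamma\subset C$ be the subarc from $p_2$ to $p_3$ passing through $p_0$, and let $\Omega'\subset\Omega$ be the cap bounded by $\ell_1\cup\gamma$. Shrinking further if necessary so that $\Omega'$ fits inside a small $\mu$-geodesic triangle in the sense of Proposition~\ref{prop:scherk}, for each $\varepsilon>0$ the remark after that proposition produces minimal graphs $v_\varepsilon^+, v_\varepsilon^-$ on $\Omega'$ with boundary values $\pm\infty$ on $\ell_1$ and $f|_\gamma\pm\varepsilon$ on $\gamma$, respectively.

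Next I would compare $u_n$ with the Scherk-approximating graphs. Recall from the proof of Proposition~\ref{prop:scherk} that $v_\varepsilon^+=\lim_{k\to\infty}v_{\varepsilon,k}^+$ monotonically (Theorem~\ref{thm:MonConT}), where $v_{\varepsilon,k}^+$ is the minimal graph on $\Omega'$ with continuous boundary values $f|_\gamma+\varepsilon$ on $\gamma$ and the constant $k$ on $\ell_1$. The uniform convergence $u_n|_C\to f$ on $C$ yields an index $N_\varepsilon$ such that $u_n|_\gamma\leq f|_\gamma+\varepsilon=v_{\varepsilon,k}^+|_\gamma$ for all $n\geq N_\varepsilon$. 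On the other hand, $\ell_1$ is a compact subset of $\Omega$, so the convergence $u_n\to u$ on compact subsets of $\Omega$ yields $M:=\sup_n\sup_{\ell_1}u_n<+\infty$; hence for $k\geq M$ we have $u_n|_{\ell_1}\leq k=v_{\varepsilon,k}^+|_{\ell_1}$. Applying Proposition~\ref{prop:uniqueness} over $\Omega'$ (with vertex set $\{p_2,p_3\}$) gives $u_n\leq v_{\varepsilon,k}^+$ on $\Omega'$ for all such $n$ and $k$, and letting $k\to\infty$ we deduce
\[
v_\varepsilon^-\leq u_n\leq v_\varepsilon^+\quad\text{on }\Omega'\text{ for all }n\geq N_\varepsilon,
\]
the lower bound being obtained by the symmetric argument.

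To conclude (a), observe that $v_\varepsilon^\pm$ are smooth on $\Omega'$ and extend continuously to the interior of $\gamma$, so they are bounded on a neighborhood $V$ of $p_0$ in $\Omega\cup\gamma^\circ$; this provides a uniform bound on $\{u_n\}_{n\geq N_\varepsilon}$ on $V$, and each of the finitely many $u_n$ with $n<N_\varepsilon$ is individually bounded near $p_0$ by its continuous extension to $\Omega\cup C$. For (b), passing to the limit $u_n\to u$ in the two-sided inequality yields $v_\varepsilon^-\leq u\leq v_\varepsilon^+$ on $\Omega'$, so taking $p\to p_0$ and using the continuity of $v_\varepsilon^\pm$ at $p_0\in\gamma^\circ$ (with values $f(p_0)\pm\varepsilon$) gives
\[
f(p_0)-\varepsilon\leq\liminf_{p\to p_0}u(p)\leq\limsup_{p\to p_0}u(p)\leq f(p_0)+\varepsilon.
\]
Letting $\varepsilon\to 0^+$ we obtain $\lim_{p\to p_0}u(p)=f(p_0)$, so $u$ extends continuously to $p_0$ by $f(p_0)$.

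The main obstacle is the application of the maximum principle in the presence of the $\pm\infty$ boundary values of the Scherk barriers: Proposition~\ref{prop:uniqueness} requires finite continuous boundary data away from the vertices, so one must work with the approximating sequence $v_{\varepsilon,k}^\pm$ and pass to the limit in $k$; the key point enabling this is that the threshold $k\geq M$ can be chosen uniformly in $n$, which is precisely where the uniform boundedness of $\{u_n\}$ on the compact set $\ell_1\subset\Omega$ (coming from interior convergence) enters decisively.
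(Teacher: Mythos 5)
Your strategy---trapping $u_n$ between two Scherk-type barriers over the cap $\Omega'$ cut off by a $\mu$-geodesic chord, and then invoking the maximum principle---is essentially the paper's argument for the case in which $C$ is \emph{strictly} $\mu$-convex at $p_0$, and your derivation of (b) from the two-sided bound is a reasonable variant of the paper's barrier argument. There is, however, a genuine gap: in this paper $\mu$-convexity is explicitly \emph{not strict}, so $C$ is allowed to be a $\mu$-geodesic arc (this is precisely the case that gets reused in the proof of Proposition~\ref{prop:boundary-infinity} and in Lemma~\ref{lem:divline-boundary-values}). If $C$ is $\mu$-geodesic near $p_0$, the chord $\ell_1$ joining $p_2$ and $p_3$ coincides with the subarc $\gamma\subset C$, the cap $\Omega'$ is empty, and your construction produces nothing; the same degeneracy threatens whenever the $\mu$-geodesic curvature of $C$ merely vanishes near $p_0$. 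The paper handles this case by a completely different device: it takes nearby radial $\mu$-geodesics $C_\theta$ through a point $p_1\in C$, uses the Douglas criterion to obtain a minimal annulus $S$ spanning the two vertical quadrilaterals $\partial Q_{0,\rho}\cup\partial Q_{\theta,\rho}$, and uses a vertical translate of $S$ as the barrier. Your proof needs this (or some substitute) to cover the statement as written.

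A second, repairable, flaw: $\ell_1$ is \emph{not} a compact subset of $\Omega$, since its endpoints $p_2,p_3$ lie on $\partial\Omega$; hence uniform convergence on compact subsets of $\Omega$ does not give $M=\sup_n\sup_{\ell_1}u_n<+\infty$. Indeed, uniform boundedness of $\{u_n\}$ up to points of $C$ is exactly what item (a) asserts, so this step is essentially circular near $p_2$ and $p_3$. Fortunately, the uniformity in $n$ that you single out as "the key point" is not actually needed: each $u_n$ is continuous on the compact set $\overline{\Omega'}\subset\Omega\cup C$, hence bounded there by some $M_n<+\infty$, and choosing $k\geq M_n$ (depending on $n$) before letting $k\to\infty$ still yields $u_n\leq v_\varepsilon^+$ with a limit barrier independent of $n$. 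Equivalently, one may compare $u_n$ directly with $v_\varepsilon^+$: since $v_\varepsilon^+\to+\infty$ at $\ell_1$ while $u_n$ is bounded on $\overline{\Omega'}$, the set $\{u_n>v_\varepsilon^+\}$ cannot accumulate on $\ell_1$, so the comparison only requires $u_n\leq v_\varepsilon^+$ on $\gamma$ (this is the "milder assumptions" form of Theorem~\ref{thm:infinity}(a) discussed before its statement).
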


\begin{proof}
In order to prove item (a), let us distinguish two cases:
\begin{enumerate}[label=(\arabic*)]
	\item If $C$ is strictly $\mu$-convex at $p_0$, then take two Scherk graphs over a small triangle $T$ with values $\pm\infty$ along a side tangent to $\partial\Omega$ at $p_0$ and values $f(p_0)\pm1$ on the other two sides (as in~\cite[Fig.~2]{CR}, see also~\cite[Lemma~7]{JS}). It is clear that if $T$ is small enough, all the $u_n$ lie in between these two Scherk barriers.
	\item Assume that $C$ is a $\mu$-geodesic arc with $p_0$ in the interior (we restrict $C$ if necessary). Take $p_1\in C$ close enough to $p_0$ such that $B_M(p_1,r)$ lies in a totally $\mu$-convex neighborhood of $p_1$ that contains $p_0$. Let $C_\theta$ be the radial geodesics through $p_1$ parametrized by the angle they make with $C_0=C$. Let $d=d_M(p_0,p_1)$ and, for a small $0<\rho<d$, consider the $\mu$-geodesic segment $C_{\theta,\rho}=\{p\in C_\theta:|d_M(p_1,p)-d|<\rho\}$ and the vertical region $Q_{\theta,\rho}=\cup_{t\in(0,2\rho)}\phi_t(C_{\theta,\rho})\subset\pi^{-1}(C_\theta)$, see Figure~\ref{fig:douglas}. For a fixed $\rho$, $\partial Q_{\theta,\rho}$ converges to $\partial Q_{0,\rho}$ as $\theta\to 0$ so we can find (not necessarily minimal) annuli $\Sigma_{\theta,\rho}$ of arbitrarily small area with boundary $\partial Q_{\theta,\rho}\cup\partial Q_{0,\rho}$ by making $\theta$ small enough. Since $\partial Q_{0,\rho}$ remains fixed, Douglas criterion ensures the existence of a minimal annulus $S$ with boundary the two quadrilaterals $\partial Q_{0,\rho}\cup \partial Q_{\theta,\rho}$ for a small enough $\theta>0$. Since $u_n$ converges uniformly to $f$ along $C_{0,\rho}\subset C$ and converges uniformly to $u$ on the compact subset $C_{\rho,\theta}\subset\Omega$, a vertical translation of $S$ provides the desired uniform estimate for $u_n$ (from above and from below) on $\pi(S)\cup C_{0,\rho}$, which is a neighborhood of $p_0$ in $\Omega\cup C$.
\end{enumerate}

\begin{figure}
\centering\includegraphics[width=0.45\textwidth]{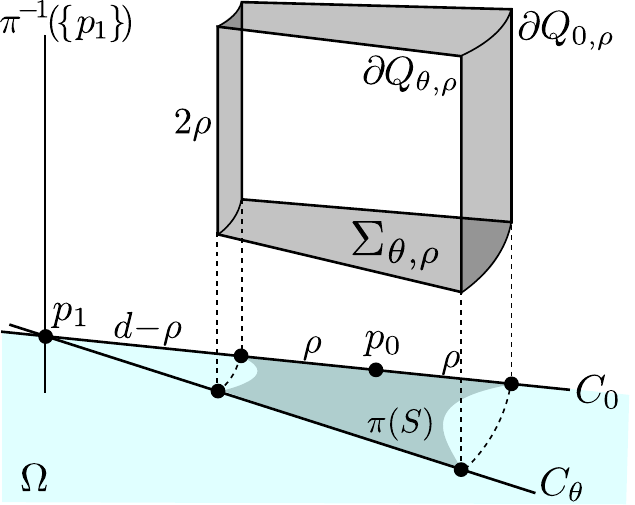}
\caption{The minimal annulus $\Sigma_{\theta,\rho}$ has less area than any two disks with boundary $\partial Q_{\theta,\rho}\cup\partial Q_{0,\rho}$ for small enough $\theta$. The shaded region in $\pi(S)\subset\Omega$ is the domain where the local barriers apply.}\label{fig:douglas}
\end{figure}

As for item (b), consider the barriers $\omega_k^{\pm}$ at $p_0$ given in the proof of Theorem~\ref{thm:generalized-existence}. Since $f$ is continuous and $\{u_n\}$ converges uniformly to $f$, there exists $r>0$ such that $|f(p)-f(p_0)|\leq\frac{1}{k}$ and $|u_n(p)-f(p_0)|\leq\frac{1}{k}$ for all $n\in\N$ and $p\in\partial\Omega$ with $d_M(p,p_0)<r$. We can choose the triangle $T$ that defines $\omega_k^{\pm}$ sufficiently small such that $d_M(p,p_0)<r$ for all $p\in T\cap\Omega$. Item (a) allows us to assume that $T$ is again small enough so $\{u_n\}$ is uniformly bounded on $\overline{T\cap\Omega}$. This means that we can choose the constant $M_0$ (see the definition of barrier in Section~\ref{subsec:perron}) large enough such that for all $n,k\in\N$ and $p\in T\cap\Omega$ we have
\begin{equation}\label{prop:boundary-value:eqn1}
\omega_k^-(p)\leq u_n(p)\leq\omega_k^+(p).
\end{equation}
This inequality holds in the boundary of $T\cap\Omega$ and extends to the interior by the maximum principle. Letting $n\to\infty$, the same inequality~\eqref{prop:boundary-value:eqn1} holds for $u$ at any interior point $p\in T\cap\Omega$. Finally, we get that $\lim_{p\to p_0}u(p)=f(p_0)$ by letting $k\to\infty$ so we get item (b). Here, we are using that $M_0$ is fixed in the process, whence $\omega_k^+$ (resp.\ $\omega_k^-$) is a decreasing (resp.\ increasing) sequence of functions.
\end{proof}

\begin{proposition}\label{prop:boundary-infinity}
Let $\{u_n\}$ be a sequence of minimal graphs in a domain $\Omega\subset M$. Assume that there is a $\mu$-geodesic arc $A\subset\partial\Omega$ such that each $u_n$ can be extended continuously to $\Omega\cup A$. If $u_n$ converges uniformly on compact subsets of $\Omega$ to a minimal graph $u$ and $\{u_n|_A\}$ diverges uniformly to $\pm\infty$, then $u$ also diverges to $\pm\infty$ as we approach $A$.
\end{proposition}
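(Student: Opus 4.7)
I would treat the $+\infty$ case (the other follows by replacing $u_n$ with $-u_n$, which is still a minimal graph after swapping the normal orientation, or alternatively by a verbatim argument with upper Scherk barriers). Fix an interior point $p_0\in A$; it suffices to show $u(p)\to+\infty$ as $p\to p_0$ from within $\Omega$. The strategy mirrors the final step of Proposition~\ref{prop:boundary-value}: construct lower barriers supplied by Scherk-type graphs given by Proposition~\ref{prop:scherk}.

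Choose a small $\mu$-geodesic triangle $T\subset\overline\Omega$ to which Proposition~\ref{prop:scherk} applies, with one side $\ell_1\subset A$ containing $p_0$ in its interior, the opposite vertex strictly inside $\Omega$, and the other two sides $\ell_2,\ell_3$ meeting $\partial\Omega$ only at the endpoints of $\ell_1$. This is possible because $A$ is $\mu$-geodesic and $p_0$ lies in the interior of $A$. Since $\ell_2\cup\ell_3$ is a compact subset of $\Omega$ and $\{u_n\}$ converges uniformly on compacts, the constant
\[
K:=\sup_{n}\,\max_{\ell_2\cup\ell_3}|u_n|
\]
is finite.

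For each $M>0$, Theorem~\ref{thm:existence} produces a minimal graph $w_M$ over $T$ with boundary values $M$ on $\ell_1$ and $0$ on $\ell_2\cup\ell_3$ (the Nitsche contour is admissible because $T$ is a $\mu$-geodesic triangle with interior angles at most $\pi$). As in the proof of Proposition~\ref{prop:scherk}, the sequence $\{w_M\}$ is nondecreasing in $M$ and converges (by Theorem~\ref{thm:MonConT}) to the Scherk graph $w$ with value $+\infty$ on $\ell_1$ and $0$ on $\ell_2\cup\ell_3$, and $w(p)\to+\infty$ as $p\to p_0\in\ell_1$. Since vertical translation by a constant $t_0$ sends the graph of any function $v$ to the graph of $v+t_0$, the shifted function $w_M-K$ is still a minimal graph over $T$.

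Now apply the maximum principle. By the uniform divergence $u_n|_A\to+\infty$, for each $M>0$ there is $N_M\in\mathbb N$ with $u_n\geq M$ on $\ell_1$ for all $n\geq N_M$. Then, on $\partial T\setminus V$ (where $V$ is the vertex set of $T$),
\[
w_M-K\leq u_n\qquad\text{on }\ell_1\cup\ell_2\cup\ell_3,
\]
because on $\ell_1$ we have $w_M-K=M-K\leq M\leq u_n$ and on $\ell_2\cup\ell_3$ we have $w_M-K=-K\leq u_n$. Since $w_M-K$ and $u_n$ define Nitsche contours over $T$ sharing the same vertex set $V$ and both have mean curvature zero, Proposition~\ref{prop:uniqueness} yields $w_M-K\leq u_n$ on $T$ for $n\geq N_M$. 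Letting $n\to\infty$ gives $w_M-K\leq u$ on $T$, and then letting $M\to\infty$ gives $w-K\leq u$ on $T$. Since $w\to+\infty$ as we approach any interior point of $\ell_1$, so does $u$, which proves the claim at $p_0$.

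\textbf{Main obstacle.} There is no substantive difficulty beyond assembling the right local barrier: one must shift the Scherk graph down by the constant $K$ so that it sits below every $u_n$ on the two ``inner'' sides of $T$ (a freedom granted precisely because vertical translation corresponds to adding a constant to the graphing function in a Killing submersion). Once this is done, the comparison is immediate and passing to the double limit in $n$ and $M$ is routine by monotone convergence.
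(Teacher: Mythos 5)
There is a genuine gap at the point where you define $K:=\sup_n\max_{\ell_2\cup\ell_3}|u_n|$ and claim it is finite because ``$\ell_2\cup\ell_3$ is a compact subset of $\Omega$.'' By your own construction the sides $\ell_2$ and $\ell_3$ meet $\partial\Omega$ at the endpoints of $\ell_1\subset A$, so $\ell_2\cup\ell_3$ is \emph{not} compactly contained in the open set $\Omega$, and uniform convergence on compact subsets of $\Omega$ gives no control of $u_n$ on the portions of $\ell_2,\ell_3$ near those endpoints. (Indeed, since $u_n|_A\to+\infty$ uniformly, $|u_n|$ is certainly unbounded there; but even the uniform \emph{lower} bound $u_n\geq -K$ on $\ell_2\cup\ell_3$, which is all your barrier really needs, is not automatic: a priori the graphs could dip arbitrarily low at interior points approaching $A$ before shooting up to $+\infty$ on $A$ itself.) Without a finite $K$, the boundary inequality $w_M-K\leq u_n$ on $\ell_2\cup\ell_3$ cannot be verified and the maximum-principle comparison collapses. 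This is not a cosmetic issue: it is exactly the step the paper's proof isolates first, namely that there is a neighborhood $V$ of $p_0$ in $\Omega\cup A$ on which $\{u_n\}$ is uniformly bounded from below. That bound is obtained by the argument of case (2) of the proof of Proposition~\ref{prop:boundary-value}, i.e., by producing a minimal annulus via the Douglas criterion whose boundary consists of one vertical quadrilateral over a compact arc inside $\Omega$ (where the $u_n$ do converge uniformly) and one over a subarc of $A$; a vertical translate of this annulus is the barrier that controls $u_n$ from below all the way up to $A$.

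Once that uniform lower bound $a$ is available, the rest of your argument is essentially the paper's: the paper compares $u_n$ on a small disk $D$ bounded by a subarc $A'\subset A$ and a $\mu$-convex arc $\Gamma\subset\Omega\cap V$ with the finite-valued minimal graphs $v_m$ (value $m$ on $A'$, value $a$ on $\Gamma$), and lets $m\to\infty$; your $w_M-K$ over the triangle $T$ plays the same role. So the fix is to prepend the annulus step (or otherwise prove the uniform lower bound near $A$) and then run your comparison with $K$ replaced by $-a$. As written, however, the ``main obstacle'' you dismiss is precisely the substantive content of the proposition.
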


\begin{proof}
Assume that $\{u_n|_A\}$ diverges to $+\infty$ (the case of $-\infty$ is similar) and let $p_0\in A$. The same argument as in case (2) of the proof of Proposition~\ref{prop:boundary-value} implies that there is a neighborhood $V$ of $p_0$ in $\Omega\cup A$ where $\{u_n\}$ is uniformly bounded from below, say there is some $a\in\R$ such that $u_n(p)\geq a$ for all $p\in V$ and $n\in\N$. Let $A'\subset A\cap V$ be a subarc centered at $p_0$ small enough so that there is a $\mu$-convex curve $\Gamma\subset\Omega\cap V$ joining the endpoints of $A'$ and $A'\cup\Gamma$ is the boundary of a topological disk $D$. Let $v_m$ be the minimal graph over $D$ with boundary values $m$ on $A'$ and $a$ on $\Gamma$, which exists by Theorem~\ref{thm:existence}. By the maximum principle, it follows that $u_n\geq v_m$ on $D$ for $n$ large enough, whence $u\geq v_m$ on $D$ for all $m$. Since $v_m$ is an increasing sequence of functions that take arbitrarily large values, we deduce that $\lim_{p\to p_0}u(p)=+\infty$.
\end{proof}

\section{The Jenkins--Serrin Problem}\label{sec:JS}
	
We give necessary and sufficient conditions to solve the Dirichlet problem for the minimal surface equation in $\E$ over a relatively compact domain $\Omega\subset M$, with possible infinite boundary values on some arcs of $\partial\Omega$, which must be $\mu$-geodesic by Lemma~\ref{lemma:boundary-structure}. The domain $\Omega$ is allowed to have simple closed $\mu$-geodesics as boundary components with no vertices. This makes us consider the following problem:

\begin{definition}\label{def:JS-problem}
A relatively compact open connected domain $\Omega\subset M$ will be called a \emph{Jenkins--Serrin domain} if $\partial\Omega$ is piecewise regular and consists of $\mu$-geodesic open arcs or simple closed $\mu$-geodesics $A_1,\ldots,A_r,B_1,\ldots,B_s$ and $\mu$-convex curves $C_1,\ldots,C_m$ with respect to the inner conormal to $\Omega$. The finite set $V\subset\partial\Omega$ of intersections of all these curves will be called \emph{vertex set} of $\Omega$. 

The \emph{Jenkins--Serrin problem} consists in finding a minimal graph over $\Omega$, with limit values $+\infty$ on each $A_i$ and $-\infty$ on each $B_i$, and such that it extends continuously to $\Omega\cup(\cup_{i=1}^mC_i)$ with prescribed continuous values on each $C_i$ with respect to a prescribed initial section $F_0$ defined on a neighborhood of $\Omega$.
\end{definition}

Note that all arcs are assumed to not contain their endpoints because a possible solution to the Jenkins--Serrin problem is not actually defined (as a function) at the vertices of the domain $\Omega$ in general, where discontinuities may occur. 

An extra admissibility condition for Jenkins--Serrin domains is needed.  

\begin{definition}
A Jenkins--Serrin domain $\Omega\subset M$ is said \emph{admissible} if neither two of the $A_i$'s nor two of the $B_i$'s meet at a convex corner.
\end{definition}

Proposition~\ref{prop:admissibility} will show that the admissibility is necessary and Theorem~\ref{thm:JS} will show that it is sufficient. If there are no $C_i$ components, Jenkins and Serrin~\cite{JS} also assumed that neither $\cup A_i$ nor $\cup B_i$ is connected, a condition that has been also required in the case of $M\times\R$, see~\cite{Pin} or~\cite{MRR}. Our approach allows us to drop this extra hypothesis for the admissibility of the domain, see also Section~\ref{sec:examples}.

\begin{definition}\label{def:mu-polygon}
Let $\Omega$ be a Jenkins--Serrin domain.  We will say that $\mathcal{P}$ is a $\mu$-polygon inscribed in $\Omega$ if $\mathcal{P}$ is the union of disjoint curves $\Gamma_1\cup\dots\cup\Gamma_k$ satisfying the following conditions (see Figure~\ref{fig:JS}):
	\begin{itemize}
		\item $\mathcal P$ is the boundary of an open and connected subset of $\Omega$;
		\item each $\Gamma_j$ is either a closed $\mu$-geodesic or a closed piecewise-regular curve with $\mu$-geodesic components whose vertices are among the vertices of $\Omega$.
	\end{itemize}
For such an inscribed $\mu$-polygon $\mathcal P$, define
\begin{align*}
\alpha(\mathcal P)&=\Length_\mu((\cup A_i)\cap\mathcal{P}),&\gamma(\mathcal P)&=\Length_\mu(\mathcal{P})
,\\
\beta(\mathcal P)&=\Length_\mu((\cup B_i)\cap\mathcal{P}).&&
\end{align*}
\end{definition}

\begin{figure}
	\centering\includegraphics[width=0.5\textwidth]{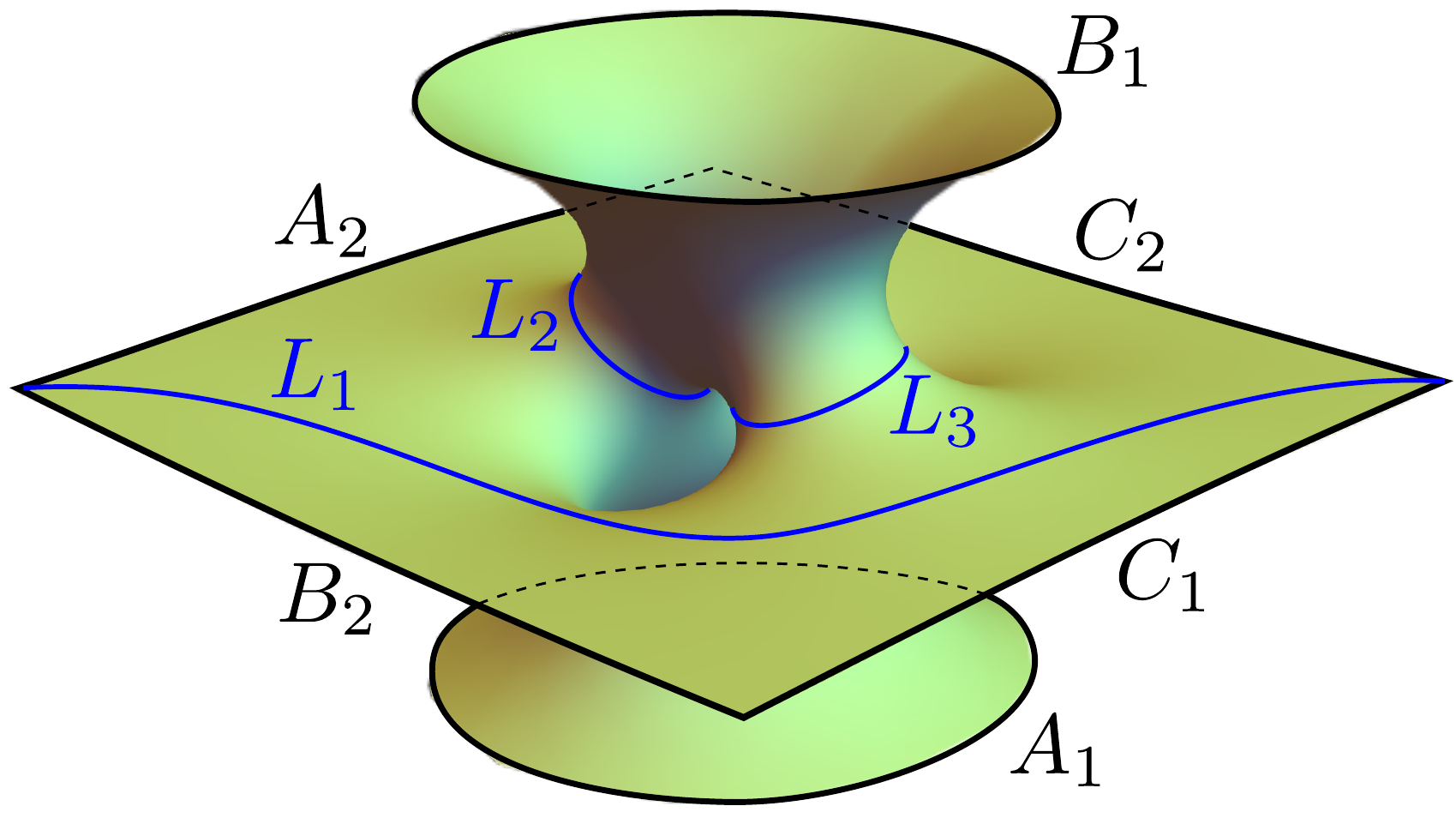}
	\caption{A Jenkins--Serrin problem with six $\mu$-geodesic boundary components over a domain with the topology of a Costa surface. A possible inscribed $\mu$-polygon is $\mathcal P=\cup_{i=1}^4\Gamma_i$ with $\Gamma_1=L_1\cup A_2\cup C_2$, $\Gamma_2=L_2$, $\Gamma_3=L_3$ and $\Gamma_4=A_1$.}\label{fig:JS}
\end{figure}

Now we have all the ingredients to state our main theorem.

\begin{theorem}\label{thm:JS}
Let $\Omega$ be an admissible Jenkins--Serrin domain.
\begin{enumerate}[label=\emph{(\alph*)}]				
\item If the family $\{C_i\}$ is non-empty, then the Jenkins--Serrin problem in $\Omega$ has a solution if and only if 
\begin{equation}\label{JS-condition}
	2\alpha(\mathcal P)<\gamma(\mathcal P)\quad\text{and}\quad 2\beta(\mathcal P)<\gamma(\mathcal P)
\end{equation} 
for all inscribed $\mu$-polygons $\mathcal P\subset\Omega$, in which case the solution is unique. 

\item If the family $\{C_i\}$ is empty, then the Jenkins--Serrin problem in $\Omega$ has a solution if and only if~\eqref{JS-condition} holds true for all inscribed $\mu$-polygons $\mathcal{P}\neq\partial\Omega$ and $\alpha(\partial\Omega)=\beta(\partial\Omega)$. The solution is unique up to vertical translations.
\end{enumerate}
\end{theorem}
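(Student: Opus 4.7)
The plan is to combine a flux formalism with monotone approximation, following the Jenkins--Serrin scheme but using the divergence-line machinery of Section~\ref{sec:divergence-lines} in place of a purely combinatorial argument. First I would introduce, for a minimal graph $u$ and an oriented arc $\Gamma\subset\overline\Omega$ with unit conormal $\eta$,
\[\Flux(u,\Gamma)=\int_\Gamma\frac{\mu^2\langle Gu,\eta\rangle}{W_u}\,\df s_M.\]
By Lemma~\ref{lemma:H} the corresponding vector field is divergence free, so $\Flux(u,\partial D)=0$ for every admissible subdomain $D\subset\Omega$. The pointwise estimate $|\Flux(u,\Gamma)|\leq\Length_\mu(\Gamma)$ is immediate from $\mu^2\|Gu\|/W_u<1$ and Cauchy--Schwarz, with saturation in the limit characterising the $A$-type and $B$-type boundary behaviour.

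For necessity and uniqueness: given an inscribed $\mu$-polygon $\mathcal P$ bounding $D\subset\Omega$, I would decompose $\partial D$ into arcs in $\cup A_i$, $\cup B_i$, $\cup C_i$ together with the interior $\mu$-geodesic edges of $\mathcal P$, and apply $\Flux(u,\partial D)=0$. The $A$-arcs contribute $+\Length_\mu$, the $B$-arcs contribute $-\Length_\mu$, and the remaining arcs contribute strictly less than $\Length_\mu$ in absolute value, yielding the two strict inequalities in~\eqref{JS-condition}; taking $\mathcal P=\partial\Omega$ in case (b) forces $\alpha(\partial\Omega)=\beta(\partial\Omega)$. Uniqueness follows by integrating the monotone identity of Lemma~\ref{lemma:factorization} on $U=\{u_1>u_2\}$: the boundary flux vanishes because on the $A_i$ and $B_i$ components both solutions reach $\pm\infty$ and saturate with the same sign, while on the $C_i$ components $u_1=u_2$. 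Hence $\nabla u_1=\nabla u_2$ throughout, giving the stated uniqueness in each case.

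For existence I would use monotone approximation: for each $n\in\N$, Theorem~\ref{thm:generalized-existence} provides $u_n$ with boundary data $+n$ on the $A_i$, $-n$ on the $B_i$, and a smooth approximation of each $f_i$ on the $C_i$ (identically zero in case (b)). After a suitable vertical normalisation, Proposition~\ref{prop:uniqueness} makes $\{u_n\}$ nondecreasing, so Theorem~\ref{thm:MonConT} produces a convergence set $\mathcal U\subset\Omega$ with complement $\mathcal V$; Propositions~\ref{prop:boundary-value} and~\ref{prop:boundary-infinity} then identify the correct boundary values of $u=\lim u_n$ as soon as $\mathcal V=\emptyset$, and the symmetric argument applied to the decreasing sequence obtained by swapping $+n$ and $-n$ rules out a $-\infty$ divergence set.

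The heart of the proof, and the main obstacle, is showing $\mathcal V=\emptyset$. Here I would invoke the divergence-line theory of Section~\ref{sec:divergence-lines}: Lemmas~\ref{lem:divline-properly-embedded} and~\ref{lem:divline-boundary-values} together with Proposition~\ref{prop:disjoint-lines} reduce $\partial\mathcal V\cap\Omega$ to a finite collection of disjoint, properly embedded $\mu$-geodesic arcs whose endpoints lie in the vertex set $V$, and Corollary~\ref{coro:components-are-polygons} packages these arcs together with the adjacent $A_i$- or $B_i$-components into an inscribed $\mu$-polygon $\mathcal P$ bounding a component of $\mathcal V$. Along each divergence line the flux saturates on both sides (Lemma~\ref{lemma:boundary-structure} makes the angle function tend to zero there), so the flux identity applied to that component forces either $2\alpha(\mathcal P)=\gamma(\mathcal P)$ or $2\beta(\mathcal P)=\gamma(\mathcal P)$, contradicting~\eqref{JS-condition} in case (a). In case (b) the only polygon realising the borderline equality is $\partial\Omega$ itself, which is excluded by the vertical normalisation using $\alpha(\partial\Omega)=\beta(\partial\Omega)$, so the nondecreasing sequence cannot blow up on all of $\Omega$. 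The delicate point is precisely the divergence-line analysis, which must accommodate the possibly uncountable family of limits and the absence of the ambient symmetries available in $\mathbb{H}^2\times\R$.
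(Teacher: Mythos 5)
Your flux formalism for necessity and your maximum-principle argument for uniqueness match the paper's Proposition~\ref{prop:admissibility} and Theorem~\ref{thm:infinity} in substance. The existence argument, however, has a genuine gap at its very first step: the sequence $\{u_n\}$ with boundary data $+n$ on $\cup A_i$ and $-n$ on $\cup B_i$ cannot be made nondecreasing by any vertical normalisation, because the data increases with $n$ on $\cup A_i$ and decreases on $\cup B_i$ simultaneously (and is essentially fixed on $\cup C_i$). Proposition~\ref{prop:uniqueness} therefore gives no monotonicity, and Theorem~\ref{thm:MonConT} does not apply to $\{u_n\}$; this is precisely why the paper abandons the monotone Jenkins--Serrin scheme (which, as the counterexample of Figure~\ref{fig:counterexample} shows, breaks down without condition (C2)) and works instead with arbitrary subsequences and divergence lines. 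The monotone trick survives only in an auxiliary role: in Lemma~\ref{lemma:isotopy-boundary} the renormalised sequence $v_n=u_n-n$ \emph{is} decreasing and is used to compute the limit fluxes across the $A_i$.

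The second gap is in your claimed contradiction on a component of the divergence set. The flux identity on an inscribed $\mu$-polygon $\mathcal P$ bounding a convergence component only yields $2\alpha(\mathcal P)\geq\gamma(\mathcal P)$ or $2\beta(\mathcal P)\geq\gamma(\mathcal P)$ if \emph{all} the interior divergence lines in $\mathcal P$ contribute flux with the same sign, i.e.\ if the translated sequence diverges to $+\infty$ (or to $-\infty$) across every one of them; for a general component the signs are mixed and no contradiction results. The paper closes this with the level-climbing argument of Lemma~\ref{lemma:convergence-JS}: starting from a component adjacent to some $B_i$, one repeatedly passes to a strictly higher adjacent component (using the linear order on each isotopy class from Proposition~\ref{prop:disjoint-lines} and the finiteness of Corollary~\ref{coro:finite-convergence-components}) until reaching a component whose boundary divergence lines all carry the sign $-\infty$, and only then does the flux computation contradict the JS-conditions. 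Finally, your proposal does not address the last step of the paper's proof: even after all divergence lines are excluded one must still rule out that $u_{\sigma(n)}(p)\to\pm\infty$ while $a_n=n-u_n(p)$ stays bounded, so that the normalised limit would take a finite constant value on $\cup A_i$ instead of $+\infty$; this requires one more flux computation across $\partial\Omega$ and is where the hypothesis $\alpha(\partial\Omega)=\beta(\partial\Omega)$ is used in case (b).
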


The conditions in the statement about inscribed polygons will be called the \emph{JS-conditions} for short. In the rest of this section, we will introduce the flux to prove that these JS-conditions are necessary (Proposition~\ref{prop:admissibility}) as well as the uniqueness (Theorem~\ref{thm:infinity}). Finally, the existence of solutions will be proved in Section~\ref{sec:divergence-lines} by the method of divergence lines.

\subsection{The flux across a curve}\label{subsec:flux}
Let $\Omega\subset M$ be any domain. As shown by Lemma~\ref{lemma:H}, the fact that $u\in\mathcal{C}^\infty(\Omega)$ satisfies the minimal surface equation can be written as $\mathrm{div}(X_u)=0$, where $X_u=\mu^2Gu/W_u$. This zero-divergence equation leads naturally to the definition of a flux for minimal graphs across curves of $\Omega$.

\begin{definition}
Let $\Gamma\subset\Omega$ be a piecewise regular curve. The \emph{flux} of $u\in\mathcal{C}^\infty(\Omega)$ across $\Gamma$ with respect to a unit normal vector field $\eta$ to $\Gamma$ in $M$ is defined as
	\[\Flux(u,\Gamma)=\int_\Gamma\langle X_u,\eta\rangle.\]
\end{definition}

Since $\|X_u\|\leq\mu$ is bounded in $\Omega$, the flux of $u$ is well defined. This definition depends on the choice of the unit normal vector field, but the absolute value $|\Flux(u,\Gamma)|$ does not. The divergence theorem ensures that the flux across a curve enclosing a certain domain vanishes, so $|\Flux(u,\Gamma)|=|\Flux(u,\Gamma')|$ for two piecewise regular curves $\Gamma$ and $\Gamma'$ which are homotopic with respect to their common endpoints. Note also that Cauchy--Schwarz inequality yields the upper bound
\[|\Flux(u,\Gamma)|\leq \int_\Gamma\mu=\Length_\mu(\Gamma).\]
This last term denotes the $\mu$-length of $\Gamma$, i.e., the length of $\Gamma$ with respect to the conformal metric $\mu^2\df s_M^2$. 

If $X_u$ extends continuously to a regular curve $\Gamma\subset\partial\Omega$, then the flux across $\Gamma$ can be defined similarly. Next lemma discusses the two different scenarios in which this idea has been typically applied. 

\begin{lemma}\label{lemma:flux-boundary} 
Let $u$ be a solution to the minimal surface equation over $\Omega$.
\begin{enumerate}[label=\emph{(\arabic*)}]
	\item If $u$ has limit value $\pm\infty$ along a $\mu$-geodesic arc $A\subset\partial\Omega$, then $\Flux(u,A)=\pm\Length_\mu(A)$ with respect to the outer conormal to $\Omega$ along $A$.
	\item If $u$ extends continuously to $\Omega\cup C$, where $C\subset\partial\Omega$ is a $\mu$-convex curve (with respect to the inner conormal), then $|\Flux(u,C)|<\Length_\mu(C)$.
\end{enumerate}
\end{lemma}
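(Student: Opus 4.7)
The plan for both items rests on the identity $X_u=\mu\,\pi_*(N_u)$, which follows from Lemma~\ref{lemma:H} via $\pi_*(N_u)=\mu Gu/W_u$, together with the formula $\|\pi_*(N_u)\|^2=1-\nu^2/\mu^2$. This gives $\|X_u\|=\mu\sqrt{1-\nu^2/\mu^2}\leq\mu$, with equality precisely when the angle function $\nu$ vanishes.

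For item~(1), the key is to show that $X_u$ extends continuously to $A$ with boundary value $\pm\mu\eta$, where $\eta$ is the outer conormal to $\Omega$ along $A$. Lemma~\ref{lemma:boundary-structure} gives $\nu\to 0$ along $A$, hence $\|X_u\|\to\mu$. To identify the direction, I would use that the tangent planes of $F_u$ become vertical and the graph asymptotes to the minimal cylinder $\pi^{-1}(A)$; therefore $\pi_*(N_u)$ converges to a unit horizontal vector perpendicular to $A$ in $M$. The upward-pointing normal $N_u$ tilts away from the side on which the graph rises, so the sign is $+$ when $u\to+\infty$ (the graph rises above $\Omega$, pushing $N_u$ outward) and $-$ when $u\to-\infty$. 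Once the continuous extension is established, $\Flux(u,A)=\int_A\langle\pm\mu\eta,\eta\rangle=\pm\Length_\mu(A)$.

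For item~(2), I would interpret $\Flux(u,C)$ as the common value of $\Flux(u,C_n)$ along any sequence of interior curves $C_n\subset\Omega$ converging to $C$ (by the divergence theorem applied to $\mathrm{div}(X_u)=0$). The naive bound $|\Flux(u,C)|=|\Flux(u,C_n)|\leq\int_{C_n}\|X_u\|\leq\int_{C_n}\mu\to\Length_\mu(C)$ yields only the non-strict inequality. Suppose for contradiction that $|\Flux(u,C)|=\Length_\mu(C)$. Then $\int_{C_n}(\mu-|\langle X_u,\eta_n\rangle|)\to 0$, so up to a subsequence, $\|X_u\|\to\mu$ and $X_u/\|X_u\|\to\pm\eta$ almost everywhere along $C$; equivalently, $\nu\to 0$ and $\pi_*(N_u)\to\pm\eta$ a.e.\ on $C$, so that $F_u$ behaves asymptotically like the vertical cylinder $\pi^{-1}(C)$. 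To contradict this, I would exploit the $\mu$-convexity of $C$ by invoking Proposition~\ref{prop:scherk} at an interior regular point $p_0\in C$: choose a small $\mu$-geodesic triangle $T$ with one side tangent to $C$ at $p_0$ so that $T\cap\Omega$ is a neighborhood of $p_0$ in $\Omega$ (possible by $\mu$-convexity), and assign boundary values $\pm\infty$ on that side and large finite values on the other two. The resulting Scherk graphs serve as upper and lower barriers for $u$ near $p_0$, yielding a uniform $C^0$-bound and, through the boundary gradient control induced by the same barriers (cf.\ Proposition~\ref{prop:boundary-value}), a uniform bound on $\|\nabla u\|$ and hence on $W_u$ near $p_0$. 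This contradicts $\nu\to 0$ at $p_0$.

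The principal obstacle is precisely this last step: the pointwise strict inequality $\|X_u\|<\mu$ inside $\Omega$ does not automatically upgrade to a strict integral inequality on $C$, so one must combine the $\mu$-convexity of $C$ with the continuity of $u$ up to $C$ (through Scherk-type barriers and the associated boundary gradient estimates) to produce a definite gap ruling out the extremal flux.
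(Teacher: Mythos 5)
Your item (1) coincides with the paper's argument: Lemma~\ref{lemma:boundary-structure} gives $\nu\to 0$ along $A$, so $X_u$ extends continuously to $A$ as $\pm\mu\eta$ and the flux is obtained by integration. This part is fine (the sign bookkeeping should be checked against the convention $\pi_*(N_u)=\mu\,Gu/W_u$ of Lemma~\ref{lemma:H}, but your conclusion matches the paper's).

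Item (2) contains a genuine gap. The paper does not argue by contradiction from the extremal case; it uses Theorem~\ref{thm:generalized-existence} to produce a second minimal graph $v$ with boundary data $u-a$ on a subarc $C'$ and $u$ on the rest of $\partial\Omega$, and then combines the strict positivity in Lemma~\ref{lemma:factorization} with the identity $\mathrm{div}\left((u-v)(X_u-X_v)\right)=\langle\nabla u-\nabla v,X_u-X_v\rangle$ and the divergence theorem to get $a\int_{C'}\langle X_u-X_v,\eta\rangle>0$; choosing $a=\pm1$ and using $|\Flux(v,C')|\leq\Length_\mu(C')$ yields the strict inequality. Your route --- extracting $\nu\to 0$ a.e.\ on $C$ from the extremal flux and then contradicting it at an interior point $p_0\in C$ --- breaks down at the last step. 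The Scherk graphs of Proposition~\ref{prop:scherk}, as used in Proposition~\ref{prop:boundary-value}, only give $\mathcal{C}^0$ control: the barrier takes the value $\pm\infty$ on the side $\ell_1$ tangent to $C$ at $p_0$, so it is itself infinite at $p_0$ and cannot bound the normal derivative of $u$ there; no boundary gradient estimate comes out of this construction. More fundamentally, the statement you are trying to contradict is not contradictory: a minimal graph that merely extends continuously to a $\mu$-convex arc can have $\nu\to 0$ at boundary points (already in $\R^3$, continuous but non-Lipschitz Dirichlet data over a strictly convex planar domain forces the gradient to blow up at the boundary). What item (2) rules out is the \emph{integrated} extremal behaviour along all of $C$, and that requires an argument of a different nature, such as the comparison argument the paper employs.
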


\begin{proof}
The equality $|\Flux(u,A)|=\Length_\mu(A)$ easily follows from the fact that if $u\to\pm\infty$ along $A$, then the tangent planes converge uniformly to vertical planes by Lemma~\ref{lemma:boundary-structure}. This means that $\nabla u$ is not bounded when approaching $A$, whence $X_u$ is asymptotically equivalent to $\mu\, Gu/\|Gu\|$ or $\mu\nabla u/\|\nabla u\|$, where the norm is computed with respect to $\df s_M^2$. Consequently, $X_u$ can be extended continuously to $\Omega\cup A$ as $X_u=\pm\mu\cdot\eta$ on $A$, where the sign is positive if $u\to+\infty$ or negative if $u\to-\infty$ and $\eta$ is the outer conormal to $\Omega$ along $A$.
	
In order to prove item (2), we will suppose without loss of generality that $\Omega$ is itself $\mu$-convex and $u$ has continuous values in $\partial\Omega$, because the argument is local. Let $C'$ be a proper open subset of $\partial\Omega$. Theorem~\ref{thm:generalized-existence} guarantees the existence of $v\in\mathcal{C}^\infty(\Omega)$ satisfying the minimal graph equation such that $v=u-a$ in $C'$ and $v=u$ in $\partial\Omega\smallsetminus C'$. Since $u-v$ is not constant, Lemma~\ref{lemma:factorization} gives
	\[\int_\Omega\langle\nabla u-\nabla v,X_u-X_v\rangle>0.\]
	Since $\mathrm{div}\left((u-v)(X_u-X_v)\right)=\langle\nabla u-\nabla v,X_u-X_v\rangle$, divergence theorem yields
	\[0<\int_{\partial\Omega}(u-v)\langle X_u-X_v,\eta\rangle=a\int_{C'}\langle X_u-X_v,\eta\rangle.\]
	Letting $a=\pm 1$ and using the inequality $|\Flux(v,C')|\leq\Length_\mu(C')$, we obtain that $|\Flux(u,C')|<\Length_\mu(C')$, whence $|\Flux(u,C)|<\Length_\mu(C)$.
\end{proof}

Next proposition shows that the admissibility of the domain and the JS-conditions given by Theorem~\ref{thm:JS} are necessary.

\begin{proposition}\label{prop:admissibility}
Consider a Jenkins--Serrin problem over some domain $\Omega\subset M$.
\begin{enumerate}[label=\emph{(\arabic*)}]
 	\item If two $\mu$-geodesic components of $\partial\Omega$ meet at a convex corner and are both assigned the same value $+\infty$ or $-\infty$, then the problem has no solutions.
 	\item If the problem has a solution, then the JS-conditions are satisfied.
 \end{enumerate}
\end{proposition}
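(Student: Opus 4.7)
Both assertions follow directly from the flux machinery just introduced: the identity $\mathrm{div}(X_u)=0$ for $X_u=\mu^2\,Gu/W_u$, the flux evaluations provided by Lemma~\ref{lemma:flux-boundary}, and the trivial bound $|\Flux(u,\Gamma)|\leq\Length_\mu(\Gamma)$ valid on any piecewise regular curve. Since $\|X_u\|\leq\mu$, the vector field extends continuously to the infinite-valued boundary pieces, so the divergence theorem applies over domains whose boundary meets $\cup A_i\cup\cup B_i$.

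For item (1), suppose by contradiction that a solution $u$ exists and that two $\mu$-geodesic components $A_i$, $A_j$ meet at a convex corner $p$ and are both assigned $+\infty$ (the case $-\infty$ is identical). Because the interior angle at $p$ is strictly less than $\pi$, for $q_1\in A_i$ and $q_2\in A_j$ sufficiently close to $p$ the $\mu$-geodesic segment $\sigma$ joining $q_1$ and $q_2$ lies inside $\Omega$ and, together with the subarcs of $A_i$ and $A_j$ from $q_1,q_2$ to $p$, bounds a $\mu$-geodesic triangle $T\subset\Omega$. Applying the divergence theorem on $T$ with the outer conormal and using Lemma~\ref{lemma:flux-boundary}(1) on the two infinite-valued sides yields
\[
0=\Length_\mu(A_i\cap\partial T)+\Length_\mu(A_j\cap\partial T)+\Flux(u,\sigma),
\]
whence $|\Flux(u,\sigma)|=\Length_\mu(A_i\cap\partial T)+\Length_\mu(A_j\cap\partial T)$. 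Combined with $|\Flux(u,\sigma)|\leq\Length_\mu(\sigma)$, this contradicts the strict $\mu$-triangle inequality for a small triangle with a convex angle at $p$.

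For item (2), let $\mathcal{P}=\Gamma_1\cup\dots\cup\Gamma_k$ be an inscribed $\mu$-polygon bounding an open connected subdomain $D\subset\Omega$, and decompose its pieces into those lying on $\partial\Omega$ (inherited from some $A_i$, $B_i$, or possibly from $\mu$-geodesic portions of some $C_i$) and interior $\mu$-geodesic arcs $L_1,\ldots,L_\ell\subset\Omega$. The outer conormal of $D$ coincides with the outer conormal of $\Omega$ along boundary pieces, so the divergence theorem combined with Lemma~\ref{lemma:flux-boundary} gives
\[
0=\alpha(\mathcal P)-\beta(\mathcal P)+\sum_i\Flux(u,C_i\text{-piece})+\sum_{k=1}^{\ell}\Flux(u,L_k),
\]
and using $\gamma(\mathcal P)=\alpha(\mathcal P)+\beta(\mathcal P)+\sum\Length_\mu(C_i\text{-piece})+\sum\Length_\mu(L_k)$ we obtain
\[
\gamma(\mathcal P)-2\alpha(\mathcal P)=\sum_i\bigl(\Flux(u,C_i\text{-piece})+\Length_\mu(C_i\text{-piece})\bigr)+\sum_k\bigl(\Flux(u,L_k)+\Length_\mu(L_k)\bigr).
\]
Each summand is non-negative; the $C_i$-terms are strictly positive by Lemma~\ref{lemma:flux-boundary}(2), and each interior term $\Flux(u,L_k)+\Length_\mu(L_k)$ is strictly positive because $u\in\mathcal C^\infty(\Omega)$ forces $\|X_u\|<\mu$ pointwise in the interior. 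Therefore the right-hand side is strictly positive provided the polygon contains at least one $C_i$-piece or one interior arc $L_k$. This always occurs when $\mathcal P\neq\partial\Omega$: if $\partial D\subset\partial\Omega$, then $D$ is simultaneously open and closed in the connected set $\Omega$, forcing $D=\Omega$. Swapping the roles of $A_i$ and $B_i$ yields $2\beta(\mathcal P)<\gamma(\mathcal P)$ as well, and in case~(a) the same argument with $\mathcal P=\partial\Omega$ works because the non-empty family $\{C_i\}$ contributes strictly.

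The main delicacy is the strict inequality in~\eqref{JS-condition} rather than only $\leq$. The decisive observation is that interior smoothness of $u$ prevents $\|X_u\|$ from saturating the bound $\mu$ along any interior arc $L$: saturation of Cauchy--Schwarz would mean $X_u=-\mu\eta$ along $L$, i.e.\ the angle function $\nu=\mu/W_u$ would vanish at interior points, which is impossible for a smooth graph. Cutting the convex corner in item~(1) also requires care, but is handled by restricting to a totally $\mu$-convex neighborhood of $p$ where $\mu$-distance is realized by a unique $\mu$-geodesic and the triangle inequality is strict.
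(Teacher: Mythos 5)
Your proof is correct and follows essentially the same route as the paper: item (1) is the same corner-cutting flux argument contradicting the $\mu$-triangle inequality, and item (2) is the same divergence-theorem computation over the inscribed $\mu$-polygon using Lemma~\ref{lemma:flux-boundary}, merely reorganized so that each boundary piece contributes a manifestly nonnegative term $\Flux+\Length_\mu$. Your explicit justification of the strictness (via $\|X_u\|<\mu$ on interior arcs) and of why $\mathcal P\neq\partial\Omega$ forces an interior arc or a $C_i$-piece makes slightly more careful what the paper leaves implicit, but the argument is the same.
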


\begin{proof}
Assume by contradiction that the problem with two adjacent sides $A_1$ and $A_2$ meeting at a convex corner $p$ has a solution $u$. Let $p_1\in A_1$ and $p_2\in A_2$ be sufficiently close to $p$ so that the $\mu$-geodesic arcs joining $p,p_1,p_2$ are contained in $\Omega$ and realize the $\mu$-distance between these three points. The flux of $u$ across the boundary of the triangle of vertices $p,p_1,p_2$ is zero, which implies that $\Length_\mu(p_1p_2)>\Length_\mu(pp_1)+\Length_\mu(pp_2)$ by Lemma~\ref{lemma:flux-boundary}. This is in contradiction with the triangle inequality for the $\mu$-metric.

As for item (2), let $\mathcal P$ be an inscribed $\mu$-polygon which is the boundary of an open connected subset $\Omega_0\subset\Omega$. The flux of a solution $u$ across $\mathcal P$ gives
\begin{equation}\label{prop:admissibility:eqn1}
\Flux(u,(\cup A_i)\cap \mathcal P)+\Flux(u,(\cup B_i)\cap \mathcal P)+\Flux(u,\mathcal P\sm[(\cup A_i)\cup(\cup B_i)])=0,
\end{equation}
with respect to the outer conormal to $\Omega_0$ along $\mathcal P$. The first two summands in~\eqref{prop:admissibility:eqn1} add up to $\alpha(\mathcal P)-\beta(\mathcal P)$, whereas the third one is, in absolute value, less than $\gamma(\mathcal P)-\alpha(P)-\beta(\mathcal P)$ by Lemma~\ref{lemma:flux-boundary}. This gives the inequality
\[\gamma(\mathcal P)-\alpha(P)-\beta(\mathcal P)>|\Flux(u,\mathcal P\sm[(\cup A_i)\cup(\cup B_i)])|=|\alpha(\mathcal P)-\beta(\mathcal P)|,\]
and it easily follows that $2\alpha(\mathcal P)<\gamma(\mathcal P)$ and $2\beta(\mathcal P)<\gamma(\mathcal P)$. However, this is true unless $\mathcal P=\partial\Omega$ and there are no $C_i$ components, in which case the third summand in~\eqref{prop:admissibility:eqn1} is identically zero, whence $\alpha(\mathcal P)=\beta(\mathcal P)$.
\end{proof}

\subsection{A maximum principle for Jenkins--Serrin graphs}\label{subsec:maximum-principle}
Next result extends Proposition~\ref{prop:uniqueness} to allow infinite values. It is stated as needed later, but it is worth observing that item (a) still holds true under milder assumptions with the same proof (e.g., if $u$ extends continuously or has asymptotic value $-\infty$ on a $\mu$-geodesic arc on which $v$ has asymptotic value $+\infty$).

\begin{theorem}[Uniqueness]\label{thm:infinity}
Let $\Omega\subset M$ be a Jenkins--Serrin domain. Suppose that $u,v\in\mathcal{C}^\infty(\Omega)$ define minimal graphs (with respect to a given initial section $F_0$) that extend continuously to $\Omega\cup(\cup C_i)$ and they both tend to $+\infty$ on each $A_i$ and to $-\infty$ on each $B_j$. 
\begin{enumerate}[label=(\alph*)]
	\item If $\cup C_i\neq\emptyset$ and $u\leq v$ in $\cup C_i$, then $u\leq v$ in $\Omega$. 
	\item If $\cup C_i=\emptyset$, then $u=v+c$ for some $c\in\R$.
\end{enumerate}
\end{theorem}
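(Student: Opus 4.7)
The approach is a flux/divergence argument built on two tools: Lemma~\ref{lemma:factorization}, which gives $\Prod{X_u - X_v, \nabla u - \nabla v} = \tfrac{1}{2}(W_u + W_v)\Norm{N_u - N_v}^2 \geq 0$ with strict inequality wherever $\nabla u \neq \nabla v$, and Lemma~\ref{lemma:flux-boundary}, which identifies the boundary limits of $X_u$ and $X_v$ on the arcs where they take infinite values. Combined with $\mathrm{div}(X_u - X_v) = 0$, these will force $\int_{\partial U} \Prod{X_u - X_v, \eta}$ over a suitable superlevel set of $u - v$ to simultaneously vanish (by the divergence theorem) and be strictly negative (by the factorization identity), yielding a contradiction whenever $u$ and $v$ differ in a way not compatible with the boundary data.

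For part (a), I would argue by contradiction, assuming $\Omega^+ := \{u > v\} \neq \emptyset$. For each $t > 0$ set $U_t := \{u > v + t\}$; since $u \leq v$ continuously on $\cup C_i$, we have $\overline{U_t} \cap (\cup C_i) = \emptyset$, so $\partial U_t \cap \partial\Omega \subseteq \cup A_i \cup \cup B_j$. Sard's theorem applied to $u - v \in \mathcal{C}^\infty(\Omega)$ lets me choose a regular value $t > 0$ small enough that $U_t \neq \emptyset$, so the level set $L_t = \{u - v = t\}$ is a smooth $1$-submanifold of $\Omega$. After the standard truncation by $\mu$-geodesic disks of radius $\varepsilon$ around the vertices of $\Omega$ (as in the proof of Proposition~\ref{prop:uniqueness}) and letting $\varepsilon \to 0$, the divergence theorem on $U_t$ gives
\[ 0 = \int_{\partial U_t} \Prod{X_u - X_v, \eta},\]
with $\eta$ the outward conormal to $U_t$. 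By Lemma~\ref{lemma:flux-boundary}(1), $X_u$ and $X_v$ extend continuously to $\cup A_i$ and $\cup B_j$ with the \emph{same} limit ($+\mu\eta_0$ on $A_i$, $-\mu\eta_0$ on $B_j$, with $\eta_0$ the outer conormal to $\Omega$), so $X_u - X_v$ vanishes on these arcs and only $\int_{L_t} \Prod{X_u - X_v, \eta}$ survives. On $L_t$ the outward conormal to $U_t$ is a negative multiple of $\nabla(u - v)$, which is nonzero by regularity, so Lemma~\ref{lemma:factorization} gives $\Prod{X_u - X_v, \eta} < 0$ pointwise on $L_t$. This contradicts the vanishing above, so $\Omega^+ = \emptyset$ and $u \leq v$.

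For part (b), with no $C_i$ components the same argument applies verbatim for every $t \in \R$: the inclusion $\partial U_t \cap \partial\Omega \subseteq \cup A_i \cup \cup B_j$ is automatic, and the vanishing of $X_u - X_v$ on these arcs still holds. The contradiction at regular $L_t$ therefore forces $L_t$ to be empty for almost every $t$ in the range of $u - v$. By Sard's theorem this is only possible when $u - v$ is constant, as desired.

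The main subtlety is the behavior of $U_t$ near the vertices of $\Omega$, where two boundary arcs meet and $u$ and $v$ may fail to be defined. This is resolved by the $\varepsilon$-truncation: the integrals of $\Prod{X_u - X_v, \eta}$ over the small arcs at distance $\varepsilon$ from each vertex are bounded in absolute value by $2\mu$ times the length of the arc and thus tend to zero. A minor additional point is that $U_t$ may have several connected components; the argument is applied component by component, each of which is handled identically.
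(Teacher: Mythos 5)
Your proposal is correct and follows essentially the same route as the paper: the divergence theorem on a superlevel set of $u-v$ truncated near the vertices, with Lemma~\ref{lemma:factorization} forcing a strictly negative contribution from the interior level set and Lemma~\ref{lemma:flux-boundary} showing that the arcs $A_i$, $B_j$ contribute nothing since $X_u$ and $X_v$ have the same boundary limit there. The only (harmless) cosmetic differences are that you use Sard's theorem to select a regular level $t$ where the paper perturbs $v$ by a small constant, and in part (b) you conclude via ``every value strictly between $\inf(u-v)$ and $\sup(u-v)$ would be critical'' where the paper normalizes $v$ so that both $\{u>v\}$ and $\{u\leq v\}$ are nonempty.
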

	
\begin{proof}
Let $w=u-v$ and assume that $U=\{p\in\Omega:w(p)>0\}\neq\emptyset$ to reach a contradiction, which proves item (a), but also item (b) if we previously add a negative constant to $v$ so that $\{p\in\Omega:w(p)\leq 0\}$ is not empty either. We will use and extend the notation and arguments of Proposition~\ref{prop:uniqueness}. By adding a small positive constant to $v$ to assume that $\nabla w\neq 0$ along $\partial U$ and $w<0$ on $\cup C_i$ and restrict to a connected component $\widetilde U_\varepsilon$. Since $u$ and $v$ define minimal graphs, the divergence theorem guarantees that
\begin{equation}\label{eq:thm-cero}
	\Flux(u,\partial \widetilde U_\varepsilon)-\Flux(v,\partial \widetilde U_\varepsilon)=\int_{\partial \widetilde U_\varepsilon}\langle X_u-X_v,\eta\rangle=0,
\end{equation}
where $\eta$ denotes the outer conormal to $\widetilde U_\epsilon$ along its boundary. We can decompose $\partial\widetilde U_\varepsilon=\Gamma^1_\varepsilon\cup \Gamma^2_\varepsilon\cup\Gamma^3_\varepsilon$, where $\Gamma^1_\varepsilon\subset\widetilde U\subset\mathrm{int}(\Omega)$, $\Gamma^2_\varepsilon$ lies in the boundary of the geodesic balls of radius $\varepsilon$, and $\Gamma^3_\varepsilon\subset(\cup A_i)\cup(\cup B_i)$. Note that the component $\Gamma^3_\varepsilon$ did not appear in Proposition~\ref{prop:uniqueness} because there were no infinite values in there. The assumption that $\nabla w\neq 0$ along $\partial U$ implies that $\Gamma^1_\varepsilon$, $\Gamma^2_\varepsilon$ and $\Gamma^3_\varepsilon$ are away from $(\cup C_i)\cup V$ and consist of finitely many regular curves.

Consider the functions $\alpha_i(\varepsilon)$ defined as in~\eqref{thm:eq3} now for $i\in\{1,2,3\}$. By the same reasons as in Proposition~\ref{prop:uniqueness}, we have that $\lim_{\varepsilon\to 0}\alpha_1(\varepsilon)<0$ and $\lim_{\varepsilon\to 0}\alpha_2(\varepsilon)=0$. However, since $\Flux(u,\Gamma_\varepsilon^3)=\Flux(v,\Gamma_\varepsilon^3)$ by Lemma~\ref{lemma:flux-boundary}, we find that $\alpha_3(\varepsilon)=0$ for all $\varepsilon>0$, i.e., $\Gamma_\varepsilon^3$ does not contribute to the flux. This contradicts the fact that $\alpha_1(\varepsilon)+\alpha_2(\varepsilon)+\alpha_3(\varepsilon)=0$ that follows from Equation~\eqref{eq:thm-cero}.
\end{proof}

\section{Divergence lines}\label{sec:divergence-lines}

In order to prove the existence of solution to the Jenkins--Serrin problem, we will consider the possible limits of a sequence of graphs (not necessarily monotone), a context in which the theory of divergence lines plays an important role~\cite{Mazet,MRR}. Recall that $\pi:\E\to M$ is a Killing submersion whose fibers have infinite length, $\Omega\subset M$ is a relatively compact domain, and we are considering Killing graphs with respect to a fixed zero minimal section $F_0$ defined on a neighborhood of $\overline\Omega$.

Let $\{u_n\}$ be a sequence of minimal graphs in $\Omega$. For each $p\in\Omega$, define the translated minimal graph $\Sigma_n(p)\subset\E$ as the graph of $u_n-u_n(p)$. Observe that $\Sigma_n(p)$ contains the point $q=F_0(p)$ for all $n\in\N$ and has uniformly bounded curvature in a solid vertical cylinder of axis $\pi^{-1}(p)$ whose radius does not depend on $n$ by Lemma~\ref{lem:curvature-estimate}. Since $\pi^{-1}(\Omega(\delta))$ has bounded geometry, standard convergence arguments show that a subsequence of $\Sigma_n(p)$ converges (locally nearby $q$) in the $\mathcal C^k$-topology on compact subsets for all $k\geq 0$ to a minimal surface $\Sigma_\infty$ that contains $q$. In particular, the angle functions $\nu_n$ of $\Sigma_n(p)$ converge to the angle function $\nu_\infty$ of $\Sigma_\infty$, whence $\nu_\infty\geq 0$. Since $\nu_\infty$ lies in the kernel of the stability operator of $\Sigma_\infty$, it satisfies a maximum principle (see~\cite[Ass.~2.2]{MeeksPerezRos}) so that either $\nu_\infty$ is identically zero or $\nu_\infty$ never vanishes.
\begin{itemize}
	\item If the generalized gradients $Gu_n$ are bounded at $p$, then any convergent subsequence of $\Sigma_n(p)$ actually converges to a minimal graph over a metric ball $D_M(p,R)$. By Proposition~\ref{prop:harnack} and Theorem~\ref{thm:compactness}, the radius $R$ can be chosen depending only on $d(p,\partial\Omega)$ and on the value of $\|Gu_n\|$ at $p$.

	\item If the generalized gradients $Gu_n$ (and hence the usual gradients $\nabla u_n$) are not bounded at $p$, up to a subsequence, we can assume that $\nu_n(p)=(\mu^{-2}+\|Gu_n(p)\|^2)^{-1/2}\to 0=\nu_\infty(p)$. This yields $\nu_\infty\equiv 0$ so we can produce a limit surface $\Sigma_\infty$ which is part of a vertical cylinder over a $\mu$-geodesic arc through $p$. Let $L$ be maximal extension of this $\mu$-geodesic arc inside $\Omega$. A standard diagonal argument says that there is a further subsequence $\Sigma_{\sigma(n)}(p)$ which converges uniformly to $\pi^{-1}(L)$ in the $\mathcal C^k$-topology on compact subsets for all $k\geq 0$ (see~\cite[Lemma~4.3]{MRR}) and the unit normals of the sequence become horizontal along $L$. In the Killing-submersion setting, this means that
	\begin{equation}\label{eqn:divergence-line-normal}
	\nu_{\sigma(n)}\longrightarrow 0,\qquad\text{and}\qquad \eta_{\sigma(n)}\longrightarrow\pm\eta_L,
	\end{equation}
	where $\eta_n={\nabla u_n}/{\|\nabla u_n\|}$ and $\eta_L$ is a unit normal to $L$ in the metric $\df s^2_M$ (not in the $\mu$-metric). Actually, to this end and for the arguments hereafter, we could have defined $\eta_n=Gu_n/\|Gu_n\|$ equivalently. 
\end{itemize}

\begin{definition}\label{defi:divergence-line}
A $\mu$-geodesic $L\subset\Omega$ is called a \emph{divergence line} of a sequence of minimal graphs $u_n$ over $\Omega$ if $L$ is maximal (i.e., it is not a proper subset of another $\mu$-geodesic $L'\subset\Omega$) and the graphs of $u_n-u_n(p)$ converge uniformly to $\pi^{-1}(L)$ on compact subsets for some (and hence for all) $p\in L$.
\end{definition}

\subsection{Structure of the set of divergence lines}\label{subsec:divlines}
Throughout this section, we will assume that $\Omega$ is a Jenkins--Serrin domain as in Definition~\ref{def:JS-problem}, though most properties can be easily adapted to general bounded or unbounded domains. A divergence line can be a closed $\mu$-geodesic or an open $\mu$-geodesic arc of finite or infinite length. As a matter of fact, $\mu$-geodesics in an arbitrary surface have self-intersections or accumulation points but next result shows that this is not possible for divergence lines. It is worth mentioning that in other more specific cases in the literature (e.g., in $\mathbb{H}^2\times\R$~\cite{MRR}), this discussion is not pertinent because $\mu$-geodesics are properly embedded automatically.

\begin{lemma}\label{lem:divline-properly-embedded}
Each divergence line of a sequence of minimal graphs in $\Omega$ is properly embedded in $\overline\Omega$. In particular, such a line is either a closed $\mu$-geodesic or an open $\mu$-geodesic arc with finite $\mu$-length connecting two points of $\partial\Omega$.
\end{lemma}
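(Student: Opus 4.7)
The strategy is to extract a lamination limit from the translated graphs $\Sigma_n(p)=F_{u_n-u_n(p)}$ and deduce embeddedness and properness of $L$ from the structure of that lamination. By Lemma~\ref{lem:curvature-estimate}, the $\Sigma_n(p)$ are stable minimal graphs with uniformly bounded second fundamental form on compact subsets of $\pi^{-1}(\Omega)$. Standard compactness results for stable minimal surfaces (e.g., the minimal lamination convergence theorem of Meeks--P\'erez--Ros) yield, after passing to a subsequence, convergence on compact subsets to a minimal lamination $\mathcal{L}$ of an open subset of $\pi^{-1}(\Omega)$ whose leaves are smooth embedded minimal surfaces. By~\eqref{eqn:divergence-line-normal}, the tangent planes of $\Sigma_n(p)$ along $L$ become vertical, so the leaf $\Lambda$ of $\mathcal{L}$ through $F_0(p)$ is tangent to $\xi$ at each of its points by $\mathcal{C}^1$-convergence of tangent planes, and hence is saturated by the flow of $\xi$. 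Therefore $\Lambda$ is a vertical cylinder, and the definition of divergence line forces $\Lambda=\pi^{-1}(L)$. Since leaves are embedded, $L$ has no self-intersections.

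Next I would rule out that $L$ accumulates at an interior point $q\in\Omega\setminus L$. If it did, then $\pi^{-1}(q)$ would lie in the closed support of $\mathcal{L}$, so some leaf $\Lambda'$ of $\mathcal{L}$ passes through $F_0(q)$. Since the sheets of $\pi^{-1}(L)$ accumulating at $\pi^{-1}(q)$ are vertical cylinders over the pieces of $L$ close to $q$, the $\mathcal{C}^k$-limit $\Lambda'$ inherits a vertical tangent plane at each of its points and so is itself a vertical cylinder $\pi^{-1}(L')$ over a $\mu$-geodesic $L'\ne L$ through $q$. But then the actual limit of $\Sigma_{\sigma(n)}(p)$ strictly contains $\pi^{-1}(L)\cup\pi^{-1}(L')\supsetneq\pi^{-1}(L)$, contradicting Definition~\ref{defi:divergence-line}. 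The same argument combined with Proposition~\ref{prop:H-invariant} (which characterizes minimal vertical cylinders as those over $\mu$-geodesics) justifies the identification of $\Lambda'$ as a $\mu$-geodesic cylinder.

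To close the proof, I would use that the unit $\mu$-speed parameterization $\gamma\colon I\to L$ is a proper, 1-Lipschitz map into $\Omega$: propriety follows from Step~2, and the Lipschitz constant comes from $|\gamma'|_\mu=1$. Together with the compactness of $\overline\Omega$ and the fact that $\mu$-geodesics extend smoothly past any interior point of $M$, maximality of $L$ forces either $L$ to be a closed $\mu$-geodesic (if $\gamma$ is periodic) or $I=(a,b)$ to be bounded with $\gamma$ extending continuously to $[a,b]$ and $\gamma(a),\gamma(b)\in\partial\Omega$; in the latter case the $\mu$-length equals $b-a<\infty$. The main obstacle I expect is Step~2: upgrading the bare existence of a leaf $\Lambda'$ through $F_0(q)$ to the geometric conclusion that $\Lambda'$ is a vertical cylinder requires a careful use of the smoothness of the lamination near a non-isolated leaf, so that verticality is genuinely transferred from the accumulating sheets of $\pi^{-1}(L)$ to the limit. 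A secondary delicate point in Step~3 is excluding that $\gamma$ spirals toward $\partial\Omega$ with infinite length, which relies on the smooth extension of $\gamma$ as a $\mu$-geodesic in $M$ past any interior point and on maximality in $\Omega$.
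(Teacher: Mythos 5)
Your Step 1 (no self-intersections) is sound, though heavier than necessary: the paper gets the same conclusion directly from the uniqueness of the limit of the normalized gradients $\eta_{\sigma(n)}=\nabla u_{\sigma(n)}/\|\nabla u_{\sigma(n)}\|$ in~\eqref{eqn:divergence-line-normal}, which cannot converge at a single point $p$ to the normals of two transverse branches of $L$. The genuine gap is in Step 2. Your contradiction rests on the claim that if the limit set of $\Sigma_{\sigma(n)}(p)$ contained another vertical cylinder $\pi^{-1}(L')$, this would ``strictly contain $\pi^{-1}(L)$'' and violate Definition~\ref{defi:divergence-line}. But that definition only asserts that the translated graphs converge to every compact subset of $\pi^{-1}(L)$; it does not say the limit set equals $\pi^{-1}(L)$. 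A single sequence can and typically does have many divergence lines simultaneously (even uncountably many, see Remark~\ref{rmk:infinite-divlines}), so finding a second cylinder $\pi^{-1}(L')$ in the limit is no contradiction at all --- it would merely exhibit another divergence line. More fundamentally, a minimal lamination is perfectly compatible with a leaf accumulating onto itself or onto another leaf (that is precisely what distinguishes laminations from foliations), so no amount of care with the smoothness of $\mathcal L$ near a non-isolated leaf will produce the contradiction you want. The obstruction to accumulation has to come from the \emph{graphical} nature of the $\Sigma_{\sigma(n)}$, which your argument never uses at this step.

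The paper's actual mechanism is the following: if $L$ accumulates at $p\in\Omega$, take a transversal $\Gamma$ through $p$ meeting the accumulating subarcs $L_k$; since $\Sigma_{\sigma(n)}$ converges to the compact set $K_m=\cup_{k\le m}\cup_{t\in[-1,1]}\phi_t(F_0(L_k))$ and is a vertical graph, the profile curve $\Sigma_{\sigma(n)}\cap\pi^{-1}(\Gamma)$ must oscillate (go up and down) at least $m$ times in an arbitrarily narrow strip, producing local height maxima $q_k$; the Uniform Graph Lemma~\ref{lem:uniform-graph-lemma} then forces a Euclidean graph of \emph{uniform} radius and controlled bend centered at $q_k$, which cannot fit in such a narrow strip for $m$ large. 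This is the idea your proposal is missing. Relatedly, your Step 3 flags but does not resolve the case where $L$ accumulates on $\partial\Omega$ (e.g.\ spirals onto a closed boundary geodesic): maximality in $\Omega$ and smooth extendability of $\mu$-geodesics in $M$ do not exclude this, and the paper needs a separate variant of the oscillation/uniform-graph argument (using item (3) of Lemma~\ref{lem:uniform-graph-lemma} to force the graphs to escape $\pi^{-1}(\Omega)$) to handle it.
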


\begin{proof}
First, if a divergence line $L$ has a self-intersection at $p\in\Omega$, we find a contradiction. Consider a compact subset $K\subset\pi^{-1}(L)$ that contains some $q\in\pi^{-1}(p)$ in the interior. Given a translated subsequence $\Sigma_{\sigma(n)}$ that uniformly converges to $K$, their unit normals also converge uniformly to the normal of $K$ at $q$. Since the self-intersection of $L$ is transverse (because $L$ is a $\mu$-geodesic), this contradicts the uniqueness of limit of $\nabla u_{\sigma(n)}/\|\nabla u_{\sigma(n)}\|$ as stated in~\eqref{eqn:divergence-line-normal}.

Now we shall assume that $L$ accumulates on some $p\in\Omega$ and find a contradiction again. Let $\Sigma_{\sigma(n)}$ be a translated subsequence that uniformly converges to $\pi^{-1}(L)$ on compact subsets and let $U_q$ be a neighborhood of $q=F_0(p)$ where harmonic coordinates exist (see Lemma~\ref{lem:uniform-graph-lemma}). Accumulation at $p$ gives a sequence $p_k\in L$ converging to $p$ and disjoint closed subarcs $L_k\subset L\cap \pi(U_q)$ of fixed length centered at $p_k\in L_k$ that converge to a limit $\mu$-geodesic arc $L_\infty$ through $p$. We can also assume that a $\mu$-geodesic arc $\Gamma\subset\Omega$ orthogonal to $L$ at $p$ intersects all the arcs $L_k$ transversely. For each $k\in\N$, consider $K_m=\cup_{k=1}^m\cup_{t\in[-1,1]}\phi_t(F_0(L_k))$, which is a compact subset of $\pi^{-1}(L)$. Since $\Sigma_{\sigma(n)}$ converges uniformly to $K_m$ for any fixed $m$, the curve $\Sigma_{\sigma(n)}\cap\pi^{-1}(\Gamma)$ must go up and down many times, so there must be local maxima  $q_k\in\pi^{-1}(\Gamma)\cap\Sigma_{\sigma(n)}$ of the height of this curve over $F_0$ (indeed as many as desired by making $m$ and $n$ large enough), see Figure~\ref{fig:divergence1}. Since $q$ is bounded away from the boundaries $\partial\Sigma_n$ (uniformly on $n$), we can translate vertically so that each $q_k$ lies in $U_q$ and the uniform graph lemma~\ref{lem:uniform-graph-lemma} implies that an intrinsic ball of $\Sigma_{\sigma(n)}$ centered at $q_k$ of uniform radius is an Euclidean graph in the harmonic coordinates in $U_q$ (that also contains all the points $F_0(p_k)$). This is clearly a contradiction when $m$ and $n$ are large because these uniform graphs cannot go up and down in arbitrarily narrow vertical strips. Note that they are also vertical graphs (not only graphs in the Euclidean sense over the tangent plane).

\begin{figure}
	\centering\includegraphics[width=12cm]{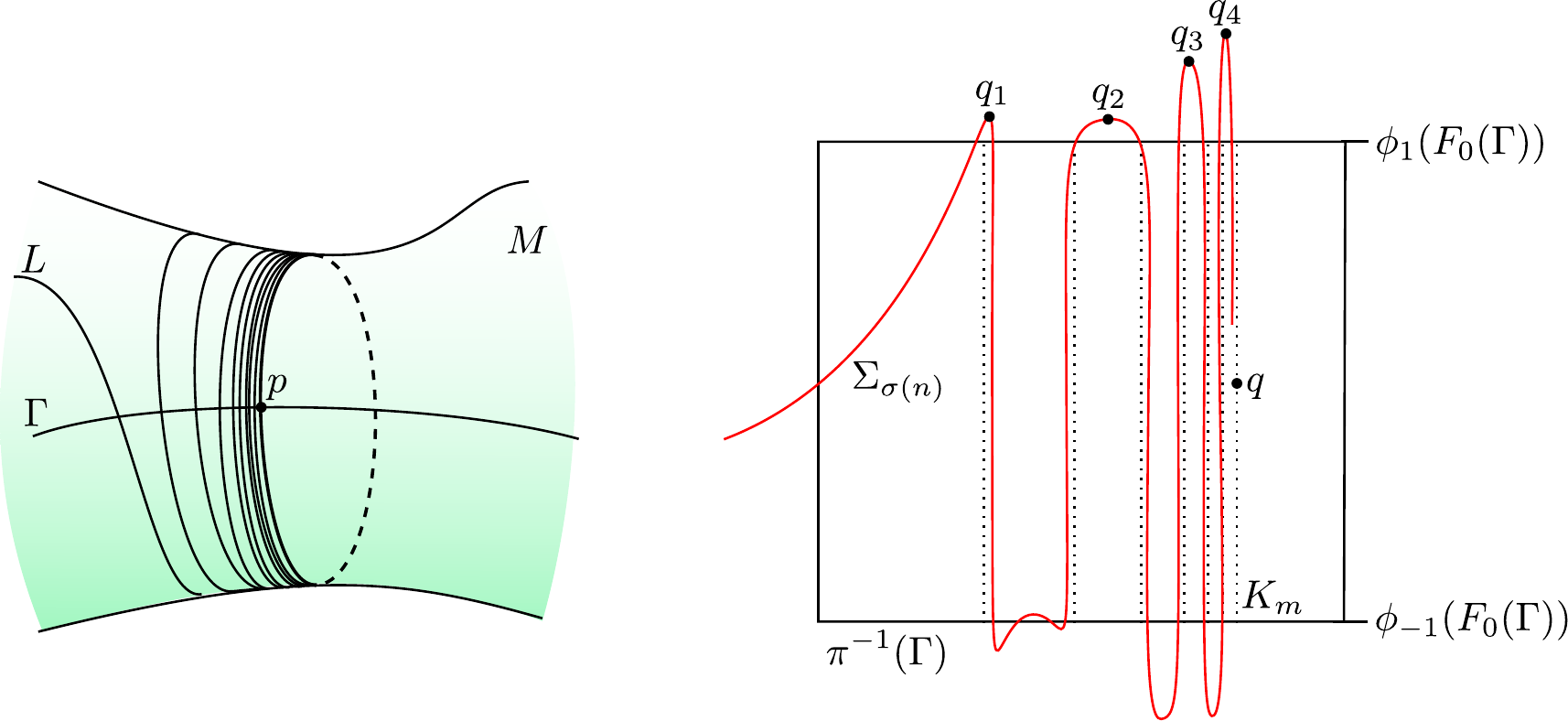}
	\caption{A divergence line that accumulates at some $p\in\overline\Omega$ (left) and the profile curve in the intersection $\pi^{-1}(\Gamma)\cap\Sigma_{\sigma(n)}$ (right). The dotted vertical lines represent the compact set $K_m$.}
	\label{fig:divergence1}
\end{figure} 

A similar argument discards the possibility that $L$ accumulates at some $p\in\partial\Omega$. In this case, $p$ belongs to the $\mu$-geodesic arc $L_\infty\subset\partial\Omega$ so the maxima $q_k$ in the above paragraph are bounded away from $\partial\Omega$  (the arcs $L_k$ converging to $L_\infty$ have fixed length). The contradiction arises again when $m$ is large because the uniform graph lemma implies that the Euclidean graphs in harmonic coordinates on $U_q$ must escape $\pi^{-1}(\Omega)$ by item (3) of Lemma~\ref{lem:uniform-graph-lemma} if $\pi(q_k)$ is close enough to $\partial\Omega$ (which is always the case for $m$ large).
\end{proof}

We show next that a divergence line cannot end at the interior of a component of $\partial\Omega$ where uniformly continuous boundary values have been prescribed. The proof of a similar result in $\mathbb{H}^2\times\mathbb{R}$~\cite[Prop.~4.8]{MRR} strongly relies on reflections about horizontal geodesics, so we will need a different argument giving a slightly more general result. Note that Lemma~\ref{lem:divline-boundary-values} applies if all the $u_n$ have continuous fixed values at $C$ but also when the value $\pm n$ is assigned to $u_n$ on $C$, which is the case of the sequence~\eqref{eqn:sequence} leading to the solution of the Jenkins--Serrin problem.

\begin{lemma}\label{lem:divline-boundary-values}
Let $\{u_n\}$ be a sequence of minimal graphs over $\Omega$ and let $C\subset\partial\Omega$ be an open $\mu$-convex arc (possibly $\mu$-geodesic). If each $u_n$ can be extended continuously to $\Omega\cup C$ and $\{u_n|_C-u_n(p)\}$ converges uniformly on $C$ to a continuous function $f:C\to\R$ for some $p\in C$, then no divergence line of $\{u_n\}$ ends at $p$.  
\end{lemma}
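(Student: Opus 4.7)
The plan is to argue by contradiction. Suppose a divergence line $L$ of $\{u_n\}$ ends at $p \in C$. Normalizing by $v_n = u_n - u_n(p)$, I would have $v_n(p) = 0$ and $v_n|_C \to f$ uniformly on $C$ with $f(p) = 0$; after passing to a subsequence, the upward unit normals of the graphs $\Sigma_n(p)$ of $v_n$ converge uniformly on compact subsets of $\pi^{-1}(L)$ to a horizontal unit vector $+\eta_L$, where $\eta_L$ denotes a unit normal to $L$ in $M$. The strategy is to derive a contradiction by studying the tangent-plane limit of $\Sigma_n(p)$ along $L$ as we approach the boundary point $F_0(p)$, and to confront it with the boundary behavior on $\pi^{-1}(C)$.

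First I would produce interior points $q_n = L(s_n) \in L$ with $s_n \to 0^+$ and $v_n(q_n) \to 0$ via the continuity of each $v_n$ at $p$ and a diagonal argument, so that $F_{v_n}(q_n) \to F_0(p)$ in $\mathbb{E}$. The Uniform Graph Lemma~\ref{lem:uniform-graph-lemma} together with the convergence of unit normals to $+\eta_L$ gives that the tangent plane of $\Sigma_n(p)$ at $F_{v_n}(q_n)$ converges to the vertical plane $\Pi$ spanned by $L'(p)$ and $\xi_p$. Combining this with the curvature estimates of Lemma~\ref{lem:curvature-estimate} and the boundary regularity of minimal graphs with continuous boundary values, the tangent plane of $\Sigma_n(p)$ at the boundary point $F_0(p)$ will also converge to $\Pi$. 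On the other hand, the boundary curve $F_{v_n}(C)$ passes through $F_0(p)$ with horizontal tangent component $C'(p) \neq 0$, which must lie in the tangent plane of $\Sigma_n(p)$ at $F_0(p)$ for every $n$; passing to the limit forces $C'(p) \in \Pi$, whose horizontal span is $\mathbb{R}\cdot L'(p)$, so $C'(p) \parallel L'(p)$: the $\mu$-geodesic $L$ must be tangent to $C$ at $p$.

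In the transverse case this is an immediate contradiction. If $C$ is $\mu$-geodesic at $p$, the tangency plus uniqueness of $\mu$-geodesics with given initial data forces $L = C$, contradicting $L \subset \Omega$ while $C \subset \partial\Omega$. In the remaining case, $C$ is strictly $\mu$-convex at $p$ and $L$ is tangent to $C$ there: then $\Pi$ coincides with the tangent plane of the vertical cylinder $\pi^{-1}(C)$ at $F_0(p)$, so the minimal graphs $\Sigma_n(p)$ become asymptotically tangent to $\pi^{-1}(C)$ at the shared boundary point from the mean-convex side, since $\pi^{-1}(C)$ has strictly positive mean curvature with respect to the inner conormal of $\Omega$ by Proposition~\ref{prop:H-invariant}; this is ruled out by a boundary Hopf-type maximum principle. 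The main obstacle I expect will be the rigorous boundary-point transfer of the tangent plane convergence from the interior approximants $F_{v_n}(q_n)$ to the boundary point $F_0(p)$, which requires carefully interleaving the boundary regularity of $v_n$ up to $C$ with the interior Uniform Graph Lemma and the rapid verticalization along $L$ near its endpoint.
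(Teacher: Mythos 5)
Your overall strategy (contradiction via the behaviour of the graphs near the endpoint $p$ of the divergence line) is the right one, but the central step --- producing points $q_n\in\Sigma_n$ with $F_{v_n}(q_n)\to F_0(p)$ \emph{and} unit normals converging to the horizontal vector $\eta_L$ --- has a genuine gap. The convergence in the definition of a divergence line is only uniform on \emph{compact} subsets of $\pi^{-1}(L)$, and $L$ is an open arc, so there is no control of the normals near the fiber $\pi^{-1}(p)$. Your diagonal argument pulls the parameter $s_n$ in opposite directions: forcing $v_n(L(s_n))\to 0$ requires $s_n$ smaller than a threshold depending on $n$ (continuity of $v_n$ at $p$ gives no uniformity in $n$), while forcing $\nu_n(L(s_n))\to 0$ requires $s_n$ to stay \emph{above} a threshold depending on $n$; nothing guarantees these are compatible, and indeed the graph must flatten somewhere between the vertical cylinder over $L$ and the bounded boundary data on $C$. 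The subsequent transfer of tangent planes to the boundary point is also unsupported: the curvature estimate of Lemma~\ref{lem:curvature-estimate} degenerates as $d_M(\pi(q),\partial\Omega)\to 0$, so the radius in the Uniform Graph Lemma shrinks to zero there unless one first shows $d_{\Sigma_n}(q_n,\partial\Sigma_n)$ is bounded below, and merely continuous boundary data does not make $\Sigma_n$ a $\mathcal C^1$ surface up to $F_0(p)$, so ``the tangent plane at $F_0(p)$'' need not exist. Finally, the concluding Hopf-type comparison for surfaces that are only ``asymptotically tangent'' to $\pi^{-1}(C)$ is not a rigorous maximum-principle application.

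The paper's proof sidesteps all of this by never approaching $\partial\Omega$ at the level of the boundary curve. After normalizing by $u_n(p_0)$ with $p_0\in L$ and passing to a subsequence with $v_n(p)>0$, the boundary values of $v_n$ on a small subarc $C'\ni p$ are bounded below by $-\tfrac12$; one then looks at the compact piece $K_\varepsilon^-$ of $\pi^{-1}(L)$ at heights in $[-2,-1]$ over $\gamma([\varepsilon,\ell-\varepsilon])$, where the divergence-line convergence \emph{does} apply. The points $q_n\to\phi_{-3/2}(F_0(\gamma(\varepsilon)))$ obtained this way lie a definite vertical distance below $\partial\Sigma_n$, so $d_{\Sigma_n}(q_n,\partial\Sigma_n)$ is uniformly bounded away from zero and Lemma~\ref{lem:uniform-graph-lemma} yields an almost-vertical Euclidean graph of radius independent of $\varepsilon$ around $q_n$; since $L$ meets $C$ transversally (automatic, as a $\mu$-geodesic cannot be tangent to a $\mu$-convex boundary arc from inside), this uniform graph must cross $\pi^{-1}(C)$ for $\varepsilon$ small, i.e., $\Sigma_n$ escapes $\pi^{-1}(\Omega)$ below its own boundary --- the contradiction. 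To repair your argument you would need to import precisely this device of descending to a height separated from the boundary data; as written, the proof does not go through.
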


\begin{proof}
Assume by contradiction that a divergence line $L$ ends at $p$. Since $L$ is $\mu$-geodesic and $C$ is $\mu$-convex, their intersection at $p$ is transverse. We will parametrize $L$ as $\gamma:[0,\ell]\to L$ with unit speed and $\gamma(0)=p$, and define for $0<\varepsilon<\frac{\ell}{2}$ the compact sets
\begin{align*}
K_\varepsilon^-&=\bigcup_{t\in[-2,-1]}\phi_t(F_0(\gamma([\varepsilon,\ell-\varepsilon]))),&
K_\varepsilon^+&=\bigcup_{t\in[1,2]}\phi_t(F_0(\gamma([\varepsilon,\ell-\varepsilon]))).
\end{align*}
Let $v_n$ be a subsequence of $u_n-u_n(p_0)$, where $p_0\in L$, that uniformly converges to $\pi^{-1}(L)$ on compact subsets, in particular on $K_\varepsilon^+\cup K_\varepsilon^-$. Since each $v_n$ is continuous on $\Omega\cup C$, we can consider a further subsequence to assume without loss of generality that $v_n(p)>0$ for all $n$ (the case $v_n(p)\leq 0$ for all $n$ is similar). Choose a subarc $C'\subset C$ containing $p$ such that $v_n|_{C'}\geq-\frac{1}{2}$, which does not depend on $n$ by the uniform continuity on $C$ given by the statement. If $\Sigma_n$ denotes the graph of $v_n$, we can find a sequence $q_n\in\Sigma_n$ approaching $q_\infty=\phi_{-3/2}(\gamma(\epsilon))\in K_\epsilon^-$, see Figure~\ref{fig:divergence2}. This sequence verifies that $d_\Sigma(q_n,\partial\Sigma_{n})$ is uniformly bounded away from zero because $v_n\geq-\frac{1}{2}$ on $C'$ and the normal to $\Sigma_{n}$ at $q_n$ also approaches the normal to $\pi^{-1}(L)$ at $q_\infty$.

\begin{figure}
	\centering\includegraphics[width=6cm]{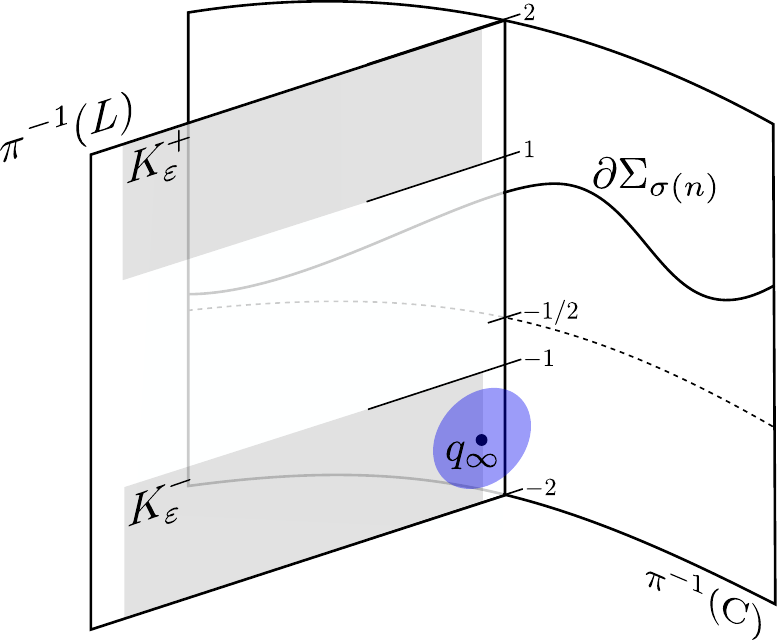}
	\caption{The compact sets $K_\varepsilon^{\pm}$ and the disk (in blue) that supports the graph which leaves the domain in the proof of Lemma~\ref{lem:divline-boundary-values}. The dashed line represents the uniform lower bound for $\partial\Sigma_{\sigma(n)}$.}
	\label{fig:divergence2}
\end{figure} 

If we start the above argument with $\varepsilon$ small enough so that $q_\infty$ lies in a prescribed harmonic coordinate chart centered at $\phi_{-3/2}(F_0(p))$, the uniform graph lemma~\ref{lem:uniform-graph-lemma} implies that $\Sigma_{n}$ is an Euclidean graph over an almost vertical plane transverse to $\pi^{-1}(C)$, so $\Sigma_{n}$ escapes $\pi^{-1}(\Omega)$ for small $\varepsilon$ (the uniform radius does not depend on $\varepsilon$), which is the desired contradiction. In this argument, we have used item (2) in Lemma~\ref{lem:uniform-graph-lemma} strongly, since it implies that the bend of the graphs (in harmonic coordinates) is uniformly bounded.
\end{proof}

Assume that the divergence lines of a sequence of minimal graphs $\{u_n\}$ are disjoint and denote by $\mathcal D$ the union of all such lines. We can find a subsequence $\{u_{\sigma(n)}\}$ such that items (A)-(C) below hold (this was proved in $\mathbb{H}^2\times\mathbb{R}$, see~\cite[Prop.~4.4, Lemma~4.6, Rmk.~4.7]{MRR} and the proof extends literally to the general case of Killing submersions). Let $\Omega_1$ be a connected component of $\Omega\sm\mathcal D$ and let $p_1\in\Omega_1$.
\begin{enumerate}[label=(\Alph*)]
 	\item The translated sequence $u_{\sigma(n)}-u_{\sigma(n)}(p_1)$ converges uniformly on compact subsets of $\Omega_1$ to a minimal graph $u_\infty^1$ over $\Omega_1$.
 
	\item If $L\subset\partial\Omega_1$ is a divergence line of $\{u_{\sigma(n)}\}$ and $\eta_L$ is the outer unit conormal to $\Omega_1$ along $L$, then $\eta_{\sigma(n)}\to\pm\eta_L$ and $u_{\sigma(n)}-u_{\sigma(n)}(p_1)\to\pm\infty$ uniformly on compact subsets of $L$ (the sign $\pm$ is the same for both limits). The flux of $u_{\sigma(n)}$ in $\Omega_1$ along $L$ with respect to $\eta_L$ gives (with the same choice of sign)
 	\[\lim_{n\to\infty}\Flux(u_{\sigma(n)},L)=\Flux(u_\infty^1,L)=\pm\Length_\mu(L).\]
 	
 	\item If $\Omega_2$ is an adjacent component of $\Omega\sm\mathcal D$ such that $L\subset\partial\Omega_1\cap \partial\Omega_2$, then $\{u_{\sigma(n)}-u_{\sigma(n)}(p_1)\}$ diverges uniformly to $\pm\infty$ on compact subsets of $\Omega_2$ provided that $\eta_{\sigma(n)}\to\pm\eta_L$ along $L$ (the sign $\pm$ is the same for both limits).
\end{enumerate}
In particular, two adjacent connected components of $\Omega\sm\mathcal D$ (at both sides of an isolated line of divergence $L$) cannot coincide. This topological obstruction discards some possible configurations of divergence lines. 

It is important to notice that some divergence lines can disappear after passing to a subsequence but no new ones are created. Our next goal is to refine a sequence of minimal graphs so that the divergence lines are disjoint, whence they enjoy the above properties (A)-(C). In $\mathbb{H}^2\times\mathbb{R}$, this is not difficult since there are finitely many vertices and any two vertices are joined by a unique geodesic. However, over a relatively compact Jenkins--Serrin domain in general Killing submersions there might be an uncountable infinite number of divergence lines (see Remark~\ref{rmk:infinite-divlines}) so we need again a new approach. We will have to deal with the two new situations depicted in Figure~\ref{fig:divergence-uncountable}:
\begin{itemize}
 	\item Infinitely many closed disjoint $\mu$-geodesics (as in the case of parallel circles in a round cylinder).
 	\item Infinitely many disjoint $\mu$-geodesics joining two fixed vertices (as in the case of meridians joining the north and south poles of the round sphere).
 \end{itemize}

\begin{remark}\label{rmk:infinite-divlines}
The above two situations can actually occur (we will prove later that this is not the case if the JS-conditions are satisfied). In $\mathbb{E}=\mathbb{S}^2\times\mathbb{R}$ with $\tau\equiv 0$ and $\mu\equiv 1$, choose $\Omega$ as a wedge of $\mathbb{S}^2$ bounded by two meridians and let $u_n$ take the value $n$ and $-n$ on these meridians, then $u_n$ spans a screw-motion invariant helicoid in $\mathbb{S}^2\times\mathbb{R}$. Therefore, the limit of $\{u_n\}$ is a foliation of $\Omega\times\R$ by vertical cylinders, i.e., all geodesics of $\Omega$ joining the its vertices are divergence lines of $\{u_n\}$.

Likewise, in $\mathbb{E}=(\mathbb{S}^1\times\R)\times\R$ with $\tau\equiv 0$ and $\mu\equiv 1$, consider the relatively compact domain $\Omega=\mathbb{S}^1\times(-1,1)$ and let $u_n$ take the values $\pm n$ on $\mathbb{S}^1\times\{\pm 1\}$. Then $u_n$ spans a graph over $\Omega$ which is totally geodesic: it is a plane in the Euclidean space $\R^2\times\R$, the universal cover of $\E$. These planes converge to vertical planes everywhere (i.e., tangent to the second factor $\R$), so the divergence lines of $\{u_n\}$ are the closed geodesics $\mathbb{S}^1\times\{t_0\}$ with $-1<t_0<1$.
\end{remark}

\begin{figure}
	\centering\includegraphics[width=9cm]{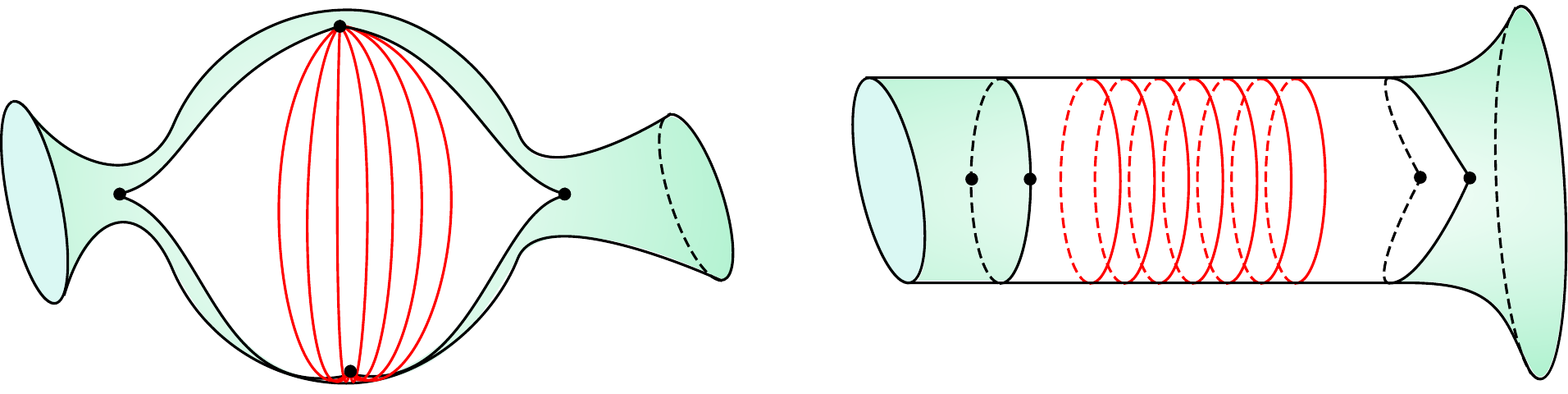}
	\caption{Two Jenkins--Serrin quadrilaterals containing open subsets isometric to part of a sphere (left) or a cylinder (right) so they have uncountably many potential divergence lines.}
	\label{fig:divergence-uncountable}
\end{figure} 

We will group the divergence lines in isotopy classes of closed $\mu$-geodesics or open $\mu$-geodesic arcs (with respect to their common endpoints in the latter case, i.e., the vertices remain fixed under the isotopy). Observe that, given such a class $\mathcal I$ and disjoint $L_1,L_2\in\mathcal I$,  the closed curve $\overline L_1\cup\overline L_2$ is the boundary of a topological annulus (resp.\ disk) contained in $\Omega$ if $\mathcal I$ consists of closed curves (resp.\ open arcs). 

\begin{definition}\label{defi:isotopy-region}
Assume that all divergence lines are disjoint and their union is $\mathcal D$. The connected components of $\Omega\sm\mathcal D$ will be called \emph{convergence components}. 

 Given a isotopy class of divergence lines $\mathcal I$ and $L_1,L_2\in\mathcal{I}$, we will denote by $R(L_1,L_2)\subset\Omega$ the open disk or annulus with boundary $\overline L_1\cup\overline L_2$. We will call \emph{isotopy region} the disk or annulus $R_{\mathcal I}=\cup_{L_1,L_2\in\mathcal I}R(L_1,L_2)$.
\end{definition}

The closure of the divergence set (proved next in Lemma~\ref{lemma:closure-divlines}) will play a crucial role. Recall that a $\mu$-geodesic $L$ is a limit of $\mu$-geodesics $L_n$ if and only if there is a sequence $p_n\in L_n$ converging to some $p\in L$ such that the unit tangent vectors to $L_n$ at $p_n$ converge to an unit tangent vector to $L$ at $p$. This convergence is uniform in compact subsets (of the common arc-length parameter of these $\mu$-geodesics) due to the smooth dependence of $\mu$-geodesics on their initial conditions.

\begin{lemma}\label{lemma:closure-divlines}
Let $\{u_n\}$ be a sequence of minimal graphs over $\Omega$.
\begin{enumerate}[label=\emph{(\arabic*)}]
	\item Any limit of divergence lines of $\{u_n\}$ is either a $\mu$-geodesic component of $\partial\Omega$ or again a divergence line of $\{u_n\}$.
	\item Each isotopy class of divergence lines not isotopic to any $\mu$-geodesic component of $\partial\Omega$ is closed (with respect to the convergence of $\mu$-geodesics).
\end{enumerate}
\end{lemma}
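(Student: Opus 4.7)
\emph{Proof plan for (1).} Suppose a sequence of divergence lines $\{L_k\}$ converges as $\mu$-geodesics to a $\mu$-geodesic $L$. If $L$ coincides with a $\mu$-geodesic component of $\partial\Omega$ there is nothing to prove, so assume there is an interior point $p\in L\cap\Omega$; by the definition of convergence of $\mu$-geodesics we may pick $p_k\in L_k$ with $p_k\to p$. For each $k$, the divergence-line property of $L_k$ supplies a subsequence $\{u_{m_k(j)}\}_j$ of $\{u_n\}$ such that the graphs of $u_{m_k(j)}-u_{m_k(j)}(p_k)$ converge on compact subsets to $\pi^{-1}(L_k)$ as $j\to\infty$. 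I would then perform a Cantor diagonal extraction: fix an exhaustion by compact sets $K_k\subset\pi^{-1}(L)$ containing $F_0(p)$, and choose indices $j_k$ such that $m_k(j_k)$ is strictly increasing in $k$ and the graph of $u_{m_k(j_k)}-u_{m_k(j_k)}(p_k)$ lies within Hausdorff distance $1/k$ of $\pi^{-1}(L_k)$ on a compact set containing $K_k$. Since $\pi^{-1}(L_k)\to\pi^{-1}(L)$ (by smooth dependence of $\mu$-geodesics on initial conditions) and $p_k\to p$, the subsequence $v_k:=u_{m_k(j_k)}$ of $\{u_n\}$ satisfies that the graph of $v_k-v_k(p_k)$ converges to $\pi^{-1}(L)$ on compact subsets, exhibiting $L$ as a divergence line. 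Maximality of $L$ in $\Omega$ follows from maximality of the $L_k$'s together with the openness of $\Omega$: any interior extension of $L$ would, by continuous dependence on initial conditions, produce an interior extension of $L_k$ for large $k$, contradicting maximality.

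\emph{Proof plan for (2).} Let $\mathcal I$ be an isotopy class of divergence lines not isotopic to any $\mu$-geodesic component of $\partial\Omega$, and let $L_k\in\mathcal I$ converge as $\mu$-geodesics to $L$. By part (1), $L$ is either a $\mu$-geodesic component of $\partial\Omega$ or a divergence line. The first alternative is ruled out by the hypothesis on $\mathcal I$: if $L$ were a boundary $\mu$-geodesic component $A$, then for $k$ large $L_k$ (which is properly embedded in $\overline\Omega$ by Lemma~\ref{lem:divline-properly-embedded}) would be $C^0$-close to $A$, whence $\overline{L_k}\cup A$ would bound a thin embedded disk or annulus in $\overline\Omega$ providing an isotopy of $L_k$ to $A$, contradicting that $\mathcal I$ is not isotopic to $A$. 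Hence $L$ must be a divergence line. A parallel argument then proves $L\in\mathcal I$: vertex endpoints of $L_k$ (if any) lie in the finite set $V$ and are therefore constant in $k$ for $k$ large, so they coincide with the corresponding endpoints of $L$, while $L_k$ and $L$ being $C^0$-close embedded curves bound a thin embedded annulus (in the closed-curve case) or disk (in the arc case) in $\overline\Omega$ realizing the required isotopy fixing vertices.

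\emph{Main obstacle.} The most delicate step is the diagonal extraction in (1): one must guarantee simultaneously that the chosen indices $m_k(j_k)$ form a genuine strictly increasing subsequence of $\{u_n\}$, and that the resulting convergence to $\pi^{-1}(L)$ is uniform on a prescribed exhaustion, while the base points $p_k\to p$ drift and the target cylinders $\pi^{-1}(L_k)$ themselves drift towards $\pi^{-1}(L)$. This must be done carefully enough to combine the Hausdorff estimates for each $k$ with the geodesic-convergence $L_k\to L$ into a single uniform convergence statement for $v_k-v_k(p_k)$.
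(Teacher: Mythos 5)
Your proposal is correct and follows essentially the same route as the paper: part (1) is proved by exactly the same diagonal extraction over the subsequences converging to each $\pi^{-1}(L_k)$ combined with the smooth convergence $\pi^{-1}(L_k)\to\pi^{-1}(L)$, and part (2) is deduced from (1) plus the observation that a limit of $\mu$-geodesics in a given isotopy class stays in that class. The paper states the diagonal argument and the isotopy-preservation step more tersely; your extra bookkeeping (the choice of $j_k$, the maximality of the limit line, the arc case with fixed vertex endpoints) fills in details the paper leaves implicit but does not change the argument.
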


\begin{proof}
Let $L_n$ be a convergent sequence of divergence lines not converging to a component of $\partial\Omega$, so there exist $p_n\in L_n$ converging to some $p_\infty\in \Omega$ with unit tangent vectors $v_n$ to $L_n$ at $p_n$ that converge to a unit vector $v_\infty$ at $p_\infty$, and let $L_\infty$ be $\mu$-geodesic through $p_\infty$ with unit tangent vector $v_\infty$. Observe that $\pi^{-1}(L_n)$ converge as minimal surfaces to $\pi^{-1}(L_\infty)$ in the $\mathcal C^k$-topology on compact subsets for all $k$. Since we can find subsequences of $u_n$ spanning graphs converging to any $\pi^{-1}(L_n)$ on compact subsets, a diagonal argument shows that we can also find a subsequence of $u_n$ converging to $\pi^{-1}(L_\infty)$, so $L_\infty$ is a divergence line.

Item (2) follows readily from item (1) since a limit of simple closed $\mu$-geodesics in some isotopy class is a simple closed $\mu$-geodesic in the same isotopy class.
\end{proof}

Next, we will prove that a sequence of minimal graphs over a Jenkins--Serrin domain can be refined so that the divergence lines become disjoint and can be grouped into finitely many isotopy classes, and each isotopy class $\mathcal I$ defines the exclusive region $R_{\mathcal I}$ (see Definition~\ref{defi:isotopy-region}) containing the (possibly uncountably many) divergence lines of $\mathcal I$ but no other lines in other isotopy classes. Furthermore, the lines of $\mathcal I$ separate countably many regions whose \emph{divergence heights} are linearly ordered, that is, we only go up (or down) whenever we go through $R_{\mathcal I}$ transversely to the lines of $\mathcal I$. It was suggested in~\cite[Rmk.~4.5]{MRR} that disjoint divergence lines can be obtained even in the uncountable case, so next result settles this question.

\begin{proposition}\label{prop:disjoint-lines}
Given a sequence of minimal graphs $\{u_n\}$ over a Jenkins--Serrin domain $\Omega$, there is a subsequence $\{u_{\sigma(n)}\}$ whose divergence lines are pairwise disjoint, whence it has finitely many nonempty isotopy classes of divergence lines. 

Let $\mathcal I$ be one of such isotopy classes with at least two elements and assume that no $\mu$-geodesic component of $\partial\Omega$ is isotopic to the elements of $\mathcal I$. 
\begin{enumerate}[label=\emph{(\arabic*)}]
	\item There is a linear order $\prec$ in $\mathcal I$ such that $L_1\prec L\prec L_2$ if and only if $L\subset R(L_1,L_2)$.
	\item The ordered set $(\mathcal I,\prec)$ has maximum and minimum elements $L_+,L_-\in\mathcal I$.
	\item All the curves of $\mathcal I$ have the same $\mu$-length.
	\item The order $\prec$ can be choosen uniquely (and we will do so) by assuming that $\eta_{\sigma(n)}$ converges to the unit inner conormal $\eta_{L_-}$ to $R_{\mathcal I}=R(L_-,L_+)$ along $L_-$. 
	\item If $L\in\mathcal I$ is different from $L_-$, the normalized gradients $\eta_{\sigma(n)}$ converge to the outer unit conormal $\eta_L$ to $R(L_-,L)$ along $L$.
	\item Denote by $\mathcal D$ the union of all divergence lines of $\{u_{\sigma(n)}\}$. There are unique distinct convergence components $\Omega_\pm\subset\Omega\sm R_\mathcal I$ with $L_\pm\subset\partial\Omega_\pm$.
	\begin{enumerate}[label=\emph{(\alph*)}]
		\item Given $p\in\Omega_-$ (resp.\ $p\in\Omega_+$), the sequence $\{u_{\sigma(n)}-u_{\sigma(n)}(p)\}$ diverges uniformly to $+\infty$ (resp.\ $-\infty$) on compact subsets of $\Omega_+$ (resp.\ $\Omega_-$).
		\item If $\Omega_0=R(L_1,L_2)\subset R_\mathcal I$ is a convergence component with $L_1\prec L_2$ and $p\in\Omega_0$, then $\{u_{\sigma(n)}-u_{\sigma(n)}(p)\}$ diverges uniformly to $+\infty$ (resp.\ $-\infty$) on compact subsets of $\overline{R(L_2,L_+)}\cup\Omega_+$ (resp.\ $\Omega_-\cup\overline{R(L_-,L_1)}$).
	\end{enumerate}
\end{enumerate}
\end{proposition}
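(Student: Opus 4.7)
The plan is to proceed in three phases: (i) extract a subsequence whose divergence lines are pairwise disjoint and split into finitely many isotopy classes; (ii) analyze the internal structure of a single class $\mathcal I$ (items (1)--(5)); and (iii) identify the convergence components $\Omega_\pm$ together with the divergence heights of (6).

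For (i), the key tool is compactness. By Lemma~\ref{lem:divline-properly-embedded} every divergence line of $\{u_n\}$ is properly embedded with $\mu$-length bounded by the $\mu$-diameter of $\overline\Omega$, and its endpoints lie in the compact skeleton $V\cup(\cup_i A_i)\cup(\cup_i B_i)\cup(\cup_i C_i)$, so the family of possible divergence lines sits in a compact subset of the space of parametrized $\mu$-geodesic arcs (or closed $\mu$-geodesics). A Cantor diagonal argument over a countable dense family provides a subsequence $\{u_{\sigma(n)}\}$ whose set of divergence lines $\mathcal D$ is closed, hence compact. If two of them met transversely at some $p$, normalizing by the common point $p$ would force $\eta_{\sigma(n)}$ at $p$ to converge simultaneously to two distinct horizontal directions, contradicting~\eqref{eqn:divergence-line-normal}; hence the divergence lines of the extracted subsequence are pairwise disjoint. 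A $\mathcal C^1$-small perturbation of a properly embedded simple $\mu$-geodesic with endpoints in the skeleton is isotopic to it, so isotopy classes are open in our topology, and by compactness of $\mathcal D$ only finitely many are nonempty.

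For (ii), fix a class $\mathcal I$ satisfying the hypotheses. Two disjoint isotopic lines in $\mathcal I$ bound an annulus (closed case) or a disk (arc case), so $R(L_1,L_2)$ is unambiguous, and applied to triples this yields the linear order $\prec$ of item (1). Lemma~\ref{lemma:closure-divlines}(2) shows $\mathcal I$ is closed and compactness provides the extrema $L_\pm$ of item (2). The orientation (4) is fixed by passing to a further subsequence so that $\eta_{\sigma(n)}\to\eta_{L_-}$ along $L_-$. For item (5), property (C) applied across $L_-$ gives $u_{\sigma(n)}-u_{\sigma(n)}(p)\to+\infty$ on the convergence component of $R_\mathcal I$ adjacent to $L_-$ whenever $p\in\Omega_-$, and a monotonicity argument (chaining through adjacent convergence components and passing to limits via Lemma~\ref{lemma:closure-divlines}) then forces, for every $L\in\mathcal I\sm\{L_-\}$, the convergence of $\eta_{\sigma(n)}$ along $L$ to the outer conormal of $R(L_-,L)$. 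For item (3), when $L_1\prec L_2$ are adjacent in $\mathcal I$ (no $L\in\mathcal I$ strictly between) the region $R(L_1,L_2)$ is a single convergence component; property (B) yields $\lim\Flux(u_{\sigma(n)},L_i)=\pm\Length_\mu(L_i)$ with opposite signs (by (4)), and the divergence theorem applied to the limit minimal graph on $R(L_1,L_2)$ forces $\Length_\mu(L_1)=\Length_\mu(L_2)$. The general (possibly uncountable) case follows because $\Length_\mu$ is continuous on the compact linearly ordered set $\mathcal I$ and locally constant by the adjacent case, hence constant.

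Finally, for (iii), maximality of $L_+$ together with Lemma~\ref{lemma:closure-divlines} prevents the side of $L_+$ opposite to $R_\mathcal I$ from containing any element of $\mathcal I$; pairwise disjointness of $\mathcal D$ then guarantees that the connected component of $\Omega\sm\mathcal D$ adjacent to $L_+$ on that side is the unique $\Omega_+\subset\Omega\sm R_\mathcal I$, and $\Omega_-$ is obtained symmetrically. In the finite case the divergence statements (6)(a) and (6)(b) reduce to iterating property (C) through the finitely many adjacent components of $R_\mathcal I$ from $\Omega_-$ (or $\Omega_0$) to $\Omega_+$. The main obstacle is the uncountable case, where direct chaining fails. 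The plan is to handle it via a transverse flux argument: for a $\mu$-geodesic arc $\gamma:[a,b]\to\overline{R(L_-,L_+)}\cup\Omega_+$ transverse to $\mathcal I$, the height increment
\[ u_{\sigma(n)}(\gamma(b))-u_{\sigma(n)}(\gamma(a))=\int_a^b\langle\nabla u_{\sigma(n)},\gamma'\rangle\,\df s \]
can be controlled using that $\eta_{\sigma(n)}$ points coherently toward $L_+$ throughout $R_\mathcal I$ (by (5)) while $\|\nabla u_{\sigma(n)}\|\to+\infty$ uniformly on compact subsets of $\mathcal D\cap R_\mathcal I$; a careful flux-type estimate (similar in spirit to item (3)) then forces this integral to $+\infty$, yielding (6)(a) and, with the obvious modification (replacing $\Omega_-$ by $\Omega_0$ and $R(L_-,L_+)$ by $R(L_2,L_+)$), (6)(b).
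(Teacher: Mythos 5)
Your proposal reproduces the overall architecture of the paper's proof, but two of its key steps have genuine gaps, and both occur precisely where the uncountable case (the new difficulty this proposition is designed to handle) bites. First, the disjointness argument: you claim that two divergence lines crossing at $p$ would force $\eta_{\sigma(n)}(p)$ to converge to two distinct horizontal directions. But a divergence line is only a \emph{subsequential} limit, so two crossing lines could be witnessed by two essentially disjoint sub-subsequences and no contradiction arises. The paper avoids this by choosing, for each point $p_m$ of a countable dense set, a nearest divergence line $L_m$ and passing to a nested subsequence that genuinely converges to $\pi^{-1}(L_m)$ (so that no other divergence line \emph{of that subsequence} can cross $L_m$), then diagonalizing, showing that every divergence line of the diagonal sequence is a limit of the selected $L_m$'s, and that limits of disjoint $\mu$-geodesics are disjoint. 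Your sketch omits all of this, and the direct contradiction you invoke is simply not available for a general divergence line of the diagonal subsequence.

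Second, for items (3), (5) and (6) you reduce to \emph{adjacent} pairs of lines and then chain or pass to limits, asserting in particular that a continuous function on the compact ordered set $\mathcal I$ which is constant on adjacent pairs must be constant. This is false in general: if $\mathcal I$ is a Cantor-like family (which cannot be excluded a priori), Cantor-function-type behaviour is compatible with local constancy on the gaps, and likewise there need be no chain of adjacent convergence components joining $L_-$ to a given $L$. The paper sidesteps adjacency entirely: each $u_{\sigma(n)}$ is a smooth graph over all of $R(L_1,L_2)$ no matter which divergence lines lie inside, so the divergence theorem gives $\Flux(u_{\sigma(n)},L_1)+\Flux(u_{\sigma(n)},L_2)=0$ for \emph{every} pair, and letting $n\to\infty$ yields both the equality of $\mu$-lengths and the orientation statement (5) with no chaining. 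Finally, your item (6) in the uncountable case is only announced as a ``plan'': the integral $\int_a^b\langle\nabla u_{\sigma(n)},\gamma'\rangle\,\df s$ has negative contributions on the convergence components near their boundaries (where the gradient also blows up) that your sketch does not control, and the set $\mathcal D\cap\gamma$ may have measure zero, so uniform blow-up of $\|\nabla u_{\sigma(n)}\|$ on it proves nothing by itself. The paper instead argues by contradiction: if the height remained bounded above at some $p_0$ beyond $L_-$, the graph would have to descend steeply somewhere along a transversal ordered by $\prec$, producing a divergence line $L\in\mathcal I$ along which $\eta_{\sigma(n)}$ converges to the \emph{inner} conormal of $R(L_-,L)$, contradicting item (5). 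You would need to replace your flux estimate by an argument of this kind (or complete it with a genuine decomposition of $\gamma$ into a neighborhood of $\mathcal D$ and its complement) for item (6) to stand.
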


\begin{proof}
Let $\{p_m\}$ be a countable and dense subset of $\Omega$ and $\mathcal D_1$ be the union of all divergence lines of $\{u_n\}$, which is a relatively closed subset of $\Omega$. Let $L_1\subset\mathcal D_1$ be a divergence line closest to $p_1$ (possible not unique), and let $\{u_n^1\}$ be a subsequence of $\{u_n\}$ such that the graphs of $u_n^1-u_n^1(p)$ converge uniformly on compact subsets to $\pi^{-1}(L_1)$ for a fixed $p\in L_1$. Note that all divergence lines of $\{u_n^1\}$ (other than $L_1$) do not intersect $L_1$ because the normalized gradients $\eta_n^1$ converge along $L_1$ to an unit normal to $L_1$, and any other $\mu$-geodesic intersects $L_1$ transversely.

By induction, suppose that we have a subsequence $\{u_n^{k-1}\}$ and let $\mathcal D_k$ its (relatively closed) set of divergence lines. Consider a divergence line $L_k\subset\mathcal D_k$ closest to $p_k$ and define $\{u_n^k\}$ as a subsequence of $\{u_n^{k-1}\}$ such that the graphs of $u_n^k-u_n^k(p)$ converge uniformly on compact subsets to $\pi^{-1}(L_k)$ for a fixed $p\in L_k$. As in the above argument, this leads to divergence lines $L_1,\ldots,L_k$ which are pairwise disjoint (after removing possible repetitions) and also disjoint with any other divergence line of $\{u_n^k\}$. We will consider the diagonal sequence $u_{\sigma(n)}=u_n^n$ and the sequence of pairwise disjoint divergence lines $\{L_m\}$ we have constructed in this way: each $L_m$ is disjoint with any other divergence line of $\{u_{\sigma(n)}\}$. Observe that the limits of elements of $\{L_m\}$ are disjoint too (if two limits intersect, then there must be sufficiently close elements of $\{L_m\}$ that also intersect since the convergence of $\mu$-geodesics is uniform on compact subsets and their intersections are always transverse). 

If $\{L_m\}$ is either finite (after removing repetitions) or contains all divergence lines of $u_{\sigma(n)}$, then we are done since this means that all divergence lines are disjoint. So, we will assume that $L$ is divergence line not in the sequence and prove that it is a limit of elements of $\{L_m\}$, which also proves that all divergence lines are disjoint. No point of $L$ can be at positive distance from $\cup L_m$ (otherwise, as the $p_m$ are dense, another divergence line different from any of the $L_m$ should have been chosen in the process). Therefore, there is a sequence of points $x_k\in L_{m_k}$ converging to some $x_\infty\in L$. If $v_k$ is an unit normal to $L_{m_k}$ at $x_k$, then up to its sign it must converge to an unit normal to $L$ at $x_{\infty}$ (otherwise, some of the $L_{m}$ would intersect $L$). Therefore, $L$ is a limit of elements of $\{L_m\}$ and we are done.

Since $\Omega$ has finite topology, it is diffeomorphic to a surface of finite genus minus some points given by its boundary curves. It is well known that such a surface cannot have infinite non-homotopic disjoint closed curves (e.g., see~\cite[Prop.~2.3.3]{Arettines}); also, it is clear that there cannot be infinitely disjoint non-isotopic arcs joining vertices of $\Omega$. Since we have already shown that divergence lines are disjoint, the number of nonempty isotopy classes is finite. Observe that there are no nulhomotopic divergence lines by the maximum principle. 

Let $\mathcal I$ be an isotopy class as in the statement, and let us prove items (1)--(6):
\begin{enumerate}[label=(\arabic*)]
	\item Given fixed distinct $L_1,L_2\in\mathcal I$, we set $L_1\prec L_2$. Then, there are three possible scenarios for another $L\in\mathcal I$, namely $L\in R(L_1,L_2)$, $L_1\in R(L,L_2)$ or $L_2\in R(L,L_1)$, in which case we set $L_1\prec L\prec L_2$ or $L\prec L_1\prec L_2$ or $L_1\prec L_2\prec L$, respectively. Given another $L'\in\mathcal I$, we can use the same argument to compare $L$ and $L'$, which easily leads to a total order in $\mathcal I$.
	\item The region $R_{\mathcal I}=\cup_{L_1,L_2\in\mathcal I}R(L_1,L_2)$ is nonempty because $\mathcal I$ contains at least two elements, so $R_{\mathcal I}$ is again a topological disk (resp.\ annulus) if $\mathcal I$ consists of arcs (resp.\ closed curves). Given $p\in\partial R_{\mathcal I}$ not a vertex of $\Omega$, there is a (possibly constant) sequence $\{L_n\}$ in $\mathcal I$ which accumulates at $p$, and hence a limit $\mu$-geodesic $L$ through $p$. Since no component of $\partial\Omega$ is isotopic to the elements of $\mathcal I$ by hypothesis, Lemma~\ref{lemma:closure-divlines} ensures that $L\in\mathcal I$. This divergence line $L$ must also be either a maximum or a minimum of $\prec$ by construction. Since $\partial R_{\mathcal I}$ cannot consist of just one divergence line by the maximum principle, we infer the existence of both the maximum $L_+$ and the minimum $L_-$, whence $R_{\mathcal I}=R(L_-,L_+)$. 

	\item[(3)--(5)] Given distinct $L_1,L_2\in\mathcal I$, the divergence theorem on $R(L_1,L_2)$ gives
	\begin{equation}\label{eqn:lem-isotopy:eqn1}
	0=\int_{L_1\cup L_2}\langle X_{u_{\sigma(n)}},\eta\rangle=\Flux(u_{\sigma(n)},L_1)+\Flux(u_{\sigma(n)},L_2),
	\end{equation}
	where the flux is computed with respect to the outer unit conormal $\eta$ to $R(L_1,L_2)$ along its boundary. Taking limits in~\eqref{eqn:lem-isotopy:eqn1} as $n\to\infty$, we get that $0=\pm\Length_\mu(L_1)\pm\Length_\mu(L_2)$, where the signs depend on whether $\eta_{\sigma(n)}$ converges to $\eta$ or $-\eta$. Clearly, both signs must be different so the result of the sum is zero, which proves item (3). In the case of $R(L_-,L_+)$, this means that $\eta_{\sigma(n)}$ converges to the inner conormal to $R_{\mathcal I}$ along $L_-$ and to the outer conormal to $R_{\mathcal I}$ along $L_+$ up to reversing the order, so we have item (4). The very same argument proves item (5).
	\item[(6)] Assume by contradiction that there is no such component $\Omega_+$ or $\Omega_-$. This means that there is a sequence of divergence lines outside $R_{\mathcal I}$ that accumulate at some $p\in\partial R_{\mathcal I}$. Since there only finitely many non-empty isotopy classes of them, we can assume that they all belong to the same class, but this is clearly a contradiction since isotopy classes are closed and hence $\partial R_{\mathcal I}$ would intersect an element of an isotopy class other than $\mathcal I$. This proves the existence of the components $\Omega_\pm$ given in the statement.

	Subitems (a) and (b) can essentially proved in the same way and reflect the idea that all convergence components in $R_{\mathcal I}$ lie at different levels, which are linearly ordered, and this also applies to the adjacent ones $\Omega_\pm$. We will only consider the case $p\in\Omega_-$ as in item (a), since other cases are analogous. We first recall that $u_{\sigma(n)}-u_{\sigma(n)}(p)$ diverge uniformly to $+\infty$ on compact subsets of $L_-$ because $L_-\subset\partial\Omega_-$. Assume by contradiction that $u_{\sigma(n)}(p_0)-u_{\sigma(n)}(p)$ remains bounded from above for some $p_0\in R(L_-,L_+)\cup L_+\cup\Omega_+$ (after possibly taking a further subsequence). Let $\gamma:[0,1]\to\Omega$ be a regular curve joining $p$ and $p_0$ meeting transversely the elements of $\mathcal I$ following the order given by $\prec$. The value of $u_{\sigma(n)}(\gamma(t))-u_{\sigma(n)}(p)$ becomes arbitrarily high as $n\to\infty$ for points $\gamma(t)\in L_-$ and then remains bounded from above at $\gamma(1)=p_0$. This means that the graph of $u_{\sigma(n)}$ contains arbitrarily vertical directions that must subconverge to part of a cylinder over a divergence line (thus some $L\in\mathcal I$). However, since the value of the graph decreases from an arbitrarily high value as we cross $L$, the normalized gradient $\eta_{\sigma(n)}$ must converge to the inner conormal to $R(L_-,L)$, in contradiction to item (5).\qedhere
\end{enumerate}
\end{proof}

\begin{remark}
An interesting fact that may help understand the nature of the subsequence $\{u_{\sigma(n)}\}$ given by Proposition~\ref{prop:disjoint-lines} is that all its divergence lines are not \emph{removable}, in the sense that any further subsequence of $\{u_{\sigma(n)}\}$ has the same set of divergence lines. This is a consequence of the diagonal argument in the proof.
\end{remark}

Under the JS-conditions, there will not be divergence lines in the isotopy class of a boundary component of type $A_i$ or $B_i$ (Lemma~\ref{lemma:isotopy-boundary}). However, most of the ideas of Proposition~\ref{prop:disjoint-lines} can be adapted easily in the case that there is such a $\mu$-geodesic $\Gamma\subset\partial\Omega$ (recall that the sides of $\Omega$ are not divergence lines, which must be interior to $\Omega$ by definition). We can extend the order $\prec$ to $\mathcal I\cup\{\Gamma\}$ and $\Gamma$ acts as a maximum or minimum, in which case, one of the domains $\Omega_+$ or $\Omega_-$ is not defined. 

Also, an isotopy class $\mathcal I$ with just one element is not a problem since it can be understood using the above items (A)-(C) as in~\cite{MRR}. Because of the following corollaries, the structure of the divergence set is as depicted in Figure~\ref{fig:RI}.

\begin{figure}
	\centering\includegraphics[width=\textwidth]{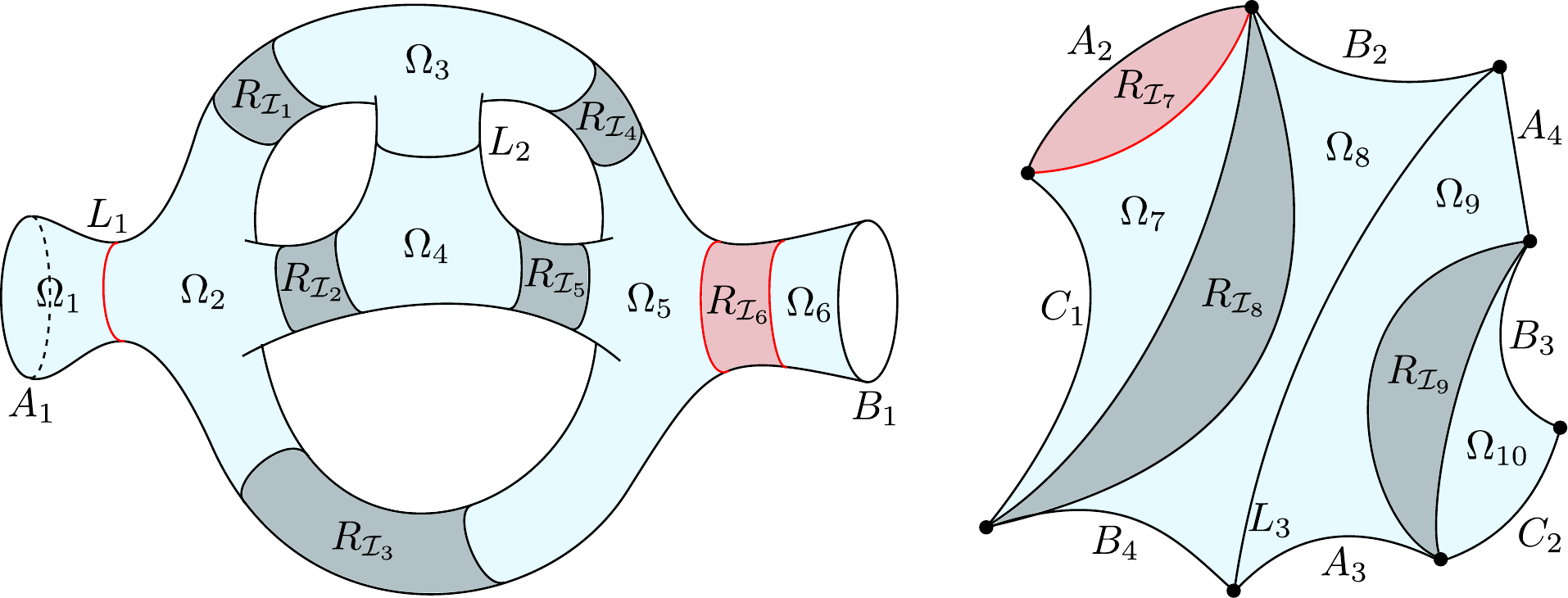}
	\caption{Two possible configurations of the divergence set in a surface of genus three (left) and in a genus zero octagon (right) with some isotopy classes with at least two elements (contained in the dark regions) plus several isolated divergence lines which are unique in their isotopy classes. Note that $L_1$, $R_{\mathcal I_6}$ and $R_{\mathcal I_7}$ (in red color) cannot exist under the JS-conditions by Lemma~\ref{lemma:isotopy-boundary}.}
	\label{fig:RI}
\end{figure}

\begin{corollary}\label{coro:components-are-polygons}
Under the assumptions of Proposition~\ref{prop:disjoint-lines}, any connected component of $\Omega\sm\mathcal D$ is an inscribed $\mu$-polygon.
\end{corollary}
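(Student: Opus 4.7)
I would verify that $\Omega_0$, a connected component of $\Omega \setminus \mathcal{D}$, has $\partial\Omega_0$ satisfying the two conditions of Definition~\ref{def:mu-polygon}. The first condition is immediate: by Lemma~\ref{lemma:closure-divlines}, an accumulation of divergence lines at a point of $\Omega$ is itself a divergence line, so $\mathcal{D}$ is relatively closed in $\Omega$ and $\Omega_0$ is automatically open and connected.

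For the second condition, I would analyze the connected components of $\partial\Omega_0$, each of which is a simple closed curve contained in $\overline{\mathcal{D}} \cup \partial\Omega$. Lemma~\ref{lem:divline-properly-embedded} describes the structure of the pieces coming from $\overline{\mathcal{D}}$: every divergence line is either a simple closed $\mu$-geodesic or a $\mu$-geodesic arc of finite $\mu$-length landing on $\partial\Omega$ at two points. Together with the fact that the arcs $A_i$ and $B_j$ of $\partial\Omega$ are themselves $\mu$-geodesic, all candidate smooth pieces of $\partial\Omega_0$ are $\mu$-geodesic.

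The key step, and the main technical hurdle, is to show that the endpoints of divergence lines on $\partial\Omega$ always coincide with vertices of $\Omega$. I would resolve this by applying Lemma~\ref{lem:divline-boundary-values} to the Jenkins--Serrin approximating sequence $\{u_n\}$, where each $u_n$ is the minimal graph given by Theorem~\ref{thm:generalized-existence} with value $n$ on $A_i$, $-n$ on $B_j$, and $f_k$ on $C_k$. For any interior point $p$ of such a boundary arc, the translated boundary values $u_n|_{\text{arc}} - u_n(p)$ are independent of $n$ along the arc (identically zero on $A_i$ and $B_j$, and equal to $f_k - f_k(p)$ on $C_k$), so they converge trivially and uniformly to a continuous function. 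Lemma~\ref{lem:divline-boundary-values} then precludes any divergence line of $\{u_n\}$ from ending at $p$, forcing every landing point of a divergence line into the vertex set of $\Omega$.

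Combining these observations, each connected component of $\partial\Omega_0$ is either a simple closed $\mu$-geodesic --- from a closed divergence line or from a closed $\mu$-geodesic component of $\partial\Omega$ --- or a cyclic concatenation of $\mu$-geodesic arcs drawn from divergence lines and from subarcs of $A_i, B_j$ components of $\partial\Omega$, whose corners lie at vertices of $\Omega$. This matches the definition of an inscribed $\mu$-polygon, so $\Omega_0$ is one.
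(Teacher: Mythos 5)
Your proposal correctly assembles the local structure of $\partial\Omega_0$: the smooth pieces are $\mu$-geodesic by Lemma~\ref{lem:divline-properly-embedded}, and your use of Lemma~\ref{lem:divline-boundary-values} with the approximating sequence~\eqref{eqn:sequence} to push the endpoints of divergence arcs into the vertex set is precisely the observation made in the paper just before that lemma. However, there is a genuine gap: Definition~\ref{def:mu-polygon} requires $\mathcal P$ to be a \emph{finite} union $\Gamma_1\cup\dots\cup\Gamma_k$ of disjoint curves, and you never show that $\partial\Omega_0$ has only finitely many components. This is not automatic here, because $\mathcal D$ may consist of uncountably many divergence lines (Remark~\ref{rmk:infinite-divlines}); a priori a single convergence component could be bounded by infinitely many of them (think of the complement of a Cantor family of parallel closed $\mu$-geodesics in a flat cylinder, where one must rule out a component touching infinitely many circles). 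The paper's proof of this corollary is devoted entirely to that point: by Proposition~\ref{prop:disjoint-lines} the divergence lines fall into finitely many isotopy classes, each linearly ordered by $\prec$, so if infinitely many lines bounded $\Omega_0$ there would be three isotopic ones $L_1\prec L_2\prec L_3$ all on $\partial\Omega_0$; this is impossible because the only components of $\Omega\setminus\mathcal D$ that can have both $L_1$ and $L_3$ on their boundary are $R(L_1,L_3)$ and the component lying outside $\overline{R(L_1,L_3)}$, and neither has $L_2$ on its boundary. Hence at most two lines per isotopy class bound $\Omega_0$, which gives finiteness. Your argument needs this (or an equivalent) counting step to be complete.

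A secondary caveat: the corollary is stated for any sequence satisfying the hypotheses of Proposition~\ref{prop:disjoint-lines}, while your vertex-endpoint step invokes the particular boundary data of~\eqref{eqn:sequence}. The paper is itself somewhat implicit on this point (it simply records that the boundary lines are closed curves or arcs joining vertices, citing Lemmas~\ref{lem:divline-properly-embedded} and~\ref{lem:divline-boundary-values}), so this is a minor issue of scope rather than an error, but you should either restrict the statement to the sequence~\eqref{eqn:sequence} or verify the uniform-convergence hypothesis of Lemma~\ref{lem:divline-boundary-values} for the sequence at hand.
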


\begin{proof}
A component $\Omega_0\subset\Omega\sm\mathcal D$ is bounded by disjoint $\mu$-geodesic lines that can be either arcs joining two vertices of $\Omega$ or closed curves. We only need to prove that the number of such lines is finite, so assume by contradiction that it is not. Since there are finitely-many isotopy classes of divergence lines and finitely-many isotopy classes of the sides of $\partial\Omega$, we conclude that there are at least three (infinitely many, indeed) isotopic distinct divergence lines $L_1,L_2,L_3$ that can be assumed to satisfy $L_1\prec L_2\prec L_3$. This is clearly a contradiction because the only two possible connected components of $\Omega\sm\mathcal D$ with $L_1\cup L_3$ in their boundary are $R(L_1,L_3)$ and $\Omega\sm\overline{R(L_1,L_3)}$, none of which has $L_2$ as part of the boundary.
\end{proof}

\begin{corollary}\label{coro:finite-convergence-components}
Under the assumptions of Proposition~\ref{prop:disjoint-lines}, $(\Omega\sm\mathcal D)\sm\cup_{\mathcal I}R_{\mathcal I}$ has finitely many connected components.
\end{corollary}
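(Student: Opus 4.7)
The plan is to rewrite the open set in question as the complement in $\Omega$ of an explicit finite union of closed subsets, and then invoke the finite topological type of $\Omega$. Proposition~\ref{prop:disjoint-lines} guarantees that the sequence $\{u_{\sigma(n)}\}$ has only finitely many nonempty isotopy classes of divergence lines $\mathcal I_1,\ldots,\mathcal I_k$. I would split them into the \emph{multi-element} classes, those with $|\mathcal I_j|\geq 2$, for which Proposition~\ref{prop:disjoint-lines} provides extremal elements $L_\pm^j\in\mathcal I_j$ and the isotopy region $R_j=R_{\mathcal I_j}=R(L_-^j,L_+^j)$, and the \emph{singleton} classes, whose unique element I would denote by $L^j$.

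The key reduction is the set-theoretic identity
\[
(\Omega\setminus\mathcal D)\setminus\bigcup_{\mathcal I}R_{\mathcal I}\;=\;\Omega\setminus X,\qquad X:=\bigcup_{j\text{ multi}}\overline{R_j}\,\cup\,\bigcup_{j\text{ singleton}}L^j.
\]
To establish it, I would use item~(1) of Proposition~\ref{prop:disjoint-lines}: for each multi-element class $\mathcal I_j$, every $L\in\mathcal I_j\setminus\{L_-^j,L_+^j\}$ satisfies $L_-^j\prec L\prec L_+^j$ and therefore lies entirely inside $R_j$, while $L_\pm^j\subset\partial R_j\subset\overline{R_j}$. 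Hence $\mathcal D\subset X$ and clearly $\bigcup_{\mathcal I}R_{\mathcal I}=\bigcup_{j\text{ multi}}R_j\subset X$, so $\mathcal D\cup\bigcup_{\mathcal I}R_{\mathcal I}\subset X$; the reverse inclusion is immediate because $\overline{R_j}=R_j\cup L_-^j\cup L_+^j\subset\bigcup R_{\mathcal I}\cup\mathcal D$ and each singleton $L^j$ belongs to $\mathcal D$.

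Once the identity is in place, it remains to show that $\Omega\setminus X$ has finitely many connected components. Each $\overline{R_j}$ is a closed topological disk or annulus whose topological boundary in $\Omega$ consists of the properly embedded $\mu$-geodesics $L_\pm^j$ (Lemma~\ref{lem:divline-properly-embedded}), and each $L^j$ is likewise a properly embedded arc or closed curve. Thus $X$ is a finite union of such tame closed subsets of the Jenkins--Serrin domain $\Omega$, whose closure $\overline\Omega$ is a compact surface with piecewise-regular boundary and hence of finite topological type. Removing finitely many properly embedded closed disks, annuli, arcs and closed curves from such a surface leaves only finitely many connected components, which completes the proof. The main (and quite minor) obstacle is just the bookkeeping behind the displayed identity; once that is verified, the statement reduces to a routine topological counting with no further analytical content.
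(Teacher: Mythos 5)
Your proof is correct and follows essentially the same route as the paper: both arguments reduce the complement to the finitely many relevant curves (the extremal lines $L_\pm$ of each multi-element class together with the singleton lines, at most $2m$ in total) and then conclude by an elementary count using the finite topological type of $\Omega$. Your rewriting of the set as $\Omega\setminus X$ is just a cleaner packaging of the paper's observation that only these $2m$ lines can bound the components in question, each from at most two sides.
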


\begin{proof}
If $m$ is the number of nonempty isotopy classes of divergence lines, there are at most $2m$ lines that can act as boundary components of the connected components of $(\Omega\sm\mathcal D)\sm\cup_{\mathcal I}R_{\mathcal I}$. Since each of these lines can only be in the boundary of two such connected components, we conclude that the number is finite.
\end{proof}

\subsection{Existence of Jenkins--Serrin graphs}\label{subsec:existence-JS}

Consider a Jenkins--Serrin problem as in Definition~\ref{def:JS-problem}. Theorems~\ref{thm:existence} and~\ref{thm:generalized-existence} yield the existence of minimal graphs $u_n\in\mathcal C^\infty(\Omega)$ with the following boundary conditions:
\begin{equation}\label{eqn:sequence}
u_n=\begin{cases}
n&\text{on }\cup A_i,\\
-n&\text{on }\cup B_i,\\
f_{i,n}&\text{on }C_i,
\end{cases}
\end{equation}
where $f_i$ the continuous function prescribed on the side $C_i$, truncated as $f_{i,n}=\min\{\max\{f_i,-n\},n\}$.
Assume henceforth that the JS-conditions in Theorem~\ref{thm:JS} are satisfied and there is at least one side $A_i$ or $B_i$ (otherwise the JS-conditions become trivial and Theorem~\ref{thm:JS} follows from Theorems~\ref{thm:existence} and~\ref{thm:generalized-existence}).

We will find a subsequence of $\{u_n\}$ that converges uniformly on compact sets of all $\Omega$ (i.e., without divergence lines) and achieves the desired boundary values. By Proposition~\ref{prop:disjoint-lines}, we can start with a subsequence $\{u_{\sigma(n)}\}$ such that all divergence lines are disjoint and can be grouped in finitely-many isotopy classes of either closed $\mu$-geodesics or $\mu$-geodesic arcs joining a pairs of vertices of $\Omega$ (see also Lemmas~\ref{lem:divline-properly-embedded} and~\ref{lem:divline-boundary-values}). We will denote by $\mathcal D$ the union of all divergence lines of $\{u_{\sigma(n)}\}$.

The next lemma completes the picture of the divergence set given by Proposition~\ref{prop:disjoint-lines} by additionally assuming the JS conditions. Note that the flux limits still hold without such conditions (e.g., see~\cite[Lemma~3.6]{MRR}).

\begin{lemma}\label{lemma:isotopy-boundary}
Under the JS-conditions and computing the flux with respect to the outer conormal to $\Omega$ along its boundary, we have that
\begin{align*}
  \lim_{n\to\infty}\Flux(u_n,A_i)&=\Length_\mu(A_i),& \lim_{n\to\infty}\Flux(u_n,B_i)&=-\Length_\mu(B_i).
\end{align*}
Consequently, there are no divergence lines of $\{u_n\}$ isotopic to any $A_i$ or $B_i$.
\end{lemma}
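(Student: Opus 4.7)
My plan is to show that every convergent subsequence of $\{\Flux(u_n,A_i)\}$ has limit $\Length_\mu(A_i)$, and then deduce the divergence line claim from this flux limit by a short computation (the $B_j$ case is symmetric). To begin, I will fix such a subsequence and apply Proposition~\ref{prop:disjoint-lines} to refine it to $\{u_{\sigma(n)}\}$ with pairwise disjoint divergence lines. Since no divergence line can cross the interior of $A_i$ (Lemmas~\ref{lem:divline-properly-embedded} and~\ref{lem:divline-boundary-values}), there is a unique convergence component $\Omega_0$ of $\{u_{\sigma(n)}\}$ with $A_i\subset\partial\Omega_0$, which by Corollary~\ref{coro:components-are-polygons} is an inscribed $\mu$-polygon. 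Fixing $p_0\in\Omega_0$ and passing to a further subsequence, I will have $v_n:=u_{\sigma(n)}-u_{\sigma(n)}(p_0)$ converging on compact subsets of $\Omega_0$ to a minimal graph $u_\infty$ (property~(A) of Proposition~\ref{prop:disjoint-lines}).

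Along $A_i$, the constant $n-u_{\sigma(n)}(p_0)$ is nonnegative by the maximum principle $u_n\leq n$. Up to a further subsequence two cases can occur. If $n-u_{\sigma(n)}(p_0)\to+\infty$, then $u_\infty\to+\infty$ uniformly on $A_i$, so Lemma~\ref{lemma:flux-boundary}(1) gives $\Flux(u_\infty,A_i)=\Length_\mu(A_i)$ and the desired limit follows from the smooth convergence of the fluxes. If instead $n-u_{\sigma(n)}(p_0)$ stays bounded and converges to some $c\geq 0$, I will derive a contradiction with the JS-conditions.

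In this bounded case, I will track the limiting boundary behavior of $u_\infty$ on $\partial\Omega_0$. Any other $A_j\subset\partial\Omega_0$ inherits the finite value $u_\infty\equiv c$, while on any $B_j\subset\partial\Omega_0$ we have $v_n|_{B_j}=-n-u_{\sigma(n)}(p_0)\to-\infty$ uniformly, so $u_\infty\to-\infty$ there by Proposition~\ref{prop:boundary-infinity}. For $C_k\subset\partial\Omega_0$, the local Scherk barriers behind Proposition~\ref{prop:boundary-value}(a) give a uniform bound $u_{\sigma(n)}\leq K$ near $C_k$; combined with $u_{\sigma(n)}(p_0)\to+\infty$, this would force $u_\infty\equiv-\infty$ in a neighborhood of $C_k$ inside $\Omega_0$, which is impossible unless $C_k$ is $\mu$-geodesic, in which case Proposition~\ref{prop:boundary-infinity} still yields $u_\infty\to-\infty$ on $C_k$. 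Finally, for any divergence line $L\subset\partial\Omega_0$, $u_\infty$ cannot tend to $+\infty$: otherwise property~(C) of Proposition~\ref{prop:disjoint-lines} would force $v_n\to+\infty$ on the adjacent convergence component across $L$, contradicting $v_n\leq n-u_{\sigma(n)}(p_0)\to c$. Thus every side of $\mathcal{P}:=\partial\Omega_0$ other than the $A_j$'s contributes exactly $-\Length_\mu$ to the flux of $u_\infty$. The divergence theorem will then give
\[
\sum_{A_j\subset\partial\Omega_0}\Flux(u_\infty,A_j)=\gamma(\mathcal P)-\alpha(\mathcal P),
\]
while Lemma~\ref{lemma:flux-boundary}(2) applied to each $A_j$ (each is $\mu$-geodesic and $u_\infty\equiv c$) yields the strict bound $|\sum\Flux(u_\infty,A_j)|<\alpha(\mathcal P)$. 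Hence $\gamma(\mathcal P)<2\alpha(\mathcal P)$, which contradicts the JS-conditions: either $\mathcal P\neq\partial\Omega$ and JS gives $2\alpha(\mathcal P)<\gamma(\mathcal P)$, or $\mathcal P=\partial\Omega$ in case~(b) and JS gives $\alpha(\partial\Omega)=\beta(\partial\Omega)$, i.e., $\gamma(\partial\Omega)=2\alpha(\partial\Omega)$.

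For the divergence line statement, if $L$ were a divergence line of $\{u_n\}$ isotopic to $A_i$, I will choose $L$ innermost in its isotopy class using the total ordering in Proposition~\ref{prop:disjoint-lines}, so that the topological disk $R$ bounded by $L\cup A_i$ contains no divergence line and hence $\Omega_0=R$. Then the flux balance $\Flux(u_{\sigma(n)},A_i)+\Flux(u_{\sigma(n)},L,\eta_R)=0$, combined with the already established limit $\Flux(u_{\sigma(n)},A_i)\to\Length_\mu(A_i)$ and property~(B) forcing the other flux to tend to $\pm\Length_\mu(L)$, will yield $\Length_\mu(A_i)=\Length_\mu(L)$, directly contradicting the JS-condition $2\Length_\mu(A_i)<\Length_\mu(A_i)+\Length_\mu(L)$ on the inscribed polygon $A_i\cup L$. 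The hardest step will be the boundary analysis of $u_\infty$ on $\partial\Omega_0$ in the bounded case, especially ruling out $u_\infty\to+\infty$ on interior divergence lines via the maximum principle bound and discarding strictly $\mu$-convex $C_k$ via the local Scherk barriers.
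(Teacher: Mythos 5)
Your overall strategy (a flux balance on the convergence component adjacent to $A_i$, then a contradiction with the JS-conditions) is essentially the one used in the paper, and your case analysis in the bounded case, though organized differently from the paper's use of the decreasing sequence $v_n=u_n-n$, leads to the same inequality $\gamma(\mathcal P)\leq 2\alpha(\mathcal P)$. However, there is a genuine gap at the very first step: you assert that ``there is a unique convergence component $\Omega_0$ with $A_i\subset\partial\Omega_0$'' on the grounds that no divergence line crosses or ends at the interior of $A_i$ (Lemmas~\ref{lem:divline-properly-embedded} and~\ref{lem:divline-boundary-values}). Those lemmas do not exclude the possibility that a family of divergence lines \emph{isotopic to} $A_i$ \emph{accumulates on} $A_i$ --- exactly the phenomenon of Remark~\ref{rmk:infinite-divlines} and Figure~\ref{fig:divergence-uncountable} --- in which case no convergence component has $A_i$ in its boundary and your flux computation has nowhere to take place. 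The paper's proof devotes its first paragraph to ruling this out: if divergence lines accumulate on $A_i$, then $A_i$ is a limit of lines $L_n$ in a single isotopy class, all of the same $\mu$-length by item (3) of Proposition~\ref{prop:disjoint-lines}, so $\Length_\mu(A_i)=\Length_\mu(L_1)$ and $\mathcal P=\overline{A_i}\cup\overline{L_1}$ is an inscribed $\mu$-polygon with $\gamma(\mathcal P)=2\alpha(\mathcal P)$, contradicting the JS-conditions. You need this step (or an equivalent one) before $\Omega_0$ can even be introduced.

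The same omission resurfaces in your last paragraph: you ``choose $L$ innermost in its isotopy class,'' but item (2) of Proposition~\ref{prop:disjoint-lines} guarantees extremal elements only for isotopy classes not isotopic to a boundary component; when the class is isotopic to $A_i$, the lines may accumulate on $A_i$ and no innermost line exists. Once accumulation has been excluded as above this is no longer an issue --- and in fact you do not need $L$ innermost at all: for any $L$ isotopic to $A_i$, the divergence theorem on $R(A_i,L)$ together with the flux limit and property (B) already yields $\Length_\mu(A_i)=\Length_\mu(L)$ and hence the forbidden polygon. The remaining steps (the dichotomy on $\sigma(n)-u_{\sigma(n)}(p_0)$, the treatment of the $B_j$, $C_k$ and divergence-line sides of $\partial\Omega_0$, and the strict flux inequality from Lemma~\ref{lemma:flux-boundary}) are sound, modulo the routine justification that fluxes across boundary arcs pass to the limit.
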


\begin{proof}
We will reason for one of the components $A_i$ (the reasoning is completely analogous for a component $B_i$). We will first prove that divergence lines of $\{u_n\}$ cannot accumulate at $A_i$. By contraction, if they happen to accumulate, then $A_i$ is a limit of divergence lines $\{L_n\}$ in the same isotopy class as $A_i$ (isotopy classes of $\mu$-geodesics are closed). Since all the $L_n$ have the same $\mu$-length by Proposition~\ref{prop:disjoint-lines}, so does $A_i$ as a limit $\mu$-geodesic. Therefore, $\mathcal P=\overline A_i\cup \overline L_1$ is an inscribed $\mu$-polygon with $\gamma(\mathcal P)=2\alpha(\mathcal P)$, which is not possible by the JS-conditions.

This means that there is a convergence component $\Omega_i$ with $A_i\subset\partial\Omega_i$. The boundary $\mathcal P=\partial\Omega_i$ is an inscribed $\mu$-polygon and consists of curves of $\partial\Omega$ plus finitely-many divergence lines of $\{u_n\}$ (possibly none if $\Omega_i=\Omega$). The functions $v_n=u_n-n$ take values in $[-2n,0]$ and form a pointwise decreasing sequence by Proposition~\ref{prop:uniqueness}. Since $\{v_n\}$ has the same divergence lines as $\{u_n\}$, it follows that $\{v_n\}$ pointwise diverges to $-\infty$ on $(\cup B_i)\cap\mathcal P$ and on $\mathcal P\sm\partial\Omega$. The divergence theorem on $\Omega_i$ with respect to its outer unit conormal $\eta$ yields 
\begin{equation}\label{lemma:isotopy-boundary:eqn1}
0=\int_{(\cup A_i)\cap \mathcal P}\langle X_{u_n},\eta\rangle+\int_{(\cup B_i)\cap \mathcal P}\langle X_{u_n},\eta\rangle+\int_{\mathcal P\sm\partial\Omega}\langle X_{u_n},\eta\rangle.
\end{equation}
Notice that this computation can be done indistinctly for $u_n$ or $v_n$ since they differ in a vertical translation. The first summand in~\eqref{lemma:isotopy-boundary:eqn1} is bounded by $\alpha(\mathcal P)$ in absolute value, whereas the second and the third summands converge to $-\beta(\mathcal P)$ and $\alpha(\mathcal P)+\beta(\mathcal P)-\gamma(\mathcal P)$, respectively, as $n\to\infty$ (by the same argument as in the proof of Lemma~\ref{lemma:flux-boundary}). If $\Omega_i\neq\Omega$, the limit of~\eqref{lemma:isotopy-boundary:eqn1} as $n\to\infty$ gives $\gamma(\mathcal P)-2\alpha(\mathcal P)\leq 0$, which is not possible by the JS-conditions, whence $\Omega_i=\Omega$ and $\mathcal P=\partial\Omega$. In this case, because of the condition $\alpha(\partial\Omega)=\beta(\partial\Omega)$, the limit of~\eqref{lemma:isotopy-boundary:eqn1} gives 
\begin{equation}\label{lemma:isotopy-boundary:eqn2}
\lim_{n\to\infty}\sum_i\Flux(u_n, A_i)=\beta(\partial\Omega)=\alpha(\partial\Omega)=\sum_i\Length_\mu(A_i).
\end{equation}
Since $\Flux(u_n,A_i)\leq\Length_\mu(A_j)$ for all $i$, this identity easily particularizes to each component $A_i$, that is, the sum in~\eqref{lemma:isotopy-boundary:eqn2} can be removed.

Finally, assume by contradiction that there is a divergence line $L$ isotopic to $A_i$, so the open region $R(A_i,L)\subset\Omega$ is either a disk or an annulus depending on whether $A_i$ is an open arc or a closed curve. The very same argument as in item (3) of Proposition~\ref{prop:disjoint-lines} implies that $\Length_\mu(A_i)=\Length_\mu(L)$, where we now take into account~\eqref{lemma:isotopy-boundary:eqn2}. This in turn implies that the inscribed $\mu$-polygon $\mathcal P=\partial R(A_i,L)$ verifies $\gamma(\mathcal P)=2\alpha(\mathcal P)$, which is the desired contradiction.
\end{proof}

We prove next that, under the JS-conditions, that any subsequence of $\{u_n\}$ can be further refined to get rid of all divergence lines.

\begin{lemma}\label{lemma:convergence-JS}
For each $p\in\Omega$, any subsequence of $\{u_n\}$ has a further subsequence $\{u_{\sigma(n)}\}$ such that $\{u_{\sigma(n)}-u_{\sigma(n)}(p)\}$ uniformly converges on compact subsets of $\Omega$ to a solution of the minimal surface equation.
\end{lemma}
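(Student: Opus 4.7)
The plan is to apply Proposition~\ref{prop:disjoint-lines} to the given subsequence, obtaining a refinement $\{u_{\sigma(n)}\}$ whose divergence lines $\mathcal D$ are pairwise disjoint, and then derive a contradiction from the Jenkins--Serrin hypotheses if $\mathcal D$ is nonempty. After a further diagonal extraction (and passing to a sub-subsequence so that $u_{\sigma(n)}(p)\to c \in [-\infty,+\infty]$), I may assume that on each convergence component of $\Omega\sm\mathcal D$ the translated sequence $v_n = u_{\sigma(n)}-u_{\sigma(n)}(p)$ has a fixed behavior: either it converges uniformly on compact subsets to a minimal graph, or it diverges uniformly to $+\infty$, or to $-\infty$. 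The main case to treat is $c\in\R$; the unbounded cases are handled by symmetric adaptations (using Lemma~\ref{lemma:boundary-structure} to ensure compatibility on $C_i$).

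Since $v_n(p)=0$, the component $\Omega_0$ containing $p$ is of convergent type. Suppose for contradiction that $\mathcal D\neq\emptyset$. By Proposition~\ref{prop:disjoint-lines}, across any divergence line in $\partial\Omega_0$ the sequence $v_n$ diverges on the opposite side, so at least one of the unions $\Omega_\pm$ (of convergence components where $v_n\to\pm\infty$) is nonempty; assume $\Omega_+\neq\emptyset$. By Corollary~\ref{coro:components-are-polygons}, $\mathcal P := \partial\Omega_+$ is an inscribed $\mu$-polygon.

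The crucial step is to show that $\mathcal P$ contains only $A_i$ portions and divergence lines. A $C_i$ portion contained in $\partial\Omega_+$ would force a transition, within some convergence component $\Omega_\alpha\subset\Omega_+$, from values uniformly bounded near $C_i$ (by a barrier argument analogous to the proof of Proposition~\ref{prop:boundary-value}(a)) to $v_n\to+\infty$ on interior compact subsets; standard compactness and curvature estimates (Lemma~\ref{lem:curvature-estimate}) then produce an additional divergence line inside $\Omega_\alpha$, contradicting that $\Omega_\alpha$ is a single component of $\Omega\sm\mathcal D$. A $B_i$ portion in $\partial\Omega_+$ is ruled out analogously: the transition from $v_n|_{B_i}\to-\infty$ to $v_n\to+\infty$ in the interior would either force an interior divergence line or a family of divergence lines accumulating on $B_i$, and the latter would be isotopic to $B_i$, which Lemma~\ref{lemma:isotopy-boundary} forbids under the JS-conditions.

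Once $\mathcal P$ is reduced to $A_i$ portions and divergence lines, a flux-divergence computation on $\Omega_+$ (in which the fluxes along interior divergence lines between $\Omega_+$-components cancel in pairs) together with Lemma~\ref{lemma:isotopy-boundary} yields
\[0=\alpha(\mathcal P)-\Length_\mu(\mathcal D\cap\mathcal P),\]
the sign along each $L\subset\mathcal D\cap\mathcal P$ being negative because $\nabla v_n$ points into $\Omega_+$ (where $v_n$ is larger) opposite to the outer conormal. Therefore $\gamma(\mathcal P)=2\alpha(\mathcal P)$, which violates the Jenkins--Serrin condition $2\alpha(\mathcal P)<\gamma(\mathcal P)$; if $\cup C_i=\emptyset$, we use additionally that $\mathcal P\neq\partial\Omega$ because $p\notin\overline{\Omega_+}$. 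This contradiction forces $\mathcal D=\emptyset$, so $\Omega_0=\Omega$ and $\{v_n\}$ converges uniformly on compact subsets of $\Omega$ to a minimal solution. The main obstacle in this program is the rigorous verification that $\partial\Omega_+$ contains no $B_i$ or $C_i$ portions, as this requires careful barrier estimates combined with the non-existence of divergence lines isotopic to $A_i$ or $B_i$.
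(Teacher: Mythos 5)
Your overall strategy matches the paper's in outline (refine to pairwise disjoint divergence lines via Proposition~\ref{prop:disjoint-lines}, assume $\mathcal D\neq\emptyset$, and contradict the JS-conditions by a flux computation on an inscribed $\mu$-polygon made of divergence lines and boundary arcs), but the way you organize the contradiction -- flux over $\partial\Omega_+$, the boundary of the union of \emph{all} components where the $p$-normalized sequence diverges to $+\infty$ -- introduces gaps that the paper's route deliberately avoids. First, your exclusion of $B_i$-portions from $\partial\Omega_+$ is not justified: there is nothing contradictory about the convergence component adjacent to $B_i$ sitting at level $+\infty$ relative to $p$'s component, since the limit graph there, renormalized at one of its own points, simply takes the boundary value $-\infty$ on $B_i$; no interior divergence line and no accumulation at $B_i$ is forced. (Luckily this exclusion is also unnecessary: keeping the $-\beta(\mathcal P)$ term, the identity $\alpha-\beta-d=0$ still gives $\gamma(\mathcal P)=2\alpha(\mathcal P)$.) Second, and more seriously, the exclusion of $C_i$-portions -- which you correctly identify as the crux -- is only argued when $u_{\sigma(n)}(p)$ stays bounded, and even there the barrier argument borrowed from Proposition~\ref{prop:boundary-value}(a) fails at $\mu$-geodesic points of $C_i$: case (2) of that proof (the Douglas annulus) presupposes interior convergence near $C_i$, which is exactly what is in question. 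When $u_{\sigma(n)}(p)\to-\infty$ the renormalized values on $C_i$ tend to $+\infty$, the $C_i$-adjacent component can legitimately lie in $\Omega_+$, and the flux identity becomes $\alpha-\beta-d+S_C=0$ with $S_C$ only bounded by $\Length_\mu(C_i\cap\mathcal P)$ in absolute value, from which no violation of the JS-conditions follows; there is no ``symmetric adaptation'' here. Third, the claim that $\partial\Omega_+$ is an inscribed $\mu$-polygon needs an argument, since $\Omega_+$ may meet an isotopy region $R_{\mathcal I}$ in infinitely many components and Corollary~\ref{coro:components-are-polygons} covers only single components of $\Omega\sm\mathcal D$.

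The paper sidesteps all of this by never tying the divergence-line analysis to the basepoint $p$: Lemma~\ref{lemma:isotopy-boundary} (which uses the specific boundary data $\pm n$ and the monotonicity of $u_n\mp n$) guarantees a convergence component adjacent to some $B_i$ and pins down the limit fluxes along every $A_i$ and $B_i$; one then climbs from that component through divergence lines where the locally normalized sequence diverges to $+\infty$, the climb terminating by Corollary~\ref{coro:finite-convergence-components}; and the final flux computation is performed on the boundary of that single terminal component with its own normalization. If you want to keep your $\Omega_+$-based formulation, you must at minimum supply a complete treatment of the $C_i$-portions in the unbounded-normalization case, which is precisely what the paper's restructuring is designed to avoid.
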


\begin{proof}
Take a subsequence $\{u_{\sigma(n)}\}$ of the original subsequence having only disjoint divergence lines using Proposition~\ref{prop:disjoint-lines}, and assume by contradiction that the union of all divergence lines $\mathcal D$ is nonempty. Note that $\mathcal D\neq\Omega$ because, by Lemmas~\ref{lemma:closure-divlines} and~\ref{lemma:isotopy-boundary}, isotopy classes of $\mu$-geodesics are closed and $\mathcal D=\Omega$ would imply the existence of divergence lines isotopic to some $A_i$ or $B_i$. Therefore, $\Omega\sm\mathcal D\neq\emptyset$ and Lemma~\ref{lemma:isotopy-boundary} ensures the existence of a convergence component $\Omega_1\subset\Omega\sm\mathcal D$ whose boundary contains one of the sides $B_i$ (we can argue similarly if we assume that it contains one of the sides $A_i$); in particular, $\Omega_1$ is disjoint with any of the regions $R_{\mathcal I}$. Note that $\{u_{\sigma(n)}-u_{\sigma(n)}(p_1)\}$ converges uniformly on compact subsets of $\Omega_1$ for a fixed $p_1\in\Omega_1$ and $\partial\Omega_1$ is an inscribed $\mu$-polygon by Corollary~\ref{coro:components-are-polygons}. If there is a divergence line $L_1\subset\partial\Omega_1$ such that $\{u_{\sigma(n)}-u_{\sigma(n)}(p_1)\}$ diverges to $+\infty$ along $L_1$, then the normalized gradients $\eta_{\sigma(n)}$ converge to the outer conormal to $\Omega_1$ along $L_1$. We can define a new convergence component $\Omega_2$ as follows:
\begin{enumerate}
	\item If $L_1=L_-$ and $\Omega_1=\Omega_-$ for some isotopy class of divergence lines $\mathcal I$ with at least two elements, then define $\Omega_2=\Omega_+$ (with the notation of Proposition~\ref{prop:disjoint-lines}). Hence, by item (6) of that Proposition, $\{u_{\sigma(n)}-u_{\sigma(n)}(p_2)\}$ converges uniformly on compact subsets of $\Omega_2$ for any $p_2\in\Omega_2$ and diverges to $-\infty$ on $\Omega_1\cup\overline R_{\mathcal I}$.
	\item Otherwise, $L_1$ is unique in its isotopy class so there is an adjacent component $\Omega_2\subset\Omega\sm\mathcal D$ such that $L_1\subset\partial\Omega_1\cap\partial\Omega_2$. By the already discussed results in~\cite{MRR}, it follows that $\{u_{\sigma(n)}-u_{\sigma(n)}(p_2)\}$ converges uniformly to a minimal graph on compact subsets of $\Omega_2$ for any $p_2\in\Omega_2$ and diverges to $-\infty$ on $\Omega_1\cup L_1$.
\end{enumerate}
Either way, we found a component $\Omega_2$ at a higher level than $\Omega_1$. This process can be repeated to produce a sequence of convergence components $\Omega_1,\Omega_2,\ldots$ which are pairwise disjoint and disjoint with any of the regions $R_{\mathcal I}$. There is a finite number of such convergence components by Corollary~\ref{coro:finite-convergence-components}, so the aforesaid process must end after finitely many steps. This means that we can find a component $\Omega_k\subset\Omega\sm\mathcal D$ and $p_k\in\Omega_k$ such that $\{u_{\sigma(n)}-u_{\sigma(n)}(p_k)\}$ goes to $-\infty$ along all the divergence lines in $\mathcal P_k=\partial\Omega_k$. The flux of $u_{\sigma(n)}-u_{\sigma(n)}(p_k)$ along any $B_i\subset\mathcal P_k$ tends to $-\Length_\mu(B_i)$ by Lemma~\ref{lemma:isotopy-boundary}, so a computation similar to~\eqref{lemma:isotopy-boundary:eqn1} gives $\gamma(\mathcal P_k)-2\beta(\mathcal P_k)\leq 0$ in the limit as $n\to\infty$, which gives a contradiction with the JS-conditions.
\end{proof}

Now we have all ingredients to finish the proof of the main theorem.

\begin{proof}[Proof of the existence in Theorem~\ref{thm:JS}]
Fix $p\in\Omega$ and let $\{u_{\sigma(n)}-u_n(p)\}$ be the subsequence given by Lemma~\ref{lemma:convergence-JS}. We will assume first that $u_n(p)$ is bounded, so $\{u_{\sigma(n)}(p)\}\to a\in\R$ (up to a subsequence). This easily implies that $\{u_{\sigma(n)}-a\}$, or equivalently $\{u_{\sigma(n)}\}$, converges uniformly on compact subsets of $\Omega$ to a minimal graph $u$. Propositions~\ref{prop:boundary-value} and~\ref{prop:boundary-infinity} ensure that $u$ achieves the desired boundary values along the components of $\partial\Omega$.

Now suppose that $\{u_n(p)\}$ is unbounded and also that $\{u_{\sigma(n)}(p)\}\to+\infty$ up to considering a further subsequence (the case $\{u_{\sigma(n)}(p)\}\to-\infty$ follows similarly). Let $u$ be the limit of $\{u_{\sigma(n)}-u_{\sigma(n)}(p)\}$, which has the correct asymptotic value $-\infty$ on $\cup B_i$ by Proposition~\ref{prop:boundary-infinity}. Let $a_n=n-u_n(p)$ be the value that each graph $u_n-u_n(p)$ takes on $\cup A_i$, which is a sequence of positive numbers by the maximum principle ($F_0$ is a minimal section). Notice that we need that $a_n\to+\infty$ in order to get the desired solution. We will distinguish two cases:
\begin{enumerate}
 	\item If $\cup C_i\neq\emptyset$, then $u$ takes the value $-\infty$ on each $C_i$. If $\{a_{\sigma(n)}\}$ is bounded, we can pass to a subsequence such that $\{a_{\sigma(n)}\}\to a\in\R$ so that $u$ takes the constant value $a$ on $\cup A_i$; otherwise, we can pass a subsequence such that $\{a_{\sigma(n)}\}\to+\infty$ increasingly and $u$ takes the value $+\infty$ on $\cup A_i$. Either way, by computing the flux of $u$ across $\partial\Omega$ we get $2\alpha(\partial\Omega)\geq \gamma(\partial\Omega)$ which is not compatible with the JS-conditions.
 	\item If $\cup C_i=\emptyset$, then we apply a similar argument. If $\{a_{\sigma(n)}\}$ is not bounded, we get the desired solution with the correct boundary values. If $\{a_{\sigma(n)}\}$ is bounded, we find a graph $u$ in all $\Omega$ with constant value on each $A_i$ and $-\infty$ value on each $B_i$. This leads to $\alpha(\partial\Omega)>\beta(\partial\Omega)$, which is a contradiction.\qedhere
 \end{enumerate}
\end{proof}

\begin{remark}
Assume the JS-conditions hold. If $\cup C_i\neq\emptyset$, the above argument shows that any subsequence of $\{u_n\}$ has a further subsequence that converges uniformly on compact subsets of $\Omega$ to a solution of the Jenkins--Serrin problem. Since the solution is unique by Proposition~\ref{prop:uniqueness}, this easily implies that the original sequence $\{u_n\}$ given by~\eqref{eqn:sequence} converges itself to the solution. If $\cup C_i=\emptyset$, the same is true for $\{u_n-u_n(p)\}$ for any prescribed $p\in\Omega$ (no need of subsequences).
\end{remark}

\begin{remark}
Our approach also gives information if the JS-conditions do not hold or there are two adjacent arcs of type $A_i$ or $B_i$ by analysing the behavior of $\{u_n\}$. There are two possible scenarios:
\begin{enumerate}[label=(\alph*)]
	\item Every subsequence of $\{u_n\}$ has divergence lines. In particular, we can find a subsequence of $\{u_n\}$ where these divergence lines are disjoint and hence they are grouped in isotopy classes that behave as in Proposition~\ref{prop:disjoint-lines} and its corollaries (see Figure~\ref{fig:RI}). 
	\item There is a subsequence of $\{u_n\}$ without divergence lines, in which case we produce a minimal graph over all $\Omega$ with different boundary values. The rectangle of $\R^2$ in Figure~\ref{fig:culete} (left) cannot have divergence lines by symmetry and uniquness of solution, so $u_n(p)\to+\infty$ at any point $p$ of the rectangle. This means that $\{u_n-u_n(p)\}$ converges uniformly on compact subsets of the rectangle but we have performed an infinite translation downwards, so that the prescribed boundary values $0$ become $-\infty$ whilst the values $+\infty$ become $0$.
\end{enumerate}

We also point out that divergence lines cannot end at convex corners where two of the $C_i$ with finite values meet (we can use the small Scherk graphs as barriers at such a corner). However, there do exist examples in which divergence lines actually end on reentrant corners where two curves of type $C_i$ meet. The example in $\R^2$ given in Figure~\ref{fig:culete} (right) cannot converge after bounded or unbounded translations because the JS-conditions are not satisfied. It is not difficult to see that the divergence lines are those in dashed line that end at the concave vertex.
\end{remark}

\begin{figure}
\centering\includegraphics[width=0.7\textwidth]{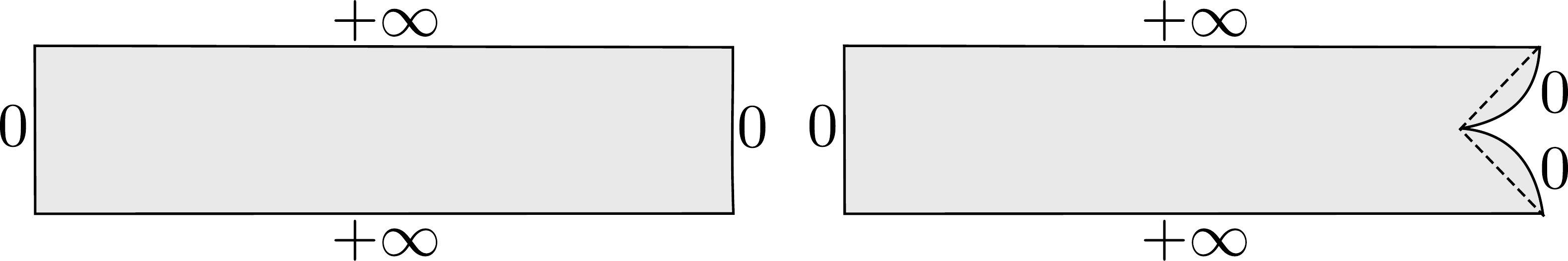}
\caption{On the left, an Jenkins--Serrin problem which has a solution if we change the boundary values. On the right, the dashed segments show up as divergence lines of the sequence $\{u_n\}$.}\label{fig:culete} 
\end{figure}

\section{Examples and further remarks}\label{sec:examples}

\subsection{New minimal surfaces in $\mathbb{R}^3$}

Let $\ell$ be the $z$-axis in $\R^3$, so that $\R^3\sm\ell$ can be seen as a Killing submersion with the Killing vector field $\xi=y\partial_x-x \partial_y$ generated by rotations about $\ell$. The affine planes of $\R^3$ containing $\ell$ are everywhere orthogonal to $\xi$, so the horizontal distribution associated to this Killing submersion is integrable. The metric in the orbit space $M=\{(x,z)\in\R^2:x>0\}$ that makes the projection $\pi:\R^3\sm\ell\to M$ Riemannian is the Euclidean one, and we also infer that $\tau(x,z)=0$ and $\mu(x,z)=x$ on $M$. The Killing submersion is completely determined in this way by also taking into account that $\R^3-\ell$ is not simply connected: it is the quotient by a vertical translation of the simply connected space $\E(M,\tau,\mu)$ that fibers over $M$ with bundle curvature $\tau$ and Killing length $\mu$. Recall that \emph{vertical} in $\E(M,\tau,\mu)$ is not the same as vertical in $\R^3$. Consequently, if $\Omega\subset M$ is an admissible Jenkins--Serrin domain, an eventual solution $\Sigma$ of the Jenkins--Serrin problem in $\E(M,\mu,\tau)$ that diverges in $\alpha\subset\partial\Omega$ will be embedded around $\alpha$ (since it is a Killing graph), but not properly embedded since it accumulates at $\pi^{-1}(\alpha)$. Also, the vertices of $\Omega$ always give rise to self-intersections of the boundary of the graph.

\begin{figure}
	\centering\includegraphics[width=0.9\textwidth]{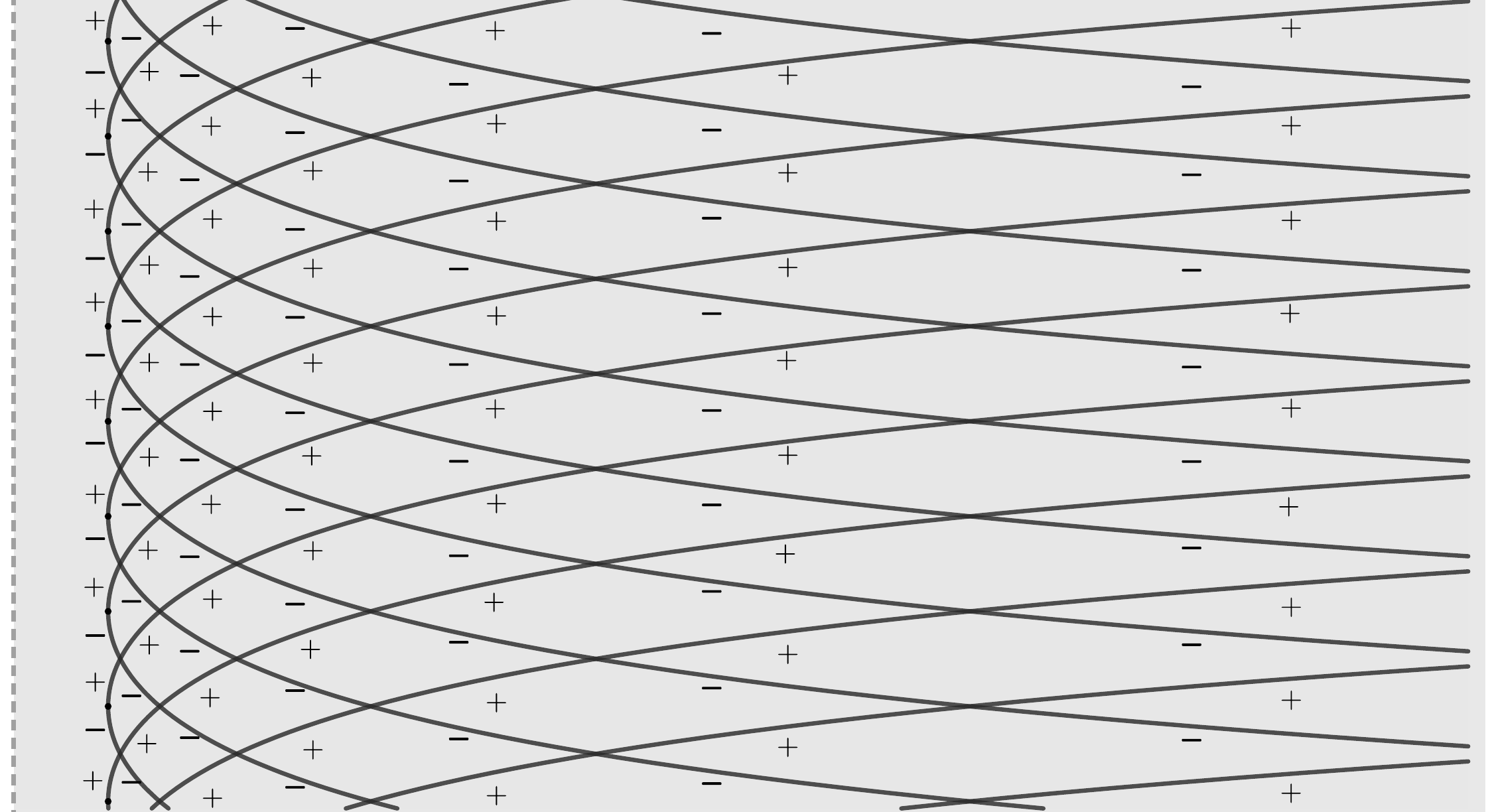}
	\caption{Tessellation of a vertical halfplane by catenaries that produce catenoids of $\R^3$ by rotation about the axis (in dashed line).}
	\label{TassellationM}
\end{figure}

Rotational minimal surfaces in $\R^3$ are catenoids and planes, from where we infer that $\mu$-geodesics in $M$ are catenaries (with respect to the $z$-axis) and straight lines (orthogonal to the $z$-axis). This gives the following $\mu$-geodesics depending on two parameters $a,b\in\R$:
\begin{itemize}
	\item $\alpha_a(t)=(t,a)$, which is defined for $t>0$ and hence noncomplete;
	\item $\beta_{a,b}(t)=(a\cosh(t/a),t+b)$ with $a>0$.
\end{itemize} 
For fixed $b\in\R$ and $a,c>0$, the curves $\left\{\beta_{a,b+kc}\right\}_{k\in\Z}$ produce a tiling of $M$ as shown in Figure~\ref{TassellationM}. Each $\mu$-geodesic $\beta_{a,b+kc}(t)$ is marked with the values $+\infty$ if $t>0$ and $-\infty$ if $t<0$. This produces a Jenkins--Serrin problem in each tile, which satisfies the JS-conditions since each tile is a $\mu$-quadrilateral symmetric with respect to the horizontal line passing through two of its vertices. The solution viewed in $\R^3\sm\ell$ is an embedded graph in the rotational direction that accumulates on closed subsets of four catenoids and whose boundary consists of four circumferences. 

It is natural to ask is if there exist values of $a,b,c$ such that the solutions on two tiles that are opposite by a vertex continue analytically each other. This is not trivial since there is no Schwarz reflection across circumferences of $\R^3$.

We also remark that constructions in the same spirit can be also done respect to screw motions in $\R^3$ by taking advantage of symmetric configuration of the $\mu$-geodesics. The same applies to the rest of $\E(\kappa,\tau)$-spaces.

\subsection{Scherk-like minimal surfaces in $\mathrm{Nil}_3$ and Berger spheres}

Consider the unit square $\Omega=(0,1)\times(0,1)\subset\R^2$ and assign values $\pm\infty$ to opposite sides of $\Omega$, as in the classical Scherk graph of $\R^3$. This gives rise trivially to a minimal graph in $\mathrm{Nil}_3$ over $\Omega$ with these assigned values. The resulting surface $\Sigma_0$ has boundary four vertical lines projecting to the vertices of $\Omega$, so it can be extended to a complete minimal surface by successive axial symmetries about its boundary components. 

The minimal set of such axial symmetries needed to go back to the tile $\Omega$ consists of the symmetries about $(0,0)$, $(0,-1)$, $(1,-1)$ and $(1,0)$, as shown in Figure~\ref{fig:wild-scherk}. The axial symmetry of $\mathrm{Nil}_3$ about the vertical axis $\{x=x_0,y=y_0\}$ reads
\[R_{(x_0,y_0)}(x,y,z)=(2x_0-x,2y_0-y,z+2\tau(y_0x-x_0y)),\]
and it follows that $R_{(1,0)}\circ R_{(1,-1)}\circ R_{(0,-1)}\circ R_{(0,0)}$ is the vertical translation $(x,y,z)\mapsto (x,y,z+4\tau)$. This means that on each shaded tile of the infinite chessboard, one finds infinitely many copies of $\Sigma_0$ evenly distributed at vertical distance $4\tau$ from the neighboring ones. Therefore, a Scherk-like surface in $\mathrm{Nil}_3$ is neither proper nor embedded.

\begin{figure}
\includegraphics[width=0.4\textwidth]{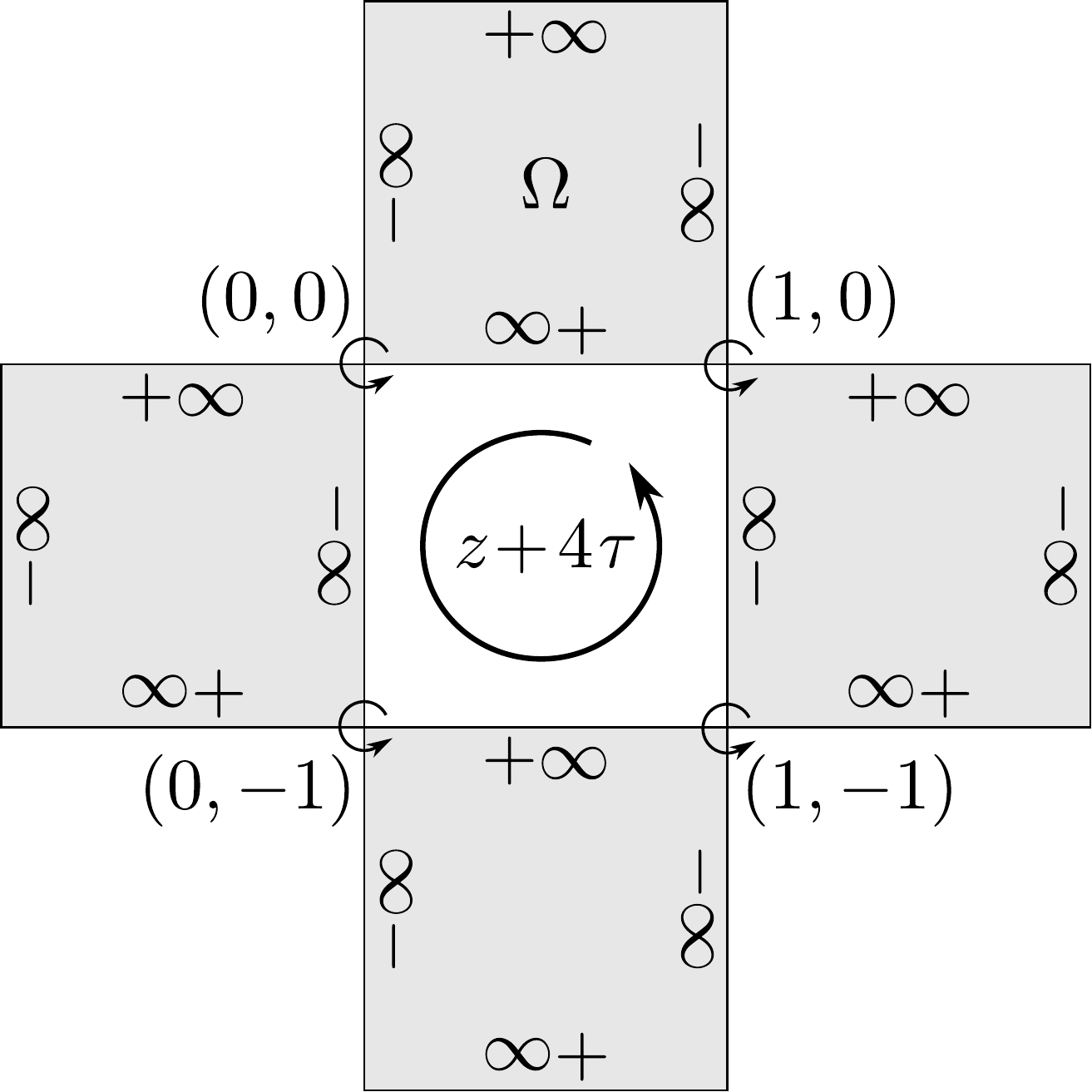}\qquad\includegraphics[width=0.4\textwidth]{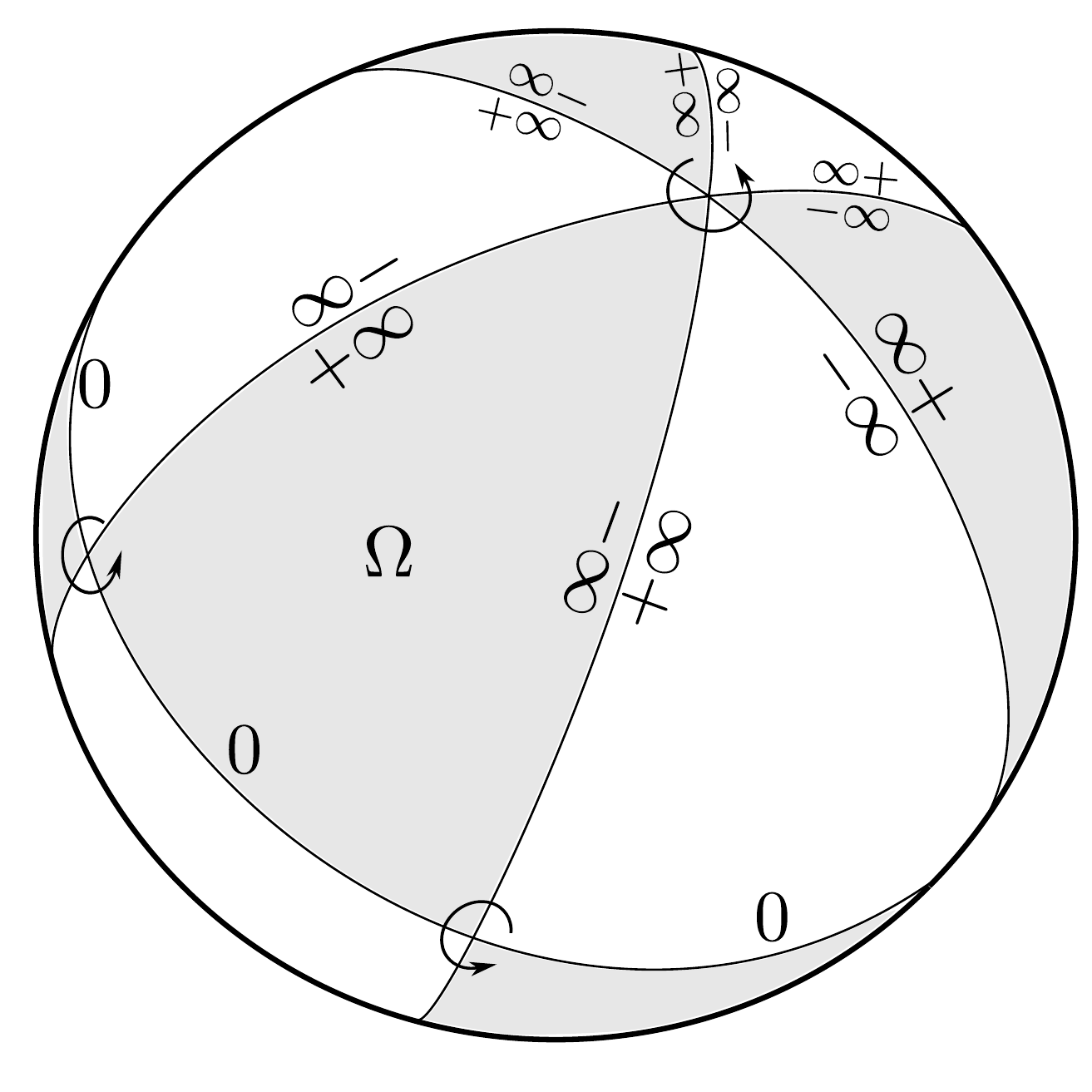}
\caption{Fundamental domains of a Scherk-like surface in $\mathrm{Nil}_3$ and the effect of the holonomy (left). Beach ball tessellation of $\mathbb{S}^2$ that leads to another complete surface in $\mathbb{S}^2\times\R$ or in Berger spheres. The values $0$ actually mean \emph{horizontal geodesic}.}\label{fig:wild-scherk}
\end{figure}

Similar constructions can be done in other $\E(\kappa,\tau)$-spaces by taking tessellations of $\mathbb{H}^2(\kappa)$ by regular $2m$-gons such that $2k$ of them meet at each vertex (such a tessellation exists if and only if $\frac{1}{m}+\frac{1}{k}\leq 1$). The above construction in $\mathrm{Nil}_3$ can be mimicked to get Scherk surfaces in $\mathbb{H}^2(\kappa)\times\mathbb{R}$ or $\widetilde{\mathrm{SL}}_2(\mathbb{R})$; in the latter case, we will find the same holonomy problem as in $\mathrm{Nil}_3$, so the resulting complete surface is invariant by a vertical translation and it is neither embedded nor proper. 

There is a special case which is worth mentioning, namely considering a \emph{beach ball} tessellation of $\mathbb{S}^2(\kappa)$ consisting of $2m$ sectors (or $2$-gons) whose sides are split in two arcs by adding the midpoints. Each sector becomes a quadrilateral in this way in which we can solve a Jenkins--Serrin problem (in $\mathbb{S}^2(\kappa)\times\R$ or in a Berger sphere) by prescribing alternating boundary values $\pm\infty$, see Figure~\ref{fig:wild-scherk} (right). The solution $\Sigma_0$ exists if $m\geq 2$ and has an additional axial symmetry that has been marked as zero in the Figure (this equator spans a horizontal geodesic $\Gamma\subset\Sigma_0$).
\begin{itemize}
	\item If $\tau=0$, then $\Sigma_0$ is completed by successive axial symmetries about the vertical geodesics projecting to $\Gamma$, since this process also provides an extension of $\Sigma_0$ beyond the geodesics projecting to the poles. All in all, we obtain a complete surface that consists of $4m$ copies of $\Sigma$, each of them projecting to one of the triangles (shaded or not) in Figure~\ref{fig:wild-scherk}. This surface is properly immersed in $\mathbb{S}^2(\kappa)\times\R$ with $2m$ annular ends asymptotic to vertical planes, since it takes $+\infty$ (resp.\ $-\infty$) values along $m$ great circles. 
	\item If $\tau\neq 0$, then the same ideas still apply, though we need $8m$ copies of $\Sigma_0$. The holonomy makes the horizontal geodesic $\Gamma$ project two-to-one to a great circle of $\mathbb{S}^2(\kappa)$, so that the complete surface projects two-to-one onto the interior of each of the triangles in Figure~\ref{fig:wild-scherk} and we have $4m$ annular ends.
\end{itemize}
There are other possible configurations that lead to interesting minimal surfaces consisting of finitely many isometric copies of a solution to a Jenkins--Serrin problem. It is likely that all these surfaces have finite total curvature in $\mathbb{S}^2(\kappa)\times\R$ or in a Berger sphere but this is an open question.

\subsection{Some topological observations}

In the above examples, we have seen that the condition that fibers have infinite length is not actually necessary for practical purposes, since one can work in a universal cover and then pass to the quotient. There are other three scenarios that is worth mentioning.

First, it is not necessary that the domain $\Omega\subset M$ is embedded. Assume that $\Omega'\subset M'$ is a relatively compact domain on some simply connected Riemannian surface $M'$ and let $\psi:M'\to M$ be an isometric immersion. Then we can consider the Killing submersion $\pi':\E'\to M'$ with bundle curvature and Killing length the pullback of $\tau$ and $\mu$ by, respectively. Since $\psi$ lifts to an isometric immersion $\Psi:\E'\to\E$ such that $\pi\circ\Psi=\psi\circ\pi'$, the solution of a Jenkins--Serrin problem over $\Omega'$ can mapped by $\Psi$ to a solution of a Jenkins--Serrin problem over the (possibly not embedded) domain $\psi(\Omega)\subset M$.

Second, an extremal case in our Jenkins--Serrin problem is the construction of minimal annuli over annular domains bounded by two closed geodesics in $M$. For instance, in Figure~\ref{fig:unduloid}, we have a rotational unduloid $M$, where we assume that $\mu\equiv 1$ and $\tau$ is arbitrary. Also, $A_1,A_2,B_1,B_2,B_3$ are closed embedded $\mu$-geodesics corresponding to maximal or minimal radii. In the following problems, we prescribe $+\infty$ (resp.\ $-\infty$) values in the components $A_i$ (resp.\ $B_i$) when they lie in the boundary of the domains under consideration.
\begin{itemize}
	\item In the domain bounded by $A_1$ and $B_1$, the Jenkins--Serrin problem has solution, because any possible closed simple $\mu$-geodesic has $\mu$-length larger than the (common) $\mu$-length of $A_1$ and $B_1$ (they minimize lengths in their isotopy class).
	\item In the domain bounded by $A_2$ and $B_2$, there is no solution because the inscribed polygon $A_2\cup B_1$ does not satisfy the JS-conditions.
	\item In the domain bounded by $A_1$ and $B_3$ there is no solution either, because the inscribed polygon $A_1\cup B_1$ does not satisfy the JS-conditions. 
\end{itemize}

\begin{figure}
\includegraphics[width=0.75\textwidth]{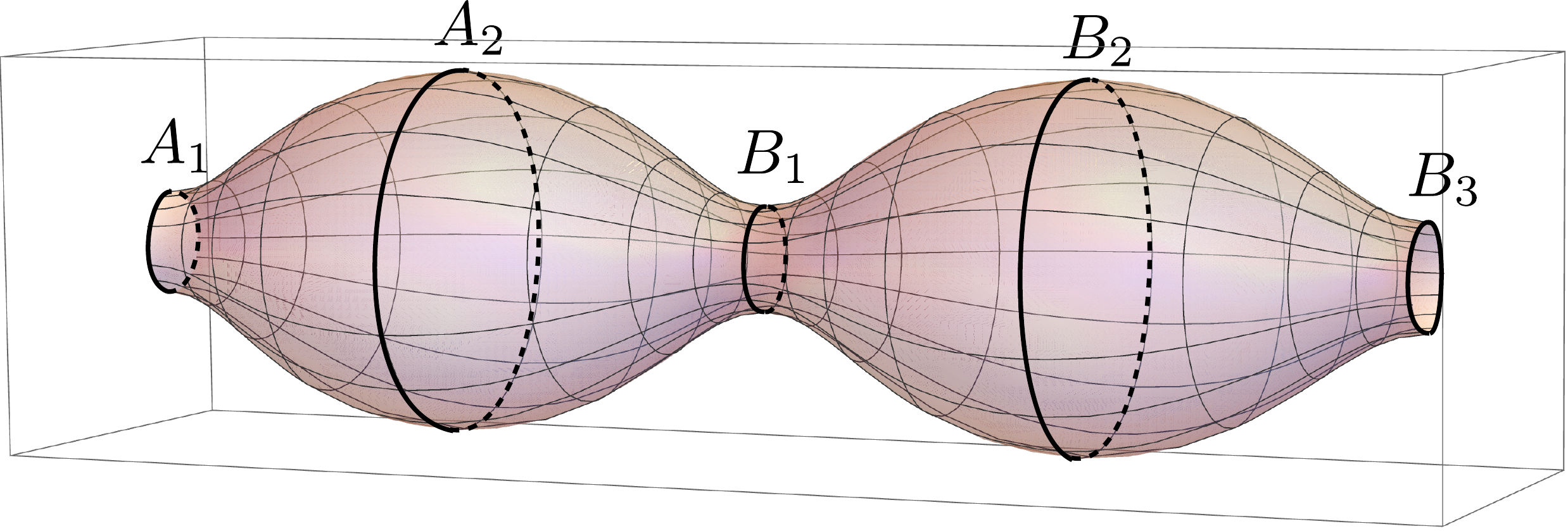}
\caption{Unduloid-like domains for Jenkins--Serrin problems.}\label{fig:unduloid}
\end{figure}

Third and last, we would like to point out some issue related to the condition (C2) in the Introduction, which is assumed in $M\times\R$ in order to adapt the original ideas by Jenkins and Serrin (see the Fourth case in the proof of~\cite[Thm.~3.3]{MRR}). Note also that this assumption does not give any trouble in $\mathbb{H}^2\times\R$ so it does not affect other results in~\cite{MRR}. In the case $\cup C_i=\emptyset$, take the sequence $v_n$ with values $0$ at the $A_i$ and $n$ at the $B_i$, and define the sets $E_c=\{p\in\Omega:v_n(p)>c\}$ and $F_c=\{p\in\Omega:v_n(p)<c\}$, which are disconnected when $c$ or $n-c$ are close enough to zero by condition (C2). The classical approach defines $\mu_n$ as the infimum of $c\in (0,n)$ such that $F_c$ is connected and claims that $E_{\mu_n}$ and $F_{\mu_n}$ are both disconnected. To see that this is not true in general, consider a sphere in which add four necks with boundary geodesics $A_1,A_2,B_1,B_2$ disposed symmetrically, as shown in Figure~\ref{fig:counterexample}. By uniqueness, the solution of the Jenkins--Serrin problem given by Theorem~\ref{thm:JS} has a symmetry with respect to a horizontal geodesic. Note that a similar example can be produced by removing four small polygons (with reentrant corners) in the round sphere $\mathbb{S}^2$.

\begin{figure}
\includegraphics[width=0.4\textwidth]{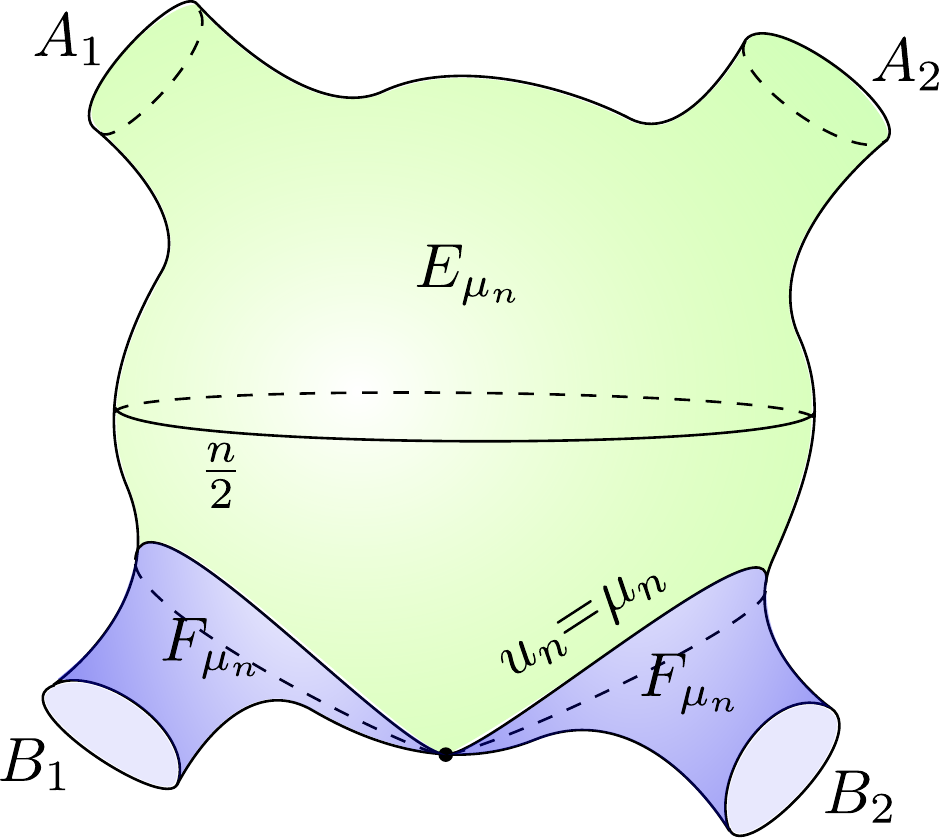}
\caption{The set $E_{\mu_n}$ (green) is connected.}\label{fig:counterexample}
\end{figure}

The aforesaid symmetry implies that $v_n$ has value $\frac{n}{2}$ along the symmetry curve (as shown in the figure), so it takes values larger (resp.\ smaller) than $\frac{n}{2}$ on the upper (resp.\ lower) half of the surface. At the first instant that the purple set $F_c$ gets disconnected, the green set $E_c$ is still connected.


\begin{thebibliography}{00}
		
	\bibitem{ADR} L.\ Alías, M.\ Dajczer, J.\ Ripoll.
	\newblock A Bernstein-type theorem for Riemannian manifolds with a Killing field.
	\newblock \emph{Ann.\ Glol.\ Anal.\ Geom.} \textbf{31} (2007), no.\ 4, 363--373.

	\bibitem{Arettines} C.\ Arettines.
	\newblock The geometry and combinatorics of closed geodesics on hyperbolic surfaces.
	\newblock Thesis (Ph.D.)--City University of New York. 2015. 94 pp. ISBN: 978-1339-14836-6.
		
	\bibitem{Aro} N.\ Aronszajn. 
	\newblock A unique continuation theorem for solutions of elliptic partial differential equations or inequalities of second order. 
	\newblock \emph{J.\ Math.\ Pures Appl.} \textbf{36} (1957), no.\ 9, 235--249.
		
	\bibitem{Ber} L.\ Bers.
	\newblock Local Behavior of Solutions of General Linear Elliptic Equations. 
	\newblock \emph{Comm.\ Pure Appl.\ Math.} \textbf{8} (1955), 473--496.
				
	\bibitem{CR} P.\ Collin, H.\ Rosenberg.
	\newblock Construction of harmonic diffeomorphisms and minimal graphs.
	\newblock \emph{Ann.\ of Math.\ (2)} \textbf{172} (2010), no.\ 3, 1879--1906.
		
	\bibitem{DD} M.\ Dajczer, J.\ H.\ de Lira.
	\newblock Killing graphs with prescribed mean curvature and Riemannian submersions.
	\newblock \emph{Ann. Inst. Henri Poicar\'e Anal. Non Lin\'eaire}, \textbf{26} (2009), no.\ 3, 763--775.
			
	\bibitem{DDR} M.\ Dajczer, J.\ H.\ de Lira, J.\ Ripoll.
	\newblock An interior gradient estimate for the mean curvature equation of Killing graphs and applications.
	\newblock \emph{Jour.\ d'Analyse Math.}, \textbf{129} (2016), 91–103.
		
	\bibitem{DLM} A.\ Del Prete, H.\ Lee, J.\ M.\ Manzano.
	\newblock A duality for prescribed mean curvature graphs in Riemannian and Lorentzian Killing submersions.
	\newblock \emph{Preprint} (2023) available at \href{https://arxiv.org/abs/2306.00562}{arXiv:2306.00562}.
		
	\bibitem{EM} M.\ Eichmair, J.\ Metzger.
	\newblock Jenkins--Serrin-type results for the Jang equation.
	\newblock \emph{J.\ Differential Geom.}, \textbf{102} (2016), 207--242.

	\bibitem{F}	R.\ Finn.
	\newblock Remarks relevant to minimal surfaces, and to surfaces of prescribed mean curvature.
	\newblock \emph{Jour.\ d'Analyse Math.}, \textbf{14} (1965), no.\ 1, 139--160.
		
	\bibitem{GT} D.\ Gilbarg, N.\ S.\ Trudinger.
	\newblock Elliptic Partial Differential Equations of Second Order.
	\newblock \emph{Springer-Verlag}, \textbf{2001}. ISBN: 978-354041160-7
		
	\bibitem{HRS} L.\ Hauswirth, H.\ Rosenberg, J.\ Spruck.
	\newblock On complete mean curvature $\frac{1}{2}$ surfaces in $\mathbb{H}^2\times\R$.
	\newblock \emph{Comm.\ Anal.\ Geom.}, \textbf{16} (2008), no.\ 5, 989--1005.
			
	\bibitem{J} J.\ Jost.
	\newblock Conformal mapping and the Plateau-Douglas problem in Riemannian manifolds.
	\newblock \emph{J.\ Reine Angew.\ Math.}, \textbf{359} (1985), 37--54.

	\bibitem{JS2} H.\ Jenkins, J.\ Serrin.
	\newblock Variational problems of minimal surface type I. 
	\newblock \emph{Arch.\ Rat.\ Mech.\ Anal.} \textbf{12} (1963), 185-212.

	\bibitem{JS} H.\ Jenkins, J.\ Serrin.
	\newblock Variational problems of minimal surface type II. Boundary value problems for the minimal surface equation.
	\newblock \emph{Arch.\ Rat.\ Mech.\ Anal.} \textbf{21} (1966), 321--342.
		
	\bibitem{KM} P.\ Klaser, A.\ Menezes.
	\newblock The Dirichlet problem for the constant mean curvature equation in $\mathrm{Sol}_3$.
	\newblock \emph{Illinois J.\ Math.} \text{63} (2019), no.\ 2, 259--297 

	\bibitem{LU} O.\ A.\ Ladyzhenskaya, N.\ N.\ Ural'tseva.
	\newblock Estimate of the Holder norm of the solutions of second-order quasilinear elliptic equations of the general form
	\newblock\emph{J.\ Soviet Math.}, \textbf{21} (1983), 762--768.
		
	\bibitem{LerMan} A.\ M.\ Lerma, J.\ M.\ Manzano.
	\newblock Compact stable surfaces with constant mean curvature in Killing submersions.
	\newblock \emph{Ann.\ Mat.\ Pura Appl.\ (4)}  \textbf{196}, no.\ 4 (2017), 1345--1364.
		
	\bibitem{LR} C.\ Leandro, H.\ Rosenberg.
	\newblock Removable singularities for sections of Riemannian submersions of prescribed mean curvature.
	\newblock\emph{Bull.\ Sci.\ Math.}, \textbf{133} (2009), no.\ 4, 445--452.
			
	\bibitem{Mazet} L.\ Mazet.
	\newblock Construction de surfaces minimales par résolution du problème de Dirichlet.
	\newblock PhD Thesis, Université de Toulouse III -- Paul Sabatier, 2004.
		
	\bibitem{MRR} L.\ Mazet, M.\ Rodriguez, H.\ Rosenberg.
	\newblock The Dirichlet problem for the minimal surface equation, with possible infinite boundary data, over domains in a Riemannian surface.
	\newblock \emph{Proc.\ London Math.\ Soc.} \textbf{102} (2011), 985--1023.
				
	\bibitem{MMP} W.\ H.\ Meeks, P.\ Mira, J.\ P\'{e}rez.
	\newblock The geometry of stable minimal surfaces in metric Lie groups.
	\newblock \emph{Trans.\ Amer.\ Math.\ Soc.} \textbf{372} (2019), no.\ 2, 1023--1056.

	\bibitem{MeeksPerezRos}	W.\ H.\ Meeks, J.\ P\'{e}rez, A.\ Ros.
	\newblock Stable constant mean curvature surfaces.
	\newblock In \emph{Handbook of Geometric Analysis}, \textbf{1} (2008). Editors: Lizhen Ji, Peter Li, Richard Schoen, Leon Simon. International Press. ISBN: 978-1-57146-130-8.
		
	\bibitem{MY1} W.\ H.\ Meeks, S.-T.\ Yau.
	\newblock The Existence of Embedded Minimal Surfaces and the Problem of Uniqueness.
	\newblock \emph{Math.\ Z.} \textbf{179} (1982), 151--168.
		
	\bibitem{NR} B.\ Nelli, H.\ Rosenberg.
	\newblock Minimal surfaces in $\mathbb{H}^2\times\R$.
	\newblock \emph{Bull.\ Braz.\ Math.\ Soc.} \textbf{33} (2002), no.\ 2, 263--292.
		
	\bibitem{NR-err} 
	B.\ Nelli, H.\ Rosenberg.
	\newblock  Errata Minimal Surfaces in $\mathbb{H}^2\times\mathbb{R}$ [Bull. Braz. Math. Soc., New Series 33(2002), 263-292] 
	\newblock \emph{Bull.\ Braz.\ Math.\ Soc.} \textbf{38} (2007), no.\ 4.

	\bibitem{NST} B.\ Nelli, R.\ Sa Earp, E.\ Toubiana.
	\newblock Minimal graphs in $\mathrm{Nil}_3$:\ existence and non-existence results. 
	\newblock \emph{Calc.\ Var.} \textbf{56} (2017), no.\ 2, Paper No.\ 27, 21 pp.
		
	\bibitem{Ng} M.\ Nguyen. 
	\newblock The Dirichlet problem for the minimal surface equation in Sol3, with possible infinite boundary data.
	\newblock \emph{Illinois J.\ Math.} \textbf{58} (2014), no.\ 4, 891--937. 

	\bibitem{PerezRos} J.\ Pérez, A.\ Ros.
	\newblock Properly embedded minimal surfaces with finite total curvature.
	\newblock The global theory of minimal surfaces in flat spaces (Martina Franca, 1999), 15--66, \emph{Lecture Notes in Math.} \textbf{1775}, Fond.\ CIME/CIME Found.\ Subser., Springer, Berlin, 2002.
		
	\bibitem{Pin} A.\ L.\ Pinheiro.
	\newblock A Jenkins--Serrin theorem in $M^2\times\R$.
	\newblock \emph{Bull.\ Braz.\ Math.\ Soc.} {\bf 40}(1), 117--148 (2007).
		
		
	\bibitem{RST} H.\ Rosenberg, R.\ Souam, E.\ Toubiana.
	\newblock General curvature estimates for stable H-surfaces in 3-manifolds and applications.
	\newblock \emph{J.\ Differential Geom.} \textbf{84} (2010), no.\ 3, 623--648.
				
	\bibitem{Sp} J.\ Spruck.
	\newblock  Infinite Boundary Value Problems for Surfaces of Constant Mean Curvature.
	\newblock \emph{Arch.\ Rational Mech.\ Anal.} \textbf{49} (1972), 1--31.
				
	\bibitem{ST1} R.\ Sa Earp, E.\ Toubiana.
	\newblock  Existence and uniqueness of minimal graphs in hyperbolic space.
	\newblock \emph{Asian J.\ Math.} \textbf{4} (2000), no.\ 3, 669--694.
		
	\bibitem{ST2} R.\ Sa Earp, E.\ Toubiana.
	\newblock Minimal graphs in $\mathbb{H}^n\times\R$ and $\R^{n+1}$.
	\newblock \emph{Ann.\ Inst.\ Fourier} \textbf{60} (2010).
		
	\bibitem{ST3} R.\ Sa Earp, E.\ Toubiana.
	\newblock An asymptotic theorem for minimal surfaces and existence results for minimal graphs in $\mathbb{H}^2\times\R$.
	\newblock \emph{Math.\ Ann.} \textbf{342} (2008) 309--331.
	
	\bibitem{ST4} R.\ Sa Earp, E.\ Toubiana.
	\newblock Classical Schwarz reflection principle for Jenkins–Serrin type minimal surfaces.
	\newblock \emph{Ann.\ Glob.\ Anal.\ Geom.} \textbf{57} (2020), 365--379.

	\bibitem{Schoen} R.\ Schoen.
	\newblock Estimates for stable minimal surfaces in three-dimensional manifolds.
	\newblock Seminar on minimal submanifolds, 111–126, \emph{Ann.\ of Math.\ Stud.} \textbf{103}, Princeton Univ. Press, Princeton, NJ, 1983.

	\bibitem{Spruck} J.\ Spruck.
	\newblock Infinite boundary value problems for surfaces of constant mean curvature.
	\newblock \emph{Arch.\ Rat.\ Mech.\ Anal.} \textbf{49} (1972/73), 1--31.
	
	\bibitem{Steenrod} N.\ Steenrod.
	\newblock The topology of fibre bundles.
	\newblock In \emph{Princeton Mathematical Series}, vol.\ 14. Princeton University Press, Princeton, NJ (1951). ISBN: 978-069100548-5.

	\bibitem{Younes} R.\ Younes.
	\newblock Minimal surfaces in $\widetilde{\mathrm{PSL}}_2(\R)$.
	\newblock \emph{Illinois J.\ Math.} \textbf{54} (2010) 671--712.
		
\end{thebibliography}
\end{document}